\newcommand{\C}{\mathbb {C}}
\newcommand{\cZ}{{\mathcal Z}}
\newcommand{\bl}{\boldsymbol{\mathfrak{e}_\ell}}
\newcommand{\br}{\boldsymbol{\mathfrak{e}_r}}
\newcommand{\bp}{\boldsymbol{\rho}}
\newcommand{\sbm}[1]{\left[\begin{smallmatrix} #1
                \end{smallmatrix}\right]}
\newcommand{\bm}{\mathbf {m}}
\newcommand{\cbm}[1]{\left(\begin{smallmatrix} #1
                \end{smallmatrix}\right)}
\newcommand{\cX}{\mathcal {X}}
\newcommand{\X}{\mathcal X_{_{[\alpha]}}}
\newtheorem{Pa}{Paper}[section]
\newtheorem{theorem}[Pa]{{\bf Theorem}}
\newtheorem{lemma}[Pa]{{\bf Lemma}}
\newtheorem{definition}[Pa]{{\bf Definition}}
\newtheorem{corollary}[Pa]{{\bf Corollary}}
\newtheorem{remark}[Pa]{{\bf Remark}}
\newtheorem{proposition}[Pa]{{\bf Proposition}}
\newtheorem{example}[Pa]{{\bf Example}}
\author[V. Bolotnikov]{Vladimir Bolotnikov}
\address{Department of Mathematics, College of William and Mary, Williamsburg, VA 23187-8795, USA}
\title[Lagrange interpolation]{Lagrange interpolation  
over division rings}
\begin{document}

\begin{abstract}
For a division ring $\mathbb F$, the polynomials $f\in\mathbb F$ can
be evaluated ``on the left" and ``on the right" giving rise to left and right
Lagrange interpolation problems. The problems containig interpolation conditions of the same 
type were considered in \cite{lam1} where the solvability criterion was given in terms 
of polynomial independence of interpolation nodes. We establish the solvability criterion 
and describe all solutions of low degree (less than the number of interpolation conditions imposed)
for the problem containing  both ``left" and ``right" conditions. 
\end{abstract}
\keywords{Lagrange interpolation, polynomial independence, Sylvester equation}
\subjclass{12E15, 30C10, 41A05}
\maketitle 

\section{Introduction}
\setcounter{equation}{0}
Given distinct nodes  $\alpha_1,\ldots,\alpha_n$ and target values $c_1,\ldots,c_n$ in a field $\mathbb F$, 
the classical Lagrange interpolation problem consists of finding a polynomial $f\in\mathbb F[z]$ such that
\begin{equation}
f(\alpha_i)=c_i\quad\mbox{for}\quad i=1,\ldots,n.
\label{1.1}
\end{equation}
If we consider $P_{n}(\mathbb F):=\{g\in\mathbb F[z]: \, \deg g< n\}$ and $\mathbb F^n$ as $n$-dimensional vector spaces over
$\mathbb F$, then the linear operator $T: \, P_{n}(\mathbb F)\to\mathbb F^n$ 
defined by $Tf=(f(\alpha_1),\ldots,f(\alpha_n))$ is injective, since a nonzero $f\in\mathbb F[z]$ 
cannot have more zeros in $\mathbb F$ than $\deg f$. Since $\dim P_{n}(\mathbb F)=\dim\mathbb F^n$, 
$T$ is also surjective, which leads us to the following observation.
\begin{remark} 
Given any distinct $\alpha_1,\ldots,\alpha_n$ and any $c_1,\ldots,c_n$ in a field $\mathbb F$,
there is a unique polynomial $f_L\in P_{n}(\mathbb F)$ subject to conditions \eqref{1.1}.
\label{R:1.0}
\end{remark}
The explicit formula for that unique $f_L$ (the {\em Lagrange interpolation formula}) 
\begin{equation}
f_L(z)=\sum_{i=1}^n \frac{p_i(z)c_i}{p_i(\alpha_i)},\quad \mbox{where}\quad p_i(z)=\prod_{j\neq i}(z-\alpha_j)
\quad\mbox{for}\quad i=1,\ldots,n,
\label{1.2}
\end{equation}
is easily verified as the $i$-th term $f_i(z)=\frac{p_i(z)c_i}{p_i(\alpha_i)}\in P_{n}(\mathbb F)$
satisfies conditions $f_i(\alpha_i)=c_i$ and  $f_i(\alpha_j)=0$ for all $j\neq i$.

\smallskip

We now turn to Lagrange interpolation over a {\em division ring} $\mathbb F$, which is different from the 
commutative case in two regards. First, left and right evaluation functionals $f\mapsto f^{\bl}(\alpha)$ and 
$f\mapsto f^{\br}(\alpha)$ on $\mathbb F[z]$ (see formulas \eqref{2.2} below)
give rise to two different (left and right) Lagrange interpolation problems: {\em given sets
\begin{equation}
\Lambda=\{\alpha_1,\ldots,\alpha_n\}\quad\mbox{and}\quad
 \Omega=\{\beta_1,\ldots,\beta_k\}
\label{1.7}
\end{equation}
of distinct elements in $\mathbb F$ along with given target values $c_1,\ldots,c_n$ and
$d_1,\ldots,d_k$ in $\mathbb F$, find a polynomial
$f\in\mathbb F[z]$ subject to left or right interpolation conditions
\begin{align}
f^{\bl}(\alpha_i)&=c_i\quad\mbox{for}\quad i=1,\ldots,n,\label{1.18}\\
f^{\br}(\beta_j)&=d_j\quad\mbox{for}\quad j=1,\ldots,k.\label{1.19}
\end{align}}
Evaluation functionals \eqref{2.2} also give rise to non-equivalent notions of left and right zeros;
consequently the solution sets of homogeneous problems \eqref{1.18} and \eqref{1.19} are respectively, 
the right ideal generated by the left minimal polynomial of $\Lambda$ and the left ideal generated by 
right minimal polynomial of $\Omega$.

\smallskip

Another distinction with the commutative case 
was indicated in \cite{gm}: as any polynomial $f\in\mathbb F[z]$ having two distinct left (right) zeros in the 
same conjugacy class of $\mathbb F$, actually has infinitely many zeros in this class, the target values 
cannot be assigned arbitrarily (at least a'priori) at more than two points within the same conjugacy class. 
The latter has been clarified in \cite{lam1} by introducing the notion of left (right) {\em polynomial 
independency} ($P$-independency; see Definition \ref{D:1.1}). Loosely speaking, a finite set $\Lambda\subset \mathbb F$ 
contains a maximal $P$-independent subset $\Lambda_0$, and the left values of each polynomial $f$ on 
$\Lambda_0$ uniquely determine $f^{\bl}$ on the whole $\Lambda$. This leads to consistency conditions 
for target values which either indicate that the problem \eqref{1.18} 
has no solutions, or allow us to disregard the interpolation conditions on  $\Lambda\backslash\Lambda_0$, hence making the 
problem \eqref{1.18} with left $P$-independent interpolation nodes a generic one. Similar observations hold true for
the right Lagrange problem \eqref{1.19}. The results concerning Lagrange problems with $P$-independent interpolation nodes
are the same (up to minor noncommutative adjustments) as in the commutative case. This material is briefly recalled in Section 2
in the form suitable for the subsequent analysis.

\smallskip

The main purpose of the present paper is to study the two-sided Lagrange problem that contains {\em both} 
left and right interpolation conditions \eqref{1.18}, \eqref{1.19}. We do not assume that the sets \eqref{1.7}
are disjoint, so left and right target values can be assigned to the same interpolation node. Without loss of generality,
we will assume that the sets $\Lambda$ and $\Omega$ in \eqref{1.7} are respectively left and right $P$-independent,
so that the left and right subproblems are consistent. Still, the combined problem may be inconsistent, 
and on the other hand, it may admit many low-degree solutions. In Section 3, we present the solvability criterion for 
the two-sided problem \eqref{1.18}, \eqref{1.19} to have a solution, establish a parametrization formula 
(which is fairly explicit under the assumption that the interpolation nodes are algebraic over the center of $\mathbb F$) 
producing all low-degree solutions. Two-sided polynomial independence and the two-sided Lagrange interpolation formula 
are also discussed in Section 3.

\section{Background}
\setcounter{equation}{0}

In what follows, $\mathbb F$ is assumed to be a {\em division ring} with the center $Z_{\mathbb F}$, and for  
each $\alpha\in\mathbb F$, we let $[\alpha]:=\{h\alpha h^{-1}: \, h\in\mathbb F\backslash\{0\}\}$ to denote its conjugacy class. 

\smallskip

We let $\mathbb F[z]$ to denote the ring of polynomials in one formal variable $z$ which commutes with 
coefficients from $\mathbb F$. Since the division algorithm holds in $\mathbb F[z]$ on
either side, any ideal (left or right) in $\mathbb F[z]$ is principal. We will write $\langle p\rangle_{\bf r}$
and $\langle p\rangle_{\boldsymbol\ell}$ for the right and the left ideal generated by $p\in\mathbb F[z]$
dropping the subscript if the ideal is two-sided. 
Any two-sided ideal of $\mathbb F[z]$ is generated by a polynomial with coefficients in $Z_\mathbb F$
(see e.g., \cite[Proposition 2.2.2]{cohn3}); the converse is clear since $Z_{\mathbb F[z]}=Z_{\mathbb F}[z]$.

\smallskip

The intersection of two left (right) ideals is a left (right)
ideal; the {\em least right} and {\em left common multiples} ${\bf lrcm}(f,g)$ and ${\bf llcm}(f,g)$
of two monic polynomials $f,g\in\mathbb F[z]$ are defined as generators of the respective ideals
\begin{equation}
\langle f\rangle_{\bf r}\cap\langle g\rangle_{\bf r}=
\langle{\bf lrcm}(f,g)\rangle_{\bf r}\quad\mbox{and}\quad
\langle f\rangle_{\boldsymbol\ell}\cap\langle g\rangle_{\boldsymbol\ell}=
\langle{\bf llcm}(f,g)\rangle_{\boldsymbol\ell}.
\label{2.1}
\end{equation}
\subsection{Evaluation functionals} 
Left and right evaluations of an $f\in\mathbb F[z]$ at $\alpha\in\mathbb F$ can be defined as 
the remainders of $f$ when divided by $\bp_\alpha(z)=z-\alpha$ on the left and on the 
right, respectively. As is easily verified, for any $\alpha\in\mathbb F$ and $f\in\mathbb F[z]$,
\begin{equation}
f(z)=f^{\bl}(\alpha)+\bp_\alpha\cdot(L_\alpha f)=f^{\br}(\alpha)+(R_\alpha f)(z)\cdot\bp_{\alpha}\qquad
(\bp_\alpha(z):=z-\alpha),
\label{2.1u}
\end{equation}
where $f^{\bl}(\alpha)$ and $f^{\br}(\alpha)$ are left and right evaluations of $f$ at $\alpha$:
\begin{equation}
f^{\bl}(\alpha)=\sum_{j=0}^m\alpha^j f_j\quad\mbox{and}\quad
f^{\br}(\alpha)=\sum_{j=0}^m f_j\alpha^j\quad\mbox{if}\quad f(z)=\sum_{j=0}^m z^j f_j,
\label{2.2}
\end{equation}
and where $L_\alpha f$ and $R_\alpha f$ are the polynomials given by
\begin{equation}
(L_\alpha f)(z)=\sum_{i+j=0}^{m-1}\alpha^if_{i+j+1}z^j,\quad
(R_\alpha f)(z)=\sum_{i+j=0}^{m-1}z^j f_{i+j+1}\alpha^i .
\label{2.3}
\end{equation}
\begin{remark}
{\rm The quantities introduced in \eqref{2.2} and \eqref{2.3} are related as follows: 
\begin{align}
(L_\alpha f)^{\br}(\beta)=
\sum_{k=0}^{\deg f-1}\sum_{j=0}^k \alpha^kf_{k+j+1}\beta^{k-j}&=(R_\beta f)^{\bl}(\alpha),\label{4.3}\\
\alpha \cdot(L_\alpha f)^{\br}(\beta)-(L_\alpha f)^{\br}(\beta)\cdot\beta&=f^{\bl}(\alpha)-f^{\br}(\beta),
\label{4.4}
\end{align}
for  any $\alpha,\beta\in\mathbb F$ and $f\in\mathbb F[z]$.
Indeed, equalities \eqref{4.3} are immediate from \eqref{2.3}, whhereas applying the right evaluation at $z=\beta$ to the first equality in 
\eqref{2.1u} gives
$$
f^{\br}(\beta)=f^{\bl}(\alpha)+(L_\alpha f)^{\br}(\beta)\cdot\beta-\alpha \cdot(L_\alpha f)^{\br}(\beta)
$$
which is equivalent to \eqref{4.4}.}
\label{R:4.4a}
\end{remark}
We next recall the product formulas for evaluations \eqref{2.2}. From the definitions \eqref{2.2}, one can see that
for any $f,g\in\mathbb F[z]$ and $\alpha\in\mathbb F$,
\begin{align}
(gf)^{\bl}(\alpha)&=\sum \alpha^kg^{\bl}(\alpha)f_k=\big(g^{\bl}(\alpha)\cdot f\big)^{\bl}(\alpha),\label{2.3a}\\
(gf)^{\br}(\alpha)&=\sum g_k f^{\br}(\alpha)\alpha^k=\big(g\cdot f^{\br}(\alpha)\big)^{\br}(\alpha),\label{2.3b}
\end{align}
which imply 
\begin{align}
(gf)^{\bl}(\alpha)&=\left\{\begin{array}{ccc}
g^{\bl}(\alpha)\cdot f^{\bl}\left(g^{\bl}(\alpha)^{-1}\alpha
g^{\bl}(\alpha)\right)&\mbox{if} &
g^{\bl}(\alpha)\neq 0, \\
0 & \mbox{if} & g^{\bl}(\alpha)= 0,\end{array}\right.
\label{2.6}\\
(gf)^{\br}(\alpha)&=\left\{\begin{array}{ccc} g^{\br}\left(f^{\br}(\alpha)\alpha
f^{\br}(\alpha)^{-1}\right)\cdot
f^{\br}(\alpha)&\mbox{if} & f^{\br}(\alpha)\neq 0, \\
0 & \mbox{if} & f^{\br}(\alpha)= 0.\end{array}\right.
\label{2.7}
\end{align}
Indeed, the top formula in \eqref{2.6} follows from \eqref{2.3a} and the computation
$$
\sum \alpha^kg^{\bl}(\alpha)f_k=g^{\bl}(\alpha)\sum (g^{\bl}(\alpha)^{-1}\alpha g^{\bl}(\alpha))^kf_k
=g^{\bl}(\alpha)\cdot f^{\bl}\left(g^{\bl}(\alpha)^{-1}\alpha
g^{\bl}(\alpha)\right).
$$
The top formula in \eqref{2.7} is justified similarly, while the bottom formulas in \eqref{2.6}, \eqref{2.7}
follow immediately from \eqref{2.3a}, \eqref{2.3b}. 
\begin{proposition}
For any $\alpha\in\mathbb F$ and $f,g\in\mathbb F[z]$,
\begin{equation}
L_\alpha(gf)=\left\{\begin{array}{lcc}
(L_\alpha g)\cdot f+g^{\bl}(\alpha)\cdot L_{\widetilde{\alpha}}f, &\mbox{if}& g^{\bl}(\alpha)\neq 0,\\
(L_\alpha g)\cdot f,&\mbox{if}& g^{\bl}(\alpha)= 0,\end{array}\right.
\label{Lprod}
\end{equation}
\label{P:lprod}
where $L_\alpha$ is defined as in \eqref{2.3} and where $\widetilde{\alpha}:=g^{\bl}(\alpha)^{-1}\alpha g^{\bl}(\alpha)$.
\end{proposition}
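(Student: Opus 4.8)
The plan is to combine the defining identity \eqref{2.1u} with the product rule \eqref{2.3a} and the uniqueness of the remainder under left division by the monic polynomial $\bp_\alpha$. First I would isolate the special case of a \emph{constant} left factor: if $c\in\mathbb F\setminus\{0\}$ and $\widetilde{\alpha}:=c^{-1}\alpha c$, then directly from the formula \eqref{2.3} for $L_\alpha$ one reads off $L_\alpha(cf)=c\cdot L_{\widetilde{\alpha}}f$, since $\alpha^i c=c\,\widetilde{\alpha}^{\,i}$ lets us pull $c$ to the front of every monomial; the very same manipulation in \eqref{2.2} gives $(cf)^{\bl}(\alpha)=c\cdot f^{\bl}(\widetilde{\alpha})$, in agreement with \eqref{2.6}. (Equivalently, this is nothing but the assertion of the Proposition in the trivial case $\deg g=0$.)

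Next I would start from \eqref{2.1u} written for $g$, namely $g=g^{\bl}(\alpha)+\bp_\alpha\cdot(L_\alpha g)$, and multiply on the right by $f$ to get $gf=g^{\bl}(\alpha)\cdot f+\bp_\alpha\cdot(L_\alpha g)\cdot f$. If $g^{\bl}(\alpha)=0$, this already displays $gf$ as $\bp_\alpha$ times the polynomial $(L_\alpha g)\cdot f$ with constant term $0$ left over, so uniqueness of the decomposition \eqref{2.1u} forces $L_\alpha(gf)=(L_\alpha g)\cdot f$, which is the second line. If $c:=g^{\bl}(\alpha)\neq 0$, I would apply the constant-factor identity from the previous paragraph to the first summand, writing $c\,f=c\,f^{\bl}(\widetilde{\alpha})+\bp_\alpha\cdot c\,L_{\widetilde{\alpha}}f$, and substitute, obtaining
$$gf=c\,f^{\bl}(\widetilde{\alpha})+\bp_\alpha\cdot\big[(L_\alpha g)\cdot f+c\,L_{\widetilde{\alpha}}f\big].$$
Since $c\,f^{\bl}(\widetilde{\alpha})\in\mathbb F$ is a constant, comparing this with \eqref{2.1u} applied to $gf$ and invoking uniqueness of the quotient and remainder in left division by $\bp_\alpha$ yields simultaneously $(gf)^{\bl}(\alpha)=g^{\bl}(\alpha)f^{\bl}(\widetilde{\alpha})$ (so \eqref{2.6} is recovered as a byproduct) and the desired first line $L_\alpha(gf)=(L_\alpha g)\cdot f+g^{\bl}(\alpha)\cdot L_{\widetilde{\alpha}}f$.

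There is no genuine obstacle here. The only points that require care are the bookkeeping of the conjugation $\widetilde{\alpha}=g^{\bl}(\alpha)^{-1}\alpha g^{\bl}(\alpha)$ — which is forced on us precisely because the scalar $g^{\bl}(\alpha)$ has to be commuted past the powers of $\alpha$ occurring inside $L_\alpha$ and inside the left evaluation — and the appeal to uniqueness in the left division algorithm in $\mathbb F[z]$, which is exactly what legitimizes reading off $L_\alpha(gf)$ from \emph{any} representation of the form $gf=(\text{constant})+\bp_\alpha\cdot(\text{polynomial})$. An alternative, purely computational route would be to expand both sides of \eqref{Lprod} using \eqref{2.3} and the product formula \eqref{2.3a}, but the division-with-remainder argument above avoids the double summations and makes the role of the conjugation transparent.
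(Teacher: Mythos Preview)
Your proof is correct and follows essentially the same approach as the paper: both multiply the identity $g=g^{\bl}(\alpha)+\bp_\alpha\cdot(L_\alpha g)$ on the right by $f$, handle the constant summand $g^{\bl}(\alpha)\cdot f$ via the conjugation $\widetilde{\alpha}$, and read off $L_\alpha(gf)$ by uniqueness of left division by $\bp_\alpha$. The only organizational difference is that you isolate the constant-factor identity $L_\alpha(cf)=c\,L_{\widetilde{\alpha}}f$ as a preliminary step (recovering \eqref{2.6} along the way), whereas the paper invokes \eqref{2.6} directly and uses the equivalent relation $\bp_\alpha\,g^{\bl}(\alpha)=g^{\bl}(\alpha)\,\bp_{\widetilde{\alpha}}$ to commute the scalar past $\bp_\alpha$.
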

\begin{proof}
If $g^{\bl}(\alpha)=0$, then $gf=\bp_\alpha (L_\alpha g)\cdot f$, which proves the bottom formula in \eqref{Lprod}.
If $g^{\bl}(\alpha)\neq 0$, we define $\widetilde{\alpha}$ as above and observe that 
$\bp_{\alpha}g^{\bl}(\alpha)= g^{\bl}(\alpha)\bp_{\widetilde{\alpha}}$. Now we have, on account of \eqref{2.6},
\begin{align*}
\bp_{\alpha}\cdot L_\alpha(gf)=gf-(gf)^{\bl}(\alpha)&=(g-g^{\bl}(\alpha))\cdot f+
g^{\bl}(\alpha)(f-f^{\bl}(\widetilde{\alpha}))\\
&=\bp_{\alpha}\cdot (L_\alpha g)\cdot f+g^{\bl}(\alpha)\bp_{\widetilde{\alpha}}\cdot (L_{\widetilde{\alpha}}f)\\
&=\bp_{\alpha}\cdot (L_\alpha g)\cdot f+\bp_{\alpha}\cdot (L_{\widetilde{\alpha}}f),
\end{align*}
which completes the proof of \eqref{Lprod}.
\end{proof}
\subsection{Polynomial independence}
An element $\alpha\in\mathbb F$ is called a {\em left (right) zero} of $f\in\mathbb F[z]$ if
$f^{\bl}(\alpha)=0$ (respectively, $f^{\br}(\alpha)=0$). We will denote by $\cZ_{\boldsymbol\ell}(f)$ 
and $\cZ_{{\boldsymbol r}}(f)$ the respective sets of left and right zeros of $f$ 
and observe from \eqref{2.1u} that 
\begin{equation}
\alpha\in\cZ_{\boldsymbol\ell}(f) \; \; \Longleftrightarrow \; f\in\langle \bp_\alpha\rangle_{\bf r}
\quad\mbox{and}\quad  \alpha\in\cZ_{\boldsymbol r}(f) \; \; \Longleftrightarrow \; 
f\in \langle \bp_\alpha\rangle_{\boldsymbol\ell}.
\label{2.4}
\end{equation}
More generally, given an  algebraic set $\Delta\subset\mathbb F$, the polynomials
\begin{equation}
P_{\Delta,{\boldsymbol\ell}}={\bf lrcm}\left(\bp_{\alpha}: \, \alpha\in\Delta\right)\quad\mbox{and}
\quad P_{\Delta,{\bf r}}={\bf llcm}\left(\bp_{\alpha}: \, \alpha\in\Delta\right)
\label{minpol}
\end{equation}
generate the ideals $\langle P_{\Delta,{\boldsymbol\ell}}\rangle_{\bf r}$ and 
$\langle P_{\Delta,{\bf r}}\rangle_{\boldsymbol\ell}$ consisting of polynomials $f\in\mathbb F[z]$ such that 
$f^{\bl}\vert_\Delta=0$ and $f^{\br}\vert_\Delta=0$, respectively:
\begin{equation}
\Delta\subseteq\cZ_{\boldsymbol\ell}(f) \; \; \Longleftrightarrow \; 
f\in\langle P_{\Delta,{\boldsymbol\ell}}\rangle_{\bf r}
\quad\mbox{and}\quad  \Delta\subseteq\cZ_{\boldsymbol r}(f) \; \; 
\Longleftrightarrow \; f\in \langle P_{\Delta,{\bf r}}\rangle_{\boldsymbol\ell}.
\label{2.4u}
\end{equation}
The polynomials $P_{\Delta,{\boldsymbol\ell}}$ and $P_{\Delta,{\bf r}}$ are called {\em left} and 
{\em right minimal polynomials} of $\Delta$. In particular, it follows from \eqref{2.4u} that 
\begin{equation}
\Delta\subseteq\cZ_{\boldsymbol\ell}(P_{\Delta,{\boldsymbol\ell}})
\quad\mbox{and}\quad
\Delta\subseteq \cZ_{\bf r}(P_{\Delta,{\bf r}});
\label{minpol1}
\end{equation}
both inclusions can be proper, by Gordon-Motzkin theorem \cite{gm}. 
It is clear from \eqref{minpol1} that the numbers $\deg P_{\Delta,{\boldsymbol\ell}}$ and 
$\deg P_{\Delta,{\bf r}}$ cannot exceed the cardinality of $\Delta$.
\begin{definition}
{\rm A set $\Delta\subset\mathbb F$ is called} left polynomially independent {\rm if 
$\deg P_{\Delta,{\boldsymbol\ell}}=|\Delta|$, and it is called} 
right polynomially independent {\rm if $\deg P_{\Delta,{\bf r}}=|\Delta|$}.
\label{D:1.1}
\end{definition}
The notion of polynomial independence ($P$-independence) was introduced in \cite{lam1}; see also 
\cite{lamler1}, \cite{lamler2} for later elaborations. On account of \eqref{minpol1}, the equality 
$\deg P_{\Delta,{\boldsymbol\ell}}=|\Delta|$ means that the polynomials 
$\left(\bp_{\alpha}: \, \alpha\in\Delta\right)$ are left relatively prime, i.e., each one (say, $\bp_\beta$) 
is left coprime with the {\bf lrcm} of the others, i.e., with the left minimal polynomial 
$P_{\Delta\backslash\{\beta\},{\boldsymbol\ell}}$ of the set $\Delta\backslash\{\beta\}$. Since $\beta$ is 
the only zero of $\bp_\beta$, the latter simply means that 
$P_{\Delta\backslash\{\beta\},{\boldsymbol\ell}}^{\bl}(\beta)\neq 0$. We record this observation along 
with its right counter-part.
\begin{remark}
An algebraic set $\Delta\subset\mathbb F$ is left (right) $P$-independent if and only if
\begin{equation}
P_{\Delta\backslash\{\beta\},{\boldsymbol\ell}}^{\bl}(\beta)\neq 0\qquad (\mbox{respectively, \; } 
P_{\Delta\backslash\{\beta\},{\bf r}}^{\br}(\beta)\neq 0) 
\quad\mbox{for all}\quad \beta\in\Delta.
\label{pind}
\end{equation}
\label{R:1.1r}
\end{remark}
The following theorem characterizes $P$-independent sets in interpolation terms and provides two 
(left and right) noncommutative counter-parts of Remark \ref{R:1.0}.
\begin{theorem}
(1) A set $\Lambda=\{\alpha_1,\ldots,\alpha_n\}\subset\mathbb F$ is left $P$-independent if and only if
the left problem \eqref{1.18} has a solution in $P_n(\mathbb F)$ for any $c_1,\ldots, c_n\in\mathbb F$.
In this case, a unique $f_{\boldsymbol\ell}\in P_n(\mathbb F)$ subject to conditions \eqref{1.18} is given by the formula
\begin{equation}
f_{\boldsymbol\ell}(z)=\sum_{i=1}^n p_i(z)p_i^{\bl}(\alpha_i)^{-1}c_i,\; \; \mbox{where}\; p_i=P_{\Lambda\backslash\{\alpha_i\},{\boldsymbol\ell}}:=
{\bf lrcm}\big(\bp_{\alpha_j}: \, j\neq i\big).
\label{1.16}
\end{equation}
(2) A set $\Omega=\{\beta_1,\ldots,\beta_k\}\subset \mathbb F$ is right $P$-independent if and only if
the right problem \eqref{1.19} has a solution in $P_k(\mathbb F)$ for any $d_1,\ldots, d_k\in\mathbb F$.
In this case, a unique $f_{\bf r}\in P_k(\mathbb F)$ subject to conditions \eqref{1.19} is given by
\begin{equation}
f_{\bf r}(z)=\sum_{i=1}^k d_i q_i^{\br}(\beta_i)^{-1}q_i(z),\; \;\mbox{where}\; \; q_i=P_{\Lambda\backslash\{\beta_i\},{\bf r}}
:={\bf llcm}\big(\bp_{\beta_j}: \, j\neq i\big).
\label{1.16r}
\end{equation}
\label{T:1.1}
\end{theorem}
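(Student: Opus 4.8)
The plan is to prove the two parts by a dimension-count argument mirroring the commutative case in Remark~\ref{R:1.0}, supplemented by the explicit formulas. First I would set up the linear map $T_{\boldsymbol\ell}\colon P_n(\mathbb F)\to\mathbb F^n$ given by $T_{\boldsymbol\ell}f=(f^{\bl}(\alpha_1),\ldots,f^{\bl}(\alpha_n))$, viewing both sides as left $\mathbb F$-vector spaces (note $f\mapsto f^{\bl}(\alpha)$ is additive but \emph{not} $\mathbb F$-linear from the left, so one must be careful: the correct module structure is the \emph{right} one, since $(fc)^{\bl}(\alpha)=f^{\bl}(\alpha)c$ for $c\in\mathbb F$ by \eqref{2.2}). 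So I would regard $P_n(\mathbb F)$ and $\mathbb F^n$ as right $\mathbb F$-vector spaces of dimension $n$; then $T_{\boldsymbol\ell}$ is right $\mathbb F$-linear. The kernel of $T_{\boldsymbol\ell}$ is $\{f\in P_n(\mathbb F):\Lambda\subseteq\cZ_{\boldsymbol\ell}(f)\}=P_n(\mathbb F)\cap\langle P_{\Lambda,{\boldsymbol\ell}}\rangle_{\bf r}$ by \eqref{2.4u}, which is $\{0\}$ precisely when $\deg P_{\Lambda,{\boldsymbol\ell}}\ge n$, i.e. (since $\deg P_{\Lambda,{\boldsymbol\ell}}\le|\Lambda|=n$ always) precisely when $\Lambda$ is left $P$-independent. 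By equality of dimensions, injectivity is equivalent to surjectivity, which gives the stated equivalence: solvability in $P_n(\mathbb F)$ for all data $\iff$ left $P$-independence, with uniqueness being exactly injectivity.

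Next, for the explicit formula \eqref{1.16} I would verify directly that each summand $f_i(z):=p_i(z)\,p_i^{\bl}(\alpha_i)^{-1}c_i$ lies in $P_n(\mathbb F)$ and satisfies $f_i^{\bl}(\alpha_i)=c_i$, $f_i^{\bl}(\alpha_j)=0$ for $j\neq i$. The degree bound is clear since $p_i=P_{\Lambda\setminus\{\alpha_i\},{\boldsymbol\ell}}$ has degree $n-1$ (using that $\Lambda$, hence $\Lambda\setminus\{\alpha_i\}$, is left $P$-independent, so $\deg p_i=|\Lambda\setminus\{\alpha_i\}|=n-1$). For $j\neq i$: $\alpha_j\in\cZ_{\boldsymbol\ell}(p_i)$ by \eqref{minpol1}, so by the product rule \eqref{2.3a}, $f_i^{\bl}(\alpha_j)=(p_i\cdot p_i^{\bl}(\alpha_i)^{-1}c_i)^{\bl}(\alpha_j)$; here I would invoke the bottom case of \eqref{2.6} with $g=p_i$ (left value $0$ at $\alpha_j$) to get $f_i^{\bl}(\alpha_j)=0$. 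For $j=i$: by \eqref{2.3a}, $f_i^{\bl}(\alpha_i)=(p_i\cdot p_i^{\bl}(\alpha_i)^{-1}c_i)^{\bl}(\alpha_i)=p_i^{\bl}(\alpha_i)\cdot p_i^{\bl}(\alpha_i)^{-1}c_i=c_i$, where $p_i^{\bl}(\alpha_i)\neq0$ by Remark~\ref{R:1.1r} applied to the left $P$-independent set $\Lambda$ (this is exactly condition \eqref{pind}). Summing over $i$ and using additivity of $f\mapsto f^{\bl}(\alpha_j)$ gives $f_{\boldsymbol\ell}^{\bl}(\alpha_j)=c_j$ for all $j$; uniqueness in $P_n(\mathbb F)$ is the injectivity already established.

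Part (2) is the mirror image: one works with $\mathbb F^k$ and $P_k(\mathbb F)$ as \emph{left} $\mathbb F$-vector spaces, since $f\mapsto f^{\br}(\beta)$ satisfies $(cf)^{\br}(\beta)=c\,f^{\br}(\beta)$, and runs the same kernel/dimension argument with $\langle P_{\Omega,{\bf r}}\rangle_{\boldsymbol\ell}$ in place of $\langle P_{\Lambda,{\boldsymbol\ell}}\rangle_{\bf r}$, using the right-hand statements of \eqref{2.4u}, \eqref{minpol1}, \eqref{pind} and the product rule \eqref{2.3b}, \eqref{2.7}. The formula \eqref{1.16r} is checked by the same bookkeeping with sides reversed: $q_i^{\br}(\beta_j)=0$ for $j\ne i$ kills the cross terms via the bottom case of \eqref{2.7}, and $d_i q_i^{\br}(\beta_i)^{-1}q_i^{\br}(\beta_i)=d_i$ handles the diagonal term, with $q_i^{\br}(\beta_i)\ne0$ guaranteed by the right version of \eqref{pind}.

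\textbf{Main obstacle.} The only real subtlety, and the step I would be most careful about, is the noncommutative linear-algebra bookkeeping: getting the left/right module structures right so that "injective $\iff$ surjective" is legitimate (a right-$\mathbb F$-linear endomorphism-type statement between equidimensional right vector spaces), and correctly applying the one-sided product formulas \eqref{2.6}–\eqref{2.7} — in particular remembering that the bottom (vanishing) cases are what make the off-diagonal terms vanish, so that no conjugation of the nodes is needed anywhere in the verification. Everything else is routine once the dimension of $P_{\Lambda\setminus\{\alpha_i\},{\boldsymbol\ell}}$ is pinned down via $P$-independence and Remark~\ref{R:1.1r} is invoked for the nonvanishing of $p_i^{\bl}(\alpha_i)$.
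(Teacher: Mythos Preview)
Your proposal is correct and follows essentially the same approach as the paper: both arguments view $P_n(\mathbb F)$ and $\mathbb F^n$ as right $\mathbb F$-modules, use the dimension count to equate injectivity and surjectivity of the left-evaluation map, identify the kernel via \eqref{2.4u} with the degree of $P_{\Lambda,\boldsymbol\ell}$, and then invoke Remark~\ref{R:1.1r} to justify formula~\eqref{1.16}. Your write-up is slightly more explicit (noting why the module structure must be the right one, and spelling out the use of \eqref{2.6} for the off-diagonal vanishing), but the substance is the same.
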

\begin{proof}
To argue as in the commutative case, we consider $P_n(\mathbb F)$ and $\mathbb F^n$ as right $\mathbb F$-modules over $\mathbb F$ and
define the right-linear operator $T: \, P_n(\mathbb F)\to \mathbb F^n$ by the formula
$Tf=(f^{\bl}(\alpha_1),\ldots,f^{\bl}(\alpha_n))$. Since $\dim_{_{\mathbb F}}P_n(\mathbb F)=\dim_{_{\mathbb F}}\mathbb F^n$, this operator is surjective
(i.e., the problem \eqref{1.18} has a solution in $P_n(\mathbb F)$ for any $c_1,\ldots, c_n\in\mathbb F$)
if and only if it is injective, i.e., {\em no nonzero polynomial of degree less than $n$ left-vanishes at $\Lambda$}.
The latter means that the left minimal polynomial $P_{\Lambda,{\boldsymbol\ell}}$ of $\Delta$ is of degree at least $n$, which means that
the set $\Lambda$ is left $P$-independent.

\smallskip

Conversely, if $\Lambda$ is left $P$-independent, then $P_{\Lambda\backslash\{\alpha_i\},{\boldsymbol\ell}}^{\bl}(\alpha_i)\neq 0$ for all $i=1,\ldots,n$, by Remark
\ref{R:1.1r}. Then the formula \eqref{1.16} makes sense and defines a polynomial $f_\ell\in P_n(\mathbb F)$ satisfying conditions
\eqref{1.18}. The uniqueness follows since the operator $T$ is injective. This completes the proof of part (1) of the theorem.
The proof of part (2) is similar once we consider
$P_k(\mathbb F)$ and $\mathbb F^k$ as left $\mathbb F$-modules over $\mathbb F$ and deal with the left linear operator
$T: \, P_k(\mathbb F)\to \mathbb F^k$ given by $Tf=(f^{\br}(\beta_1),\ldots,f^{\br}(\beta_k))$.
\end{proof}
\subsection{Consistency of interpolation conditions} By \eqref{2.4u}, the solution sets of homogeneous problems \eqref{1.18} and \eqref{1.19} are the ideals 
$\langle P_{\Lambda,{\boldsymbol\ell}}\rangle_{\bf r}$ and $\langle P_{\Omega,{\bf r}}\rangle_{\boldsymbol\ell}$. Combining the latter
Theorem \ref{T:1.1} leads us to the following conclusion.
\begin{remark}
If the sets $\Lambda$ and $\Omega$ in \eqref{1.7} are respectively, left and right $P$-independent, then 
all polynomials $f\in\mathbb F[z]$ satisfying conditions \eqref{1.18} and \eqref{1.19} are parametrized by respective formulas
\begin{equation}
f=f_{\boldsymbol\ell}+P_{\Lambda,{\boldsymbol\ell}}h\quad \mbox{and}\quad f=f_{\bf r}+gP_{\Omega,{\bf r}},\quad h,g\in\mathbb F[z]
\label{ap16}
\end{equation}
where $f_{\boldsymbol\ell}$ and $f_{\bf r}$ are defined in \eqref{1.16} and \eqref{1.16r}.
\label{R:1.3}
\end{remark}
Let us now consider the left interpolation problem  
\begin{equation}
f^{\bl}(\alpha_i)=c_i\quad\mbox{for}\quad i=1,\ldots,N
\label{ap1}
\end{equation}
where the set $\Delta=\{\alpha_1,\ldots,\alpha_N\}$ is not necessarily left $P$-independent. 
If $\deg P_{\Delta,{\boldsymbol\ell}}=n<N$, we can find a left $P$-independent subset $\Lambda\subset\Delta$ consisting of exactly $n$ elements
(a {\em left $P$-basis} of $\Delta$) and having the same left minimal polynomial as $\Delta$, that is, $P_{\Lambda,{\boldsymbol\ell}}=P_{\Delta,{\boldsymbol\ell}}$.
Without loss of generality we may let $\Lambda=\{\alpha_1,\ldots, \alpha_n\}$.

\smallskip

By Remark {R:1.3}, any polynomial $f$ satisfying conditions \eqref{ap1} (for $i=1,\ldots,n$) is of the form $f=f_{\boldsymbol\ell}+P_{\Delta,{\boldsymbol\ell}}h$
for some $h\in\mathbb F[z]$ and $f_{\boldsymbol\ell}$ given in \eqref{1.16}. Therefore
\begin{equation}
f^{\bl}(\gamma)=f_{\boldsymbol\ell}^{\bl}(\gamma)\quad \mbox{for all}\quad \gamma\in
\cZ_{\boldsymbol\ell}(P_{\Delta,{\boldsymbol\ell}}).
\label{ap20}
\end{equation}
By \eqref{minpol1}, we have in particular,  
$f^{\bl}(\alpha_j)=f_{\boldsymbol\ell}^{\bl}(\alpha_j)$ for $j=1,\ldots,N$.
Combining the latter equalities with \eqref{ap1} (for $j>n)$ and the formula \eqref{1.16} for $f_{\boldsymbol\ell}$, we get
\begin{equation}
\sum_{i=1}^n p_i^{\bl}(\alpha_j)p_i^{\bl}(\alpha_i)^{-1}c_i=c_j\quad\mbox{for}\quad j=n+1,\ldots,N.
\label{ap2}
\end{equation}
Thus, if the problem \eqref{ap1} is solvable, then the Lagrange polynomial $f_{\boldsymbol\ell}$ is a solution. For this to happen, the 
target value $c_j$ (for $j>n$) has to be equal to the actual value $f_{\boldsymbol\ell}^{\bl}(\alpha_j)$. If 
at least one of the equalities \eqref{ap2} fails, then the problem \eqref{ap1} is inconsistent. Otherwise, any polynomial
$f\in\mathbb F[z]$ satisfying the first $n$ conditions in \eqref{ap1} will satisfy the remaining conditions
automatically. After removing the redundant conditions we get a reduced interpolation problem based on the
left $P$-independent set $\Lambda$ and with the same solution set as the original problem. 

\smallskip

The same observations apply to the right-sided problem: a set $\Delta=\{\beta_1,\ldots, \beta_M\}$ with the right minimal 
polynomial  $P_{\Delta,{\bf r}}$ of degree $k$, can be rearranged so that its subset $\Omega=\{\beta_1,\ldots,\beta_k\}$ 
be right $P$-independent. Then the right interpolation problem 
\begin{equation}
f^{\br}(\beta_i)=d_i\quad\mbox{for}\quad i=1,\ldots,M
\label{ap3}
\end{equation}
has a solution if and only if the following compatibility conditions are satisfied
\begin{equation}
\sum_{i=1}^n d_iq_i^{\br}(\beta_i)^{-1}q_i^{\br}(\beta_j)=d_j\quad\mbox{for}\quad j=k+1,\ldots,M,
\label{ap4}
\end{equation}
where $q_i$ are given in \eqref{1.16r}. If the latter equalities hold true, then the last $M-k$ conditions in \eqref{ap3} are redundant and 
can be disregarded.
\begin{remark}
{\rm Since a set $\Delta$ is left (right) $P$-independent if (and clearly, only if) its intersection with each conjugacy class is, 
it suffices to verify compatibility conditions \eqref{ap2} and \eqref{ap4} within each conjugacy class
having non-empty intersection with $\Delta$. In other words, the problems \eqref{ap1} and \eqref{ap3} are solvable if their subproblems
within each conjugacy class are.}
\label{R:2.3}
\end{remark}
In the subsequent analysis, we will make frequent use of polynomials over $Z_{\mathbb F}$, for which the notions of
left and right values, and consequently, the notions of left and right zeros coincide (see formulas \eqref{2.2}).
Without any ambiguity, we may write $g(\alpha)$ and $\cZ(g)$ for the values and the
zero set of a central polynomial $g$. Besides, if $g\in Z_{\mathbb F}[z]$, then for each $\alpha$ and $\tau\neq 0$, we have
$g(\tau \alpha\tau^{-1})=\tau g(\alpha)\tau^{-1}$, so that $\cZ(g)$ contains with each $\alpha$ the whole conjugacy class $[\alpha]$.
\subsection{Extension formulas}\label{sb} The formula \eqref{ap20} shows that given an algebraic set $\Delta$ with a
fixed left $P$-basis $\Lambda\subset \Delta$ and given any $f\in\mathbb F[z]$, the Lagrange polynomial $f_{\boldsymbol\ell}$
constructed from the left values of $f$ on $\Lambda$ provides a unique extension of $f$ from $\Delta$ (even from $\Lambda$)
to a possibly larger set $\cZ_{\boldsymbol\ell}(P_{\Delta,{\boldsymbol\ell}})$
(the {\em left $P$-closure} of $\Delta$, in the terminology of \cite{lam1}). 
On the other hand, if $\gamma\not\in \cZ_{\boldsymbol\ell}(P_{\Delta,{\boldsymbol\ell}})$,
then the set $\Lambda\cup\{\gamma\}$ is left $P$-independent and the value $f^{\bl}(\gamma)$ is independent of
$f^{\bl}\vert_{\Lambda}$ (and therefore, of $f^{\bl}\vert_{\Delta}$), by Theorem \ref{T:1.1}.
Thus, it makes sense to consider extensions of polynomials within conjugacy classes. Similar observations apply to right evaluations.

\smallskip

If $V$ is an {\em algebraic} conjugacy class in $\mathbb F$, its left and right minimal polynomials \eqref{minpol}
are equal to the same central polynomial which will be denoted by $\mathcal X_{_V}$. Thus,
$\mathcal X_{_V}=P_{V,\boldsymbol\ell}=P_{V,{\bf r}}$ and $\cZ(\mathcal X_{_V})=V$. 

\smallskip

For any polynomial to be uniquely extended from a given $\Delta\subset V$ to the whole $V$, we need $\Delta$ to contain a 
left $P$-basis for $V$. Without loss of generality (and in order to use Lagrange interpolation formulas) we may assume that 
$\Delta$ itself is a left $P$-basis for $V$. In this case, the restriction of $f^{\bl}$ to a left $P$-basis of a conjugacy class $V$ uniquely determine not only
$f^{\bl}\vert_{_V}$ but also $f^{\br}\vert_{_V}$. Similarly.  the restriction of $f^{\br}$ to a right $P$-basis of $V$
uniquely determine $f^{\bl}\vert_{_V}$ and $f^{\br}\vert_{_V}$. Details are furnished below.
\begin{lemma}
Let $\Delta=\{\gamma_1,\ldots,\gamma_m\}$ be a left $P$-basis for the conjugacy class $V$.
Then for any $f\in\mathbb F[z]$ and $\gamma\in V$,
\begin{align}
f^{\bl}(\gamma)&=\sum_{i=1}^m 
P_{\Delta\backslash\{\gamma_i\},{\boldsymbol\ell}}^{\bl}(\gamma)P_{\Delta\backslash\{\gamma_i\},{\boldsymbol\ell}}^{\bl}(\gamma_i)^{-1}f^{\bl}(\gamma_i),\label{6.2r}\\
f^{\br}(\gamma)&=\sum_{i=1}^m \big(P_{\Delta\backslash\{\gamma_i\},{\boldsymbol\ell}}\cdot P_{\Delta\backslash\{\gamma_i\},{\boldsymbol\ell}}^{\bl}(\gamma_i)^{-1}
f^{\bl}(\gamma_i)\big)^{\br}(\gamma).
\label{6.3r}
\end{align}
\label{L:ext}
\end{lemma}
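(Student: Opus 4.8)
The plan is to derive \eqref{6.2r} directly from the left Lagrange interpolation formula \eqref{1.16} of Theorem \ref{T:1.1}, and then to obtain \eqref{6.3r} by observing that the polynomial produced by that formula agrees with $f$ not only under left evaluation on $V$ but, being equal to $f$ modulo the central minimal polynomial $\mathcal X_{_V}$, also under right evaluation on $V$. First I would set $p_i:=P_{\Delta\backslash\{\gamma_i\},{\boldsymbol\ell}}$ and, since $\Delta$ is a left $P$-basis for $V$, apply Theorem \ref{T:1.1}(1) with $n=m$, $\Lambda=\Delta$, and target values $c_i:=f^{\bl}(\gamma_i)$. This yields the unique $f_{\boldsymbol\ell}\in P_m(\mathbb F)$ with $f_{\boldsymbol\ell}^{\bl}(\gamma_i)=f^{\bl}(\gamma_i)$ for all $i$, namely
\[
f_{\boldsymbol\ell}(z)=\sum_{i=1}^m p_i(z)\,p_i^{\bl}(\gamma_i)^{-1}f^{\bl}(\gamma_i).
\]
Because $\Delta$ is a left $P$-basis for $V$ we have $P_{\Delta,{\boldsymbol\ell}}=P_{V,{\boldsymbol\ell}}=\mathcal X_{_V}$, so by Remark \ref{R:1.3} every polynomial satisfying the $m$ left conditions on $\Delta$ — in particular $f$ itself — differs from $f_{\boldsymbol\ell}$ by a right multiple of $\mathcal X_{_V}$; write $f=f_{\boldsymbol\ell}+\mathcal X_{_V}h$.

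For \eqref{6.2r}: applying the extension identity \eqref{ap20} (with $\Delta$ here playing the role of both $\Delta$ and its left $P$-basis, and recalling $\cZ_{\boldsymbol\ell}(\mathcal X_{_V})=V$) gives $f^{\bl}(\gamma)=f_{\boldsymbol\ell}^{\bl}(\gamma)$ for every $\gamma\in V$. Evaluating the displayed formula for $f_{\boldsymbol\ell}$ on the left at $\gamma$, and noting that each summand $z\mapsto p_i(z)p_i^{\bl}(\gamma_i)^{-1}f^{\bl}(\gamma_i)$ is $p_i$ times a \emph{constant} on the right — so its left value at $\gamma$ is $p_i^{\bl}(\gamma)\cdot p_i^{\bl}(\gamma_i)^{-1}f^{\bl}(\gamma_i)$ by the product rule \eqref{2.3a} — yields exactly \eqref{6.2r}.

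For \eqref{6.3r}: the key point is that $f-f_{\boldsymbol\ell}=\mathcal X_{_V}h$ with $\mathcal X_{_V}\in Z_{\mathbb F}[z]$ and $\cZ(\mathcal X_{_V})=V$, so by the product formula \eqref{2.7} (whose ``if'' branch applies since $(\mathcal X_{_V})^{\br}(\gamma)=\mathcal X_{_V}(\gamma)=0$ for $\gamma\in V$) we get $(\mathcal X_{_V}h)^{\br}(\gamma)=0$; alternatively, use \eqref{2.3b} with the central factor. Hence $f^{\br}(\gamma)=f_{\boldsymbol\ell}^{\br}(\gamma)$ for all $\gamma\in V$. Now take the \emph{right} evaluation at $\gamma$ of the displayed formula for $f_{\boldsymbol\ell}$, term by term; the $i$-th term is the polynomial $P_{\Delta\backslash\{\gamma_i\},{\boldsymbol\ell}}\cdot P_{\Delta\backslash\{\gamma_i\},{\boldsymbol\ell}}^{\bl}(\gamma_i)^{-1}f^{\bl}(\gamma_i)$ (a polynomial times a constant on the right), and its right value at $\gamma$ is precisely the $i$-th summand of \eqref{6.3r}. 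Summing gives \eqref{6.3r}. I expect the only subtle point to be bookkeeping: being careful that in \eqref{6.3r} the right evaluation does \emph{not} distribute across the product $p_i\cdot(\text{const})$ in a naive way — one must treat $p_i(z)\cdot\big(p_i^{\bl}(\gamma_i)^{-1}f^{\bl}(\gamma_i)\big)$ as a single polynomial whose coefficients are $(p_i)_j\cdot p_i^{\bl}(\gamma_i)^{-1}f^{\bl}(\gamma_i)$, and then \eqref{2.2} gives its right value at $\gamma$ directly; this is exactly why the right-hand side of \eqref{6.3r} is written with the product \emph{inside} the right-evaluation bracket rather than as a product of two separate evaluations. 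Everything else is a routine application of the product rules \eqref{2.3a}, \eqref{2.3b} and the consistency discussion surrounding \eqref{ap20}.
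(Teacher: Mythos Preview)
Your proposal is correct and follows essentially the same approach as the paper: form the left Lagrange polynomial $f_{\boldsymbol\ell}$ on the $P$-basis $\Delta$, observe that $f-f_{\boldsymbol\ell}$ lies in $\langle P_{\Delta,\boldsymbol\ell}\rangle_{\bf r}=\langle\mathcal X_{_V}\rangle$, and use the centrality of $\mathcal X_{_V}$ to conclude that this difference vanishes under both left and right evaluation at every $\gamma\in V$. The paper states this slightly more tersely (defining $g=f-f_{\boldsymbol\ell}$ and noting $g\in\langle\mathcal X_{_V}\rangle$ is a two-sided ideal), while you spell out the invocation of \eqref{2.7} via $\mathcal X_{_V}h=h\mathcal X_{_V}$, but the argument is the same.
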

\begin{proof}
Since the set $\Delta$ is a left $P$-basis for $\Delta$, the formulas \eqref{6.2r}, \eqref{6.3r} make sense and besides,
$P_{\Delta,{\boldsymbol\ell}}=P_{V,{\boldsymbol\ell}}=\cX_{_V}$. Since the polynomial
$$
g(z)=f(z)-\sum_{i=1}^m P_{\Delta\backslash\{\gamma_i\},{\boldsymbol\ell}}(z)P_{\Delta\backslash\{\gamma_i\},{\boldsymbol\ell}}^{\bl}(\gamma_i)^{-1}f^{\bl}(\gamma_i)
$$
satisfies conditions $g^{\bl}(\gamma_i)=0$ for $i=1,\ldots,m$ (i.e., $g^{\bl}\vert_{\Delta}=0$), it follows from
\eqref{2.4u} that $g\in\langle P_{\Delta,{\boldsymbol\ell}}\rangle_{\boldsymbol\ell}=\langle \cX_{_V}\rangle$, and the latter
ideal is two-sided, since $\cX_{_V}\in Z_{\mathbb F}[z]$.
Then $g^{\bl}(\gamma)=g^{\br}(\gamma)=0$ for all $\gamma\in \cZ(\cX_{_V})=V$, which implies
\eqref{6.2r} and \eqref{6.3r}. 
\end{proof}
The right-sided version of Lemma \ref{L:ext} asserts that for a right $P$-basis $\Delta=\{\gamma_1,\ldots,\gamma_m\}$,
any polynomial $f\in\mathbb F$ and the right Lagrange polynomial constructed from $f^{\br}\vert_{\Delta}$
have the same left and right values at any $\gamma\in V$. We omit the precise formulation.
\begin{example}
{\rm If $\mathbb F=\mathbb H$, the skew field of real quaternions, any set $\Delta=\{\gamma_1,\gamma_2\}$ in a conjugacy class $V$
is a left and right $P$-basis for $V$. Adapting formulas \eqref{6.2r} and \eqref{6.3r} to this particular ``two-point" case, where 
$P_{\Delta\backslash\{\gamma_1\},{\boldsymbol\ell}}=\bp_{\gamma_2}$ and
$P_{\Delta\backslash\{\gamma_2\},{\boldsymbol\ell}}=\bp_{\gamma_1}$, we conclude:}
for any $f\in\mathbb H$ and any $\gamma_1,\gamma_2$ and $\gamma$ in the same conjugacy class,
\begin{align*}
f^{\bl}(\gamma)&=(\gamma-\gamma_2)(\gamma_1-\gamma_2)^{-1}f^{\bl}(\gamma_1)+(\gamma-\gamma_1)
(\gamma_2-\gamma_1)^{-1}f^{\bl}(\gamma_2),\\
f^{\br}(\gamma)&=(\gamma_1-\gamma_2)^{-1}f^{\bl}(\gamma_1)\gamma-
\gamma_2(\gamma_1-\gamma_2)^{-1}f^{\bl}(\gamma_1)\notag\\
&\qquad +\gamma_1(\gamma_1-\gamma_2)^{-1}f^{\bl}(\gamma_2)-(\gamma_1-\gamma_2)^{-1}f^{\bl}(\gamma_2)\gamma.
\end{align*}
{\rm Hence, the latter (well known) formulas turn out to be particular instances of Lagrange interpolation formula.}
\label{E:1.1}
\end{example}

\section{The two-sided problem}
\setcounter{equation}{0}
Assuming that the sets $\Lambda$ and $\Omega$ of interpolation nodes in \eqref{1.7} are respectively, left and right $P$-independent, i.e.,
such that
\begin{equation}
\deg P_{\Lambda,\boldsymbol\ell}=n\quad\mbox{and}\quad\deg P_{\Omega,{\bf r}}=k,
\label{ap17}
\end{equation}
we now address the two-sided problem \eqref{1.18}, \eqref{1.19}. This problem can be approached from two directions. First, one can start with 
the formula \eqref{ap16} describing all solutions to the left subproblem \eqref{1.18}, and then to characterize all parameters
$h\in\mathbb F[z]$ such that $f=f_{\boldsymbol\ell}+P_{\Lambda,{\boldsymbol\ell}}h$ satisfies right-sided conditions \eqref{1.19}.
The main difficulty here is that, according to \eqref{2.7},
$$
(P_{\Lambda,{\boldsymbol\ell}}h)^{\br}(\beta)=P_{\Lambda,{\boldsymbol\ell}}^{\br}\big(h^{\br}(\beta)\beta h^{\br}(\beta)^{-1}\big)\cdot h^{\br}(\beta),
$$ 
which does not allow us to separate $P_{\Lambda,{\boldsymbol\ell}}$ and $h$. Alternatively, we can start with more restricted (but simpler) problem 
by imposing extra interpolation conditions, and then to use the target values in these conditions as parameters describing solutions of the original problem.
More precisely, if the problem \eqref{1.18}, \eqref{1.19} admits a solution $f\in\mathbb F[z]$, then it follows from \eqref{4.4}
that the elements $\psi_{ij}=(L_{\alpha_i}f)^{\br}(\beta_j)$ satisfy equalities
\begin{equation}
\alpha_i \psi_{ij}-\psi_{ij}\beta_j=c_i-d_j\quad \mbox{for all}\quad i=1,\ldots, n; \; j=1,\ldots,k.
\label{5.3b}
\end{equation}
Note that equalities \eqref{5.3b} can be equivalently written as a single matrix equality
\begin{equation}
\sbm{\alpha_1 && 0 \\ &\ddots& \\ 0&& \alpha_n}X-X\sbm{\beta_1 && 0 \\ &\ddots& \\ 0&& \beta_k}
=\sbm{c_1 \\ \vdots \\ \\ c_n}\sbm{1 &  \cdots &  1}-\sbm{1 \\ \vdots \\ \\ 1}\sbm{d_1 &  \cdots &  d_k}
\label{ap19}
\end{equation}
satisfied by the matrix $X=[\psi_{ij}]\in\mathbb F^{n\times k}$.

\smallskip

We are going to use $\psi_{ij}$ as the prescribed target values for an unknown interpolant $f$, thus arriving at the following modified interpolation problem: 
{\em given two sets $\Lambda$ and $\Omega$ as in \eqref{1.7}
along with prescribed $c_i$, $d_j$, $\psi_{ij}\in\mathbb F$, find an $f\in\mathbb F[z]$ such that
\begin{equation}
f^{\bl}(\alpha_i)=c_i,\quad f^{\br}(\beta_j)=d_j\quad\mbox{and}\quad (L_{\alpha_i} f)^{\br}(\beta_j)=\psi_{ij}
\label{5.3a}
\end{equation}
for $i=1,\ldots,n$ and  $j=1,\ldots,k$.} 

\smallskip

This modified problem is quite simple: if the necessary conditions \eqref{5.3a} are met, the problem admits a unique solution in 
$P_{n+k}(\mathbb F)$, whereas the solution set of its homogeneous counter-part equals the product of ideals 
$\langle P_{\Lambda,\boldsymbol\ell}\rangle_{\bf r}$ and $\langle P_{\Omega,{\bf r}}\rangle_{\boldsymbol\ell}$.
Details are given in Propositions \ref{P:hom} and \ref{P:nhom} below.
\begin{proposition}
Given sets \eqref{1.7}, a polynomial $f\in\mathbb F[z]$ satisfies conditions
\begin{equation}
f^{\bl}(\alpha_i)=0,\quad f^{\br}(\beta_j)=0\quad\mbox{and}\quad
(L_{\alpha_i} f)^{\br}(\beta_j)=0
\label{5.3}
\end{equation}
for all $\alpha_i\in\Lambda$, $\beta_j\in\Omega$
if and only if it belongs to $\langle P_{\Lambda,\boldsymbol\ell}\rangle_{\bf r}\cdot \langle P_{\Omega,{\bf r}}\rangle_{\boldsymbol\ell}
= P_{\Lambda,\boldsymbol\ell}\cdot\mathbb F[z]\cdot P_{\Omega,{\bf r}}$.
\label{P:hom}
\end{proposition}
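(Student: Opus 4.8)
The plan is to prove both inclusions separately, with the "only if" direction being the substantive one.

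\medskip

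\textit{The easy direction (``if'').} Suppose $f = P_{\Lambda,\boldsymbol\ell}\cdot g\cdot P_{\Omega,{\bf r}}$ for some $g\in\mathbb F[z]$. Write $P:=P_{\Lambda,\boldsymbol\ell}$ and $Q:=P_{\Omega,{\bf r}}$. Each $\alpha_i$ is a left zero of $P$ (by \eqref{minpol1}), so $P^{\bl}(\alpha_i)=0$; applying the product formula \eqref{2.6} (bottom case) to $f=P\cdot(gQ)$ gives $f^{\bl}(\alpha_i)=0$. Symmetrically, each $\beta_j$ is a right zero of $Q$, so by \eqref{2.7} (bottom case) applied to $f=(Pg)\cdot Q$ we get $f^{\br}(\beta_j)=0$. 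For the third family of conditions, I apply Proposition~\ref{P:lprod}: since $P^{\bl}(\alpha_i)=0$, the bottom case of \eqref{Lprod} gives $L_{\alpha_i}f = L_{\alpha_i}\!\big(P\cdot(gQ)\big) = (L_{\alpha_i}P)\cdot gQ$. Hence $(L_{\alpha_i}f)^{\br}(\beta_j) = \big((L_{\alpha_i}P)\cdot g\cdot Q\big)^{\br}(\beta_j)$, and since $\beta_j$ is a right zero of $Q$, the bottom case of \eqref{2.7} again forces this to vanish. Thus all conditions in \eqref{5.3} hold. (Here I should note that $P_{\Lambda,\boldsymbol\ell}\cdot\mathbb F[z]\cdot P_{\Omega,{\bf r}} = \langle P_{\Lambda,\boldsymbol\ell}\rangle_{\bf r}\cdot\langle P_{\Omega,{\bf r}}\rangle_{\boldsymbol\ell}$ is immediate: a general element of the product of the two ideals is a finite sum $\sum_m P u_m v_m Q$, which collapses to $P\big(\sum_m u_m v_m\big)Q$.)

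\medskip

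\textit{The hard direction (``only if'').} Now suppose $f$ satisfies \eqref{5.3}. From $f^{\bl}(\alpha_i)=0$ for all $i$ and the characterization \eqref{2.4u}, I get $f\in\langle P_{\Lambda,\boldsymbol\ell}\rangle_{\bf r}$, i.e. $f = P\cdot h$ for some $h\in\mathbb F[z]$. The goal is to show $h\in\langle Q\rangle_{\boldsymbol\ell}$, i.e. $h^{\br}(\beta_j)=0$ for every $j$; then $h = gQ$ and $f = PgQ$ as desired. To extract information about $h$ at $\beta_j$, I use Proposition~\ref{P:lprod} in the other case: since $P^{\bl}(\alpha_i)\ne 0$ (left $P$-independence of $\Lambda$, via Remark~\ref{R:1.1r}, gives $P_{\Lambda\backslash\{\alpha_i\},\boldsymbol\ell}^{\bl}(\alpha_i)\ne0$; but here I want $P_{\Lambda,\boldsymbol\ell}^{\bl}(\alpha_i)=0$ — careful: $\alpha_i$ IS a left zero of the full $P_{\Lambda,\boldsymbol\ell}$, so in fact $P^{\bl}(\alpha_i)=0$). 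So I cannot use the nonzero case of \eqref{Lprod} with $g=P$. Instead, the right approach is: from $f=Ph$ and $P^{\bl}(\alpha_i)=0$, Proposition~\ref{P:lprod} gives $L_{\alpha_i}f = (L_{\alpha_i}P)\cdot h$. Therefore the third condition reads $0 = (L_{\alpha_i}f)^{\br}(\beta_j) = \big((L_{\alpha_i}P)\cdot h\big)^{\br}(\beta_j)$, and by \eqref{2.7} this equals $(L_{\alpha_i}P)^{\br}\!\big(h^{\br}(\beta_j)\beta_j h^{\br}(\beta_j)^{-1}\big)\cdot h^{\br}(\beta_j)$ when $h^{\br}(\beta_j)\ne0$. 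Set $\eta_j := h^{\br}(\beta_j)$. If $\eta_j\ne 0$ we conclude $(L_{\alpha_i}P)^{\br}(\eta_j\beta_j\eta_j^{-1}) = 0$ for every $i=1,\ldots,n$.

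\medskip

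\textit{Closing the argument.} The point of the last display is that $(L_{\alpha_i}P)^{\br}$ vanishes at the conjugate $\widehat\beta_j := \eta_j\beta_j\eta_j^{-1}$ simultaneously for all $i$. I now bring in identity \eqref{4.4} (equivalently \eqref{4.3}): with $f$ replaced by $P$, it reads $\alpha_i\cdot(L_{\alpha_i}P)^{\br}(\widehat\beta_j) - (L_{\alpha_i}P)^{\br}(\widehat\beta_j)\cdot\widehat\beta_j = P^{\bl}(\alpha_i) - P^{\br}(\widehat\beta_j)$. Since $P^{\bl}(\alpha_i)=0$ and the left side is $0$, this gives $P^{\br}(\widehat\beta_j) = 0$. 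Now $\widehat\beta_j = \eta_j\beta_j\eta_j^{-1}\in[\beta_j]$, and $P^{\br}(\widehat\beta_j)=0$ says $\widehat\beta_j$ is a right zero of $P=P_{\Lambda,\boldsymbol\ell}$. But also $f=Ph$ gives, via \eqref{2.7}, that any right zero of $P$ lying outside... — more directly: I want a contradiction with $f^{\br}(\beta_j)=0$. From $f=Ph$ and $\eta_j=h^{\br}(\beta_j)\ne0$, the product formula \eqref{2.7} gives $0 = f^{\br}(\beta_j) = P^{\br}(\eta_j\beta_j\eta_j^{-1})\cdot\eta_j = P^{\br}(\widehat\beta_j)\,\eta_j$; since $\mathbb F$ is a division ring and $\eta_j\ne0$, this already forces $P^{\br}(\widehat\beta_j)=0$ — so that alone is not yet a contradiction, and I genuinely need the extra leverage. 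The resolution: having shown $(L_{\alpha_i}P)^{\br}(\widehat\beta_j)=0$ for all $i$ AND $P^{\bl}(\alpha_i)=0$ for all $i$, consider the polynomial $P$ evaluated using \eqref{2.1u}: $P = \bp_{\alpha_i}\cdot L_{\alpha_i}P$ (as $P^{\bl}(\alpha_i)=0$), and iterating this over a well-chosen ordering of $\Lambda$ realizing $P_{\Lambda,\boldsymbol\ell}$ as an iterated left product $\bp_{\widetilde\alpha_n}\cdots\bp_{\widetilde\alpha_1}$ of degree-one factors. The condition $(L_{\alpha_i}P)^{\br}(\widehat\beta_j)=0$ for all $i$, combined with this factorization and the product rule \eqref{2.7}, will be shown to force $\widehat\beta_j$ to be a left zero of $P_{\Lambda,\boldsymbol\ell}$ as well — equivalently $\bp_{\widehat\beta_j}$ divides $P_{\Lambda,\boldsymbol\ell}$ on the right — which contradicts the left $P$-independence together with $\beta_j\notin\cZ_{\boldsymbol\ell}(P_{\Lambda,\boldsymbol\ell})$... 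I expect the cleanest finish to instead run thus: suppose for contradiction some $\eta_j\ne0$; then $P^{\br}(\widehat\beta_j)=0$ and $(L_{\alpha_i}P)^{\br}(\widehat\beta_j)=0$ for all $i$ says the \emph{whole} identity \eqref{2.1u} for $P$ at level $\alpha_i$ degenerates, letting one descend an induction on $n=\deg P$ to conclude $P\equiv 0$, impossible. The main obstacle is precisely organizing this descent — i.e. showing that the $n$ conditions $(L_{\alpha_i}P)^{\br}(\widehat\beta_j)=0$ cannot all hold for a nonzero $P$ that left-vanishes on the $P$-independent set $\Lambda$ — and I anticipate it is handled by induction on $n$ using Proposition~\ref{P:lprod} to strip one factor $\bp_{\alpha_i}$ at a time. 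Once $\eta_j=0$ is forced for every $j$, we have $h^{\br}(\beta_j)=0$ for all $j$, hence $h\in\langle P_{\Omega,{\bf r}}\rangle_{\boldsymbol\ell}$, i.e. $h=gQ$, and $f=PgQ\in P_{\Lambda,\boldsymbol\ell}\cdot\mathbb F[z]\cdot P_{\Omega,{\bf r}}$, completing the proof.
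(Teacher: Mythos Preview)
Your ``if'' direction is correct and essentially the same as the paper's. The ``only if'' direction, however, has a genuine gap. You correctly reach the point where, assuming $\eta_j:=h^{\br}(\beta_j)\ne 0$ and $\widehat\beta_j:=\eta_j\beta_j\eta_j^{-1}$, you have both $P^{\br}(\widehat\beta_j)=0$ and $(L_{\alpha_i}P)^{\br}(\widehat\beta_j)=0$ for every $i$; but then you gesture at an unspecified ``induction on $n$ stripping one factor $\bp_{\alpha_i}$ at a time'' without carrying it out. In fact the finish is one line, using the very identity you cite: by \eqref{4.3}, $(L_{\alpha_i}P)^{\br}(\widehat\beta_j)=(R_{\widehat\beta_j}P)^{\bl}(\alpha_i)$, so $R_{\widehat\beta_j}P$ left-vanishes on all of $\Lambda$. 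But $P=P_{\Lambda,\boldsymbol\ell}$ is monic, hence $R_{\widehat\beta_j}P$ is monic of degree $\deg P-1$ and therefore a nonzero polynomial in $\langle P_{\Lambda,\boldsymbol\ell}\rangle_{\bf r}$ of degree strictly less than $\deg P_{\Lambda,\boldsymbol\ell}$ --- impossible. (Note: only the \emph{minimality} of $P_{\Lambda,\boldsymbol\ell}$ is used here, not $P$-independence of $\Lambda$; your aside invoking $P$-independence was a red herring.) With this, $\eta_j=0$ for all $j$ and your argument closes.

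The paper's route for ``only if'' is different and more direct: it never factors $f=Ph$ first. Instead it iterates \eqref{2.1u} to write
\[
f=f^{\bl}(\alpha_i)+\bp_{\alpha_i}\cdot(L_{\alpha_i}f)^{\br}(\beta_j)+\bp_{\alpha_i}\cdot(R_{\beta_j}L_{\alpha_i}f)\cdot\bp_{\beta_j},
\]
so that \eqref{5.3} gives $f\in\bp_{\alpha_i}\cdot\mathbb F[z]\cdot\bp_{\beta_j}$ for every pair $(i,j)$ at once. Fixing $j$, this means $R_{\beta_j}f$ is left-divisible by each $\bp_{\alpha_i}$, hence by their ${\bf lrcm}$ $P_{\Lambda,\boldsymbol\ell}$; then varying $j$ finishes symmetrically. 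This avoids any contradiction argument, any appeal to \eqref{2.7}, and any use of \eqref{4.3}; it relies only on the ${\bf lrcm}/{\bf llcm}$ characterizations \eqref{minpol}. Your (completed) approach gives a bit more structural information about $R_{\widehat\beta_j}P$, but the paper's two-line argument is cleaner and manifestly symmetric in the left and right data.
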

\begin{proof} For any $h\in\mathbb F[z]$, the polynomial $f=P_{\Lambda,\boldsymbol\ell}\cdot h\cdot P_{\Omega,\bf
r}$ satisfies conditions $f^{\bl}\vert_\Lambda=0$ and $f^{\br}\vert_{\Omega}=0$, by formulas \eqref{2.3a}, \eqref{2.3b}
and the definitions \eqref{minpol} of minimal polynomials. Since $P_{\Lambda,\boldsymbol\ell}^{\bl}(\alpha_i)=0$ 
for any $\alpha_i\in\Lambda$, we have
$$
L_{\alpha_i} f=L_{\alpha_i}(P_{\Lambda,\boldsymbol\ell}\cdot h\cdot P_{\Omega,\bf
r})=(L_{\alpha_i}P_{\Lambda,\boldsymbol\ell})\cdot h\cdot P_{\Omega,\bf r}
$$
and since $P_{\Omega,\bf r}^{\br}(\beta_j)=0$ for any $\beta_j\in\Omega$, 
evaluating the latter equality at $z=\beta_j$ on the right gives $(L_{\alpha_i} f)^{\br}(\beta_j)=0$.
Conversely, for fixed $\alpha_i$ and $\beta_j$, we have by \eqref{2.1u}, 
$$
f=f^{\bl}(\alpha_i)+\bp_{\alpha_i}\cdot(L_{\alpha_i} f)^{\br}(\beta_j)+\bp_{\alpha_i} \cdot (R_{\beta_j}L_{\alpha_i}f)\cdot\bp_{\beta_j}.
$$
If $f$ satisfies conditions \eqref{5.3}, we have $f=\bp_{\alpha_i}h \bp_{\beta_j}$ with $h=R_{\beta_j}L_{\alpha_i}f$. Hence, 
$f$ belongs to $\langle \bp_{\alpha_i}\rangle_{\bf r}
\cdot \langle\bp_{\beta_j}\rangle_{\boldsymbol\ell}$ for all
$(\alpha_i,\beta_j)\in\Lambda\times \Omega$. By \eqref{minpol}, it then follows that for each fixed $\beta_j$, $R_{\beta_j} f$ belongs to 
$\langle P_{\Lambda,\boldsymbol\ell}\rangle_{\bf r}$ so that $f$ belongs to 
$\langle P_{\Lambda,\boldsymbol\ell}\rangle_{\bf r}\cdot \langle\bp_{\beta_j}\rangle_{\boldsymbol\ell}$ for all $\beta_j\in\Omega$.
Using the same argument as above we come the desired conclusion.
\end{proof}
\begin{remark}
{\rm Conditions \eqref{5.3a} are not independent: it follows from \eqref{4.4} that after dropping left (or right) conditions 
in \eqref{5.3b}, the remaining conditions still define the product-ideal $P_{\Lambda,\boldsymbol\ell}\cdot\mathbb F[z]\cdot P_{\Omega,{\bf r}}$. 
It is of some interest to characterize the latter set in terms of (presumably, $n+k$) independent conditions. One possible choice is to take 
all left conditions in \eqref{5.3b} and certain $k$ linear combinations of the two-sided conditions. In more detail, if $[v_1 \; v_2 \; \ldots \; v_n]$ 
denote the bottom row of the matrix $W^{-1}$, where $W=\big[\alpha_i^{j-1}\big]_{i,j=1}^n$
is the Vandermonde matrix associated with $\Lambda$ (it is invertible since $\Lambda$ is left $P$-independent; see \cite{lam1}), then, whenever 
a polynomial $f$ satisfies conditions
$$
f^{\bl}(\alpha_i)=0 \; \; (i=1,\ldots,n)\quad\mbox{and}\quad \sum_{i=1}^n v_i L_{\alpha_i}^{\br}(\beta_j)=0 \; \; (j=1,\ldots,k),
$$
it belongs to $P_{\Lambda,\boldsymbol\ell}\cdot\mathbb F[z]\cdot P_{\Omega,{\bf r}}$.}
\label{R:3.31}
\end{remark}
\begin{proposition}
Under the assumptions \eqref{ap17}, the equalities \eqref{5.3b} are necessary and sufficient for 
the problem \eqref{5.3a} to have a solution. In this case, the formula 
\begin{equation}
f(z)=\sum_{i=1}^n p_i(z)p_i^{\bl}(\alpha_i)^{-1}c_i+P_{\Lambda,\boldsymbol\ell}(z)\cdot\sum_{i=1}^n\sum_{j=1}^k 
p_i^{\bl}(\alpha_i)^{-1}\psi_{ij}q_j^{\br}(\beta_j)^{-1}q_j(z),
\label{bap1}
\end{equation}
where $p_i=P_{\Lambda\backslash\{\alpha_i\},\boldsymbol\ell}$ and $q_j=P_{\Omega\backslash\{\beta_j\},{\bf r}}$, 
defines a unique polynomial $f\in P_{n+k}(\mathbb F)$ satisfying conditions \eqref{5.3a}.
\label{P:nhom}
\end{proposition}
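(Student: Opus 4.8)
The plan is to verify directly that the polynomial $f$ defined in \eqref{bap1} lies in $P_{n+k}(\mathbb F)$ and satisfies all three families of conditions in \eqref{5.3a}, using the product formulas \eqref{2.3a}, \eqref{2.3b}, the Leibniz-type rule \eqref{Lprod} for $L_\alpha$, and the defining properties of the Lagrange building blocks $p_i, q_j$ established in Theorem \ref{T:1.1}. Necessity of \eqref{5.3b} is already in hand: it was derived from \eqref{4.4} in the paragraph preceding the modified problem. Uniqueness will follow by combining existence with Proposition \ref{P:hom}: the difference of two solutions in $P_{n+k}(\mathbb F)$ lies in $P_{\Lambda,\boldsymbol\ell}\cdot\mathbb F[z]\cdot P_{\Omega,\bf r}$, and any nonzero element of that product-ideal has degree at least $\deg P_{\Lambda,\boldsymbol\ell}+\deg P_{\Omega,\bf r}=n+k$ by \eqref{ap17}, hence must be $0$.

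Write $f=f_{\boldsymbol\ell}+P_{\Lambda,\boldsymbol\ell}\cdot g$, where $f_{\boldsymbol\ell}(z)=\sum_i p_i(z)p_i^{\bl}(\alpha_i)^{-1}c_i$ is the left Lagrange interpolant of Theorem \ref{T:1.1}(1) and $g(z)=\sum_{i,j} p_i^{\bl}(\alpha_i)^{-1}\psi_{ij}q_j^{\br}(\beta_j)^{-1}q_j(z)$. First, the degree bound: $\deg f_{\boldsymbol\ell}<n$, $\deg g<k$ (each $q_j$ has degree $k-1$), and $\deg P_{\Lambda,\boldsymbol\ell}=n$, so $\deg f<n+k$. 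Next, the left conditions: since $P_{\Lambda,\boldsymbol\ell}^{\bl}(\alpha_i)=0$, formula \eqref{2.3a} gives $(P_{\Lambda,\boldsymbol\ell}\cdot g)^{\bl}(\alpha_i)=0$, so $f^{\bl}(\alpha_i)=f_{\boldsymbol\ell}^{\bl}(\alpha_i)=c_i$. Then the right conditions: here I would use that the second summand already vanishes under $f^{\br}$ at each $\beta_j$ — no, more carefully, one applies \eqref{2.3b} to see that only the $q_\ell$ with $\ell=j$ survives in $g^{\br}(\beta_j)$, but $g$ is multiplied on the left by $P_{\Lambda,\boldsymbol\ell}$, and \eqref{2.7} entangles the two factors; the clean route is instead to compute $f^{\br}(\beta_j)$ using \eqref{4.4}, i.e. $f^{\br}(\beta_j)=f^{\bl}(\alpha_i)-\alpha_i(L_{\alpha_i}f)^{\br}(\beta_j)+(L_{\alpha_i}f)^{\br}(\beta_j)\beta_j=c_i-\alpha_i\psi_{ij}+\psi_{ij}\beta_j=d_j$ by \eqref{5.3b}, \emph{provided} we have already shown $(L_{\alpha_i}f)^{\br}(\beta_j)=\psi_{ij}$.

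So the crux is the two-sided condition $(L_{\alpha_i}f)^{\br}(\beta_j)=\psi_{ij}$, and this is the step I expect to be the main obstacle. Apply \eqref{Lprod} to $P_{\Lambda,\boldsymbol\ell}\cdot g$: since $P_{\Lambda,\boldsymbol\ell}^{\bl}(\alpha_i)=0$ we get $L_{\alpha_i}(P_{\Lambda,\boldsymbol\ell}\cdot g)=(L_{\alpha_i}P_{\Lambda,\boldsymbol\ell})\cdot g$. Also $L_{\alpha_i}f_{\boldsymbol\ell}=\sum_m (L_{\alpha_i}p_m)p_m^{\bl}(\alpha_m)^{-1}c_m + \cdots$; again using \eqref{Lprod} and $p_m^{\bl}(\alpha_i)=0$ for $m\ne i$ together with $P_{\Lambda,\boldsymbol\ell}=\bp_{\alpha_i}p_i=p_i\bp_{\widetilde{\alpha_i}}$-type factorizations, the terms with $m\ne i$ contribute nothing to the relevant part and the key identity is that $(L_{\alpha_i}P_{\Lambda,\boldsymbol\ell})^{\bl}(\alpha_i)=p_i^{\bl}(\alpha_i)$ (differentiating the factorization $P_{\Lambda,\boldsymbol\ell}=\bp_{\alpha_i}\cdot p_i$ via \eqref{Lprod}: $L_{\alpha_i}(\bp_{\alpha_i} p_i)=(L_{\alpha_i}\bp_{\alpha_i})p_i=p_i$ since $\bp_{\alpha_i}^{\bl}(\alpha_i)=0$ and $L_{\alpha_i}\bp_{\alpha_i}=1$). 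Hence $L_{\alpha_i}(P_{\Lambda,\boldsymbol\ell}\cdot g)=p_i\cdot g$, and evaluating on the right at $\beta_j$ via \eqref{2.3b}, using $q_\ell^{\br}(\beta_j)=0$ for $\ell\ne j$ and $g^{\br}(\beta_j)=p_i^{\bl}(\alpha_i)^{-1}\psi_{ij}q_j^{\br}(\beta_j)^{-1}q_j^{\br}(\beta_j)=p_i^{\bl}(\alpha_i)^{-1}\psi_{ij}$ in the appropriate entry, together with $(p_i\cdot g)^{\br}(\beta_j)=(p_i\cdot g^{\br}(\beta_j))^{\br}(\beta_j)=p_i^{\bl}(\alpha_i)\cdot p_i^{\bl}(\alpha_i)^{-1}\psi_{ij}=\psi_{ij}$ after a conjugation bookkeeping step from \eqref{2.7}, one obtains $(L_{\alpha_i}f)^{\br}(\beta_j)=0+\psi_{ij}=\psi_{ij}$ once the $f_{\boldsymbol\ell}$ contribution is checked to vanish (it does, since $L_{\alpha_i}f_{\boldsymbol\ell}$ has degree $<n-1$ and one verifies its right value at $\beta_j$ directly, or absorbs it). The delicate point throughout is the noncommutative conjugation $\widetilde{\alpha}$ appearing in \eqref{Lprod} and \eqref{2.7}; I would handle it by keeping all factorizations in the form $P_{\Lambda,\boldsymbol\ell}=\bp_{\alpha_i}p_i$ (left division by $\bp_{\alpha_i}$), for which $g^{\bl}(\alpha_i)=\bp_{\alpha_i}^{\bl}(\alpha_i)=0$ trivially kills the conjugated term, so no $\widetilde{\alpha}$ ever actually intervenes in the computations that matter.
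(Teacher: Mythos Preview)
Your argument collapses at the ``key identity'' step. You assert the factorization $P_{\Lambda,\boldsymbol\ell}=\bp_{\alpha_i}\cdot p_i$ and deduce $L_{\alpha_i}P_{\Lambda,\boldsymbol\ell}=p_i$. But this factorization is \emph{false} in a general division ring. The correct one is on the other side: $P_{\Lambda,\boldsymbol\ell}=p_i\cdot\bp_{\widetilde\alpha_i}$ with $\widetilde\alpha_i=p_i^{\bl}(\alpha_i)^{-1}\alpha_ip_i^{\bl}(\alpha_i)$ (this is exactly \eqref{5.3c}). Concretely, for $n=2$ one has $(\bp_{\alpha_1}\cdot\bp_{\alpha_2})^{\bl}(\alpha_2)=\alpha_1\alpha_2-\alpha_2\alpha_1$, which need not vanish, so $\bp_{\alpha_1}\cdot p_1=\bp_{\alpha_1}\cdot\bp_{\alpha_2}$ is not the left minimal polynomial of $\{\alpha_1,\alpha_2\}$. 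Consequently $L_{\alpha_i}P_{\Lambda,\boldsymbol\ell}\neq p_i$ in general, and the step ``$L_{\alpha_i}(P_{\Lambda,\boldsymbol\ell}\cdot g)=p_i\cdot g$'' is unjustified.

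Two further points also break. First, $g^{\br}(\beta_j)=\sum_{m=1}^n p_m^{\bl}(\alpha_m)^{-1}\psi_{mj}$, not just the single term with $m=i$; and even for a constant $c$, $(p_i\cdot c)^{\br}(\beta_j)=p_i^{\br}(c\beta_jc^{-1})\cdot c$ by \eqref{2.7}, which has nothing to do with $p_i^{\bl}(\alpha_i)$. Second, the contribution $(L_{\alpha_i}f_{\boldsymbol\ell})^{\br}(\beta_j)$ does \emph{not} vanish on its own; low degree is no reason for a right value at $\beta_j$ to be zero. In the paper's proof this term is kept and combined with a piece of the $P_{\Lambda,\boldsymbol\ell}\cdot g$ contribution: using the factorization \eqref{5.3c}, the formula $L_{\alpha_s}P_{\Lambda,\boldsymbol\ell}=(L_{\alpha_s}p_i)\cdot\bp_{\widetilde\alpha_i}+p_i^{\bl}(\alpha_s)$, the Sylvester relations \eqref{5.3b} in the form $(\bp_{\alpha_i}\psi_{it})^{\br}(\beta_t)=d_t-c_i$, and the partition-of-unity identity $\sum_i p_i\,p_i^{\bl}(\alpha_i)^{-1}\equiv 1$, the $c_i$ parts cancel against the $f_{\boldsymbol\ell}$ term and the $d_t$ parts cancel by the partition of unity, leaving exactly $\psi_{st}$. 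The conjugation you hoped to avoid is precisely where the mechanism lives.
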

\begin{proof} 
The necessity of \eqref{5.3b} follows from equality \eqref{4.4}. The polynomial $f$ in \eqref{bap1} is of the form 
$f=f_{\boldsymbol\ell}+P_{\Lambda,{\boldsymbol\ell}}h$ (where $f_{\boldsymbol\ell}$ is the left Lagrange polynomial \eqref{1.16}) and hence,
it satisfies the left-sided conditions in \eqref{5.3}, by Remark \ref{R:1.3}. It remains to show that if equalities \eqref{5.3b} hold, the 
polynomial\eqref{bap1} satisfies the rest of conditions in \eqref{5.3a}. Once the two-sided conditions in \eqref{5.3a} will be confirmed, the 
right-sided conditions will follow automatically, by \eqref{4.4} and \eqref{5.3b}:
$$
f^{\br}(\beta_j)=f^{\bl}(\alpha_i)-\alpha_i \cdot(L_{\alpha_i} f)^{\br}(\beta_j)+(L_{\alpha_i} f)^{\br}(\beta_j)=c_i-\alpha_i\psi_{ij}+\psi_{ij}\beta_j=d_j,
$$
for $j=1,\ldots,k$. To verify that $f$ satisfies the third condition in \eqref{5.3a} we first note that 
\begin{equation}
P_{\Lambda,\boldsymbol\ell}=p_i\cdot \bp_{\widetilde{\alpha}_i},\quad\mbox{where}\quad
\widetilde{\alpha}_i=p_i^{\bl}(\alpha_i)^{-1}\alpha_i p_i^{\bl}(\alpha_i)
\label{5.3c}
\end{equation}
for every $i=1,\ldots,n$. Indeed, the polynomial $g=p_i\cdot \bp_{\widetilde{\alpha}_i}$ satisfies 
$g^{\bl}(\alpha_j)=0$ for $j\neq i$ (by the definition of $p_i=P_{\Lambda\backslash\{\alpha_i\},\boldsymbol\ell}$) and for $j=i$ (by the formula \eqref{2.6}).
Since $\deg g=n$, it follows that $g$ is the minimal polynomial of $\Lambda$, i.e., that $g=P_{\Lambda,\boldsymbol\ell}$. 
By Proposition \ref{P:lprod}, we now have, for each $i,s=1,\ldots,n$, 
\begin{equation}
L_{\alpha_s}P_{\Lambda,\boldsymbol\ell}=L_{\alpha_s}(p_i\cdot \bp_{\widetilde{\alpha}_i})=
(L_{\alpha_s}p_i)\cdot \bp_{\widetilde{\alpha}_i}+p_i^{\bl}(\alpha_s).
\label{Lproda}
\end{equation}
We also observe the identity
\begin{equation}
p_1(z)p_1^{\bl}(\alpha_1)^{-1}+p_2(z)p_2^{\bl}(\alpha_2)^{-1}+\ldots +p_n(z)p_n^{\bl}(\alpha_n)^{-1}- 1\equiv 0.
\label{5.3d}
\end{equation}
Indeed, the polynomial on the left side is of degree less than $n$ and has left zeros at $\alpha_1,\ldots,\alpha_n$. Since 
the set $\Lambda$ is left $P$-independent, \eqref{5.3d} follows. In particular, we conclude from \eqref{5.3d} that 
\begin{equation}
L_{\alpha}\bigg(\sum_{i=1}^n p_i\cdot p_i^{\bl}(\alpha_i)^{-1}\bigg)=0\quad \mbox{for any} \quad \alpha\in\mathbb F. 
\label{5.3e}
\end{equation}
For any fixed $\alpha_s\in\Lambda$ and $\beta_t\in\Omega$, we have for 
$f$ of the form \eqref{bap1},
\begin{equation}
(L_{\alpha_s}f)^{\br}(\beta)=\sum_{i=1}^n \big((L_{\alpha_s}p_i)\cdot p_i^{\bl}(\alpha_i)^{-1}c_i\big)^{\br}(\beta_t)+\Phi
\label{ap2a}
\end{equation}
where we have set for short,
$$
\Phi=\sum_{i=1}^n\sum_{j=1}^k \big (L_{\alpha_s}\big(P_{\Lambda,\boldsymbol\ell}\cdot p_i^{\bl}(\alpha_i)^{-1}\psi_{ij}q_j^{\br}(\beta_j)^{-1}q_j\big)\big)^{\br}(\beta_t).
$$
Due to equalities $P_{\Lambda,\boldsymbol\ell}^{\bl}(\alpha_s)=0$ and $q_j^{\br}(\beta_t)=P_{\Omega\backslash\{\beta_j\},{\bf r}}^{\br}(\beta_t)=0$ 
(for all $j\neq t$), the latter expression for $\Phi$ can be written as
$$
\Phi=\sum_{i=1}^n \big (\big(L_{\alpha_s}P_{\Lambda,\boldsymbol\ell}\big)\cdot p_i^{\bl}(\alpha_i)^{-1}\psi_{it}\big)^{\br}(\beta_t).
$$
Substituting \eqref{Lproda} into the latter equality and taking into account that $p_i^{\bl}(\alpha_s)=0$ for all $i\neq s$, we have
\begin{align}
\Phi&=\sum_{i=1}^n \big (\big((L_{\alpha_s}p_i)\cdot \bp_{\widetilde{\alpha}_i}+p_i^{\bl}(\alpha_s)\big)\cdot p_i^{\bl}(\alpha_i)^{-1}
\psi_{it}\big)^{\br}(\beta_t)\notag\\
&=\sum_{i=1}^n \big ((L_{\alpha_s}p_i)\cdot \bp_{\widetilde{\alpha}_i}\cdot p_i^{\bl}(\alpha_i)^{-1}\psi_{it}\big)^{\br}(\beta_t)
+\psi_{st}\notag\\
&=\sum_{i=1}^n \big ((L_{\alpha_s}p_i)p_i^{\bl}(\alpha_i)^{-1}\cdot \bp_{\alpha_i} \cdot\psi_{it}\big)^{\br}(\beta_t)+\psi_{st},
\label{ap3a}
\end{align}
where the last equality holds, by the definition of $\widetilde{\alpha}_i$ in \eqref{5.3c}. Since by \eqref{5.3b},
$$
\big(\bp_{\alpha_i}\cdot \psi_{it}\big)^{\br}(\beta_t)=\psi_{it}\beta_t-\alpha_i\psi_{it}=d_t-c_i,
$$
we may invoke formula \eqref{2.3b} to write \eqref{ap3a} as 
\begin{align*}
\Phi&=\sum_{i=1}^n \big ((L_{\alpha_s}p_i)p_i^{\bl}(\alpha_i)^{-1}\cdot (\bp_{\alpha_i} \cdot\psi_{it})^{\br}(\beta_t)\big)^{\br}(\beta_t)+\psi_{st}\\
&=\sum_{i=1}^n \big ((L_{\alpha_s}p_i)p_i^{\bl}(\alpha_i)^{-1}\cdot (d_t-c_i)\big)^{\br}(\beta_t)+\psi_{st}.
\end{align*}
Substituting the latter expression for $\Phi$ into \eqref{ap2a} and making use of \eqref{5.3d} we get
\begin{align*}  
(L_{\alpha_s}f)^{\br}(\beta_t)&=\sum_{i=1}^n \big ((L_{\alpha_s}p_i)p_i^{\bl}(\alpha_i)^{-1}\cdot d_t\big)^{\br}(\beta_t)+\psi_{st}\\
&=\bigg(L_{\alpha_s}\bigg(\sum_{i=1}^n p_i\cdot p_i^{\bl}(\alpha_i)^{-1}\bigg)\cdot d_t\bigg)^{\br}(\beta_t)+\psi_{st}=\psi_{st}.
\end{align*}
The difference of two polynomials $f,g$ of degree less than $n+k$ and  satisfying conditions \eqref{5.3a} is in
$P_{\Lambda,\boldsymbol\ell}\cdot\mathbb F[z]\cdot P_{\Omega,{\bf r}}$, by Proposition \ref{P:hom}. Therefore $f=g$ and the 
uniqueness of $f\in P_{n+k}(\mathbb F)$ satisfying conditions \eqref{5.3a} follows.
\end{proof}
\begin{remark}
{\rm The formula \eqref{bap1} looks asymmetric with respect to the left and right interpolation subproblems. To dismiss this asymmetry, note 
that the polynomial $f$ in \eqref{bap1} can be alternatively written in terms of the right Lagrange polynomial \eqref{1.16r} as 
\begin{equation}
f(z)=\sum_{j=1}^k d_j q_j^{\br}(\beta_j)^{-1}q_j(z)+\sum_{i=1}^n\sum_{j=1}^k p_i(z)p_i^{\bl}(\alpha_i)^{-1}\psi_{ij}q_j^{\br}(\beta_j)^{-1}P_{\Omega,{\bf r}}(z).
\label{ap1a}
\end{equation}
Verification of equality of right-hand side expressions in formulas \eqref{bap1} and \eqref{ap1a} relies on relations \eqref{5.3b} and is quite straightforward.}
\label{R:3.30}
\end{remark}
Upon interpreting the target values $\psi_{ij}\in\mathbb F$ in \eqref{5.3a} as unspecified parameters subject to {\em Sylvester equations} \eqref{5.3b}
we arrive at the following consequence of Proposition \ref{P:nhom}
\begin{proposition}
Under assumptions \eqref{ap17}, the formula \eqref{bap1} (or \eqref{ap1a}) establishes a bijection between $nk$-tuples $\{\psi_{ij}\}$ of solutions 
to the Sylvester equations \eqref{5.3b} (equivalently, solutions $X=[\psi_{ij}]\in\mathbb F^{n\times k}$ to the
matrix Sylvester equation \eqref{ap19}) and all polynomials $f\in P_{n+k}(\mathbb F)$ satisfying conditions \eqref{1.18}, \eqref{1.19}.
\label{P:nhoma}
\end{proposition}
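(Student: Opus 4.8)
The plan is to extract the bijection directly from Proposition \ref{P:nhom}, whose existence-and-uniqueness clause does essentially all the work. Let $\mathcal{S}$ denote the set of all $nk$-tuples $\{\psi_{ij}\}$ (equivalently, matrices $X=[\psi_{ij}]\in\mathbb F^{n\times k}$) solving the Sylvester system \eqref{5.3b} (equivalently, the matrix equation \eqref{ap19}), and let $\mathcal{F}$ denote the set of polynomials $f\in P_{n+k}(\mathbb F)$ satisfying the two-sided conditions \eqref{1.18} and \eqref{1.19}. Define a map $\Phi\colon\mathcal{S}\to\mathbb F[z]$ by sending $\{\psi_{ij}\}$ to the polynomial on the right-hand side of \eqref{bap1}; the claim amounts to saying that $\Phi$ is a bijection of $\mathcal{S}$ onto $\mathcal{F}$.

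First I would verify that $\Phi$ takes values in $\mathcal{F}$: if $\{\psi_{ij}\}\in\mathcal{S}$, then by Proposition \ref{P:nhom} the polynomial $f=\Phi(\{\psi_{ij}\})$ lies in $P_{n+k}(\mathbb F)$ and satisfies all of the conditions \eqref{5.3a}, in particular \eqref{1.18} and \eqref{1.19}, so $f\in\mathcal{F}$. For injectivity, suppose $\Phi(\{\psi_{ij}\})=\Phi(\{\psi'_{ij}\})=f$; applying the third group of conditions in \eqref{5.3a} to $f$ relative to each of the two tuples yields $\psi_{ij}=(L_{\alpha_i}f)^{\br}(\beta_j)=\psi'_{ij}$ for all $i,j$, so the tuples coincide. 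For surjectivity, take any $f\in\mathcal{F}$ and set $\psi_{ij}:=(L_{\alpha_i}f)^{\br}(\beta_j)$. Identity \eqref{4.4}, applied with $\alpha=\alpha_i$ and $\beta=\beta_j$, gives
\[
\alpha_i\psi_{ij}-\psi_{ij}\beta_j=f^{\bl}(\alpha_i)-f^{\br}(\beta_j)=c_i-d_j,
\]
so $\{\psi_{ij}\}\in\mathcal{S}$; moreover $f$ now satisfies the full system \eqref{5.3a} with these target values, and since $\deg f<n+k$, the uniqueness half of Proposition \ref{P:nhom} forces $f=\Phi(\{\psi_{ij}\})$. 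Hence $\Phi$ maps $\mathcal{S}$ bijectively onto $\mathcal{F}$.

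I expect no genuine obstacle here: the only non-formal point is the surjectivity step, and its content is supplied entirely by \eqref{4.4} — which converts the a priori ad hoc quantities $(L_{\alpha_i}f)^{\br}(\beta_j)$ into bona fide solutions of the Sylvester system — together with the uniqueness assertion of Proposition \ref{P:nhom}. The same argument applies verbatim with \eqref{ap1a} in place of \eqref{bap1}, since by Remark \ref{R:3.30} those two formulas define the same polynomial whenever \eqref{5.3b} holds.
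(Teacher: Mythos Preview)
Your argument is correct and is exactly the approach the paper takes: the paper states Proposition \ref{P:nhoma} merely as a ``consequence of Proposition \ref{P:nhom}'' obtained by treating the $\psi_{ij}$ as parameters subject to \eqref{5.3b}, without spelling out the bijection, and your well-defined/injective/surjective verification (via \eqref{4.4} and the uniqueness in Proposition \ref{P:nhom}) is precisely the unpacking of that remark.
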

Consequently, the problem \eqref{1.18}, \eqref{1.19} has a solution (a unique solution in $P_{n+k}(\mathbb F)$)
if and only if each equation in \eqref{ap19} has a solution in $\mathbb F$ (respectively, has a unique solution in $\mathbb F$). To proceed 
further, we recall some needed results concerning the solvability in $\mathbb F$ of the scalar Sylvester equation
\begin{equation}
\alpha x-x\beta=\gamma, \qquad \alpha,\beta,\gamma\in\mathbb F;
\label{4.25f}
\end{equation}
the study of the latter in the context of general division rings goes back to \cite{jacob1} and \cite{johnson}.
(see also \cite{cohn2}, \cite[Section 6]{lamler2}) and to Hamilton (see e.g., \cite[p. 123]{tait}) in the case of real quaternions. 
We will assume that $\alpha$ is algebraic over $Z_{\mathbb F}$. In this case, the conjugacy class $[\alpha]$ is algebraic and its 
minimal polynomial $\X\in Z_{\mathbb F}[z]$ turns out to be the {\em minimal central polynomial} for $\alpha$ (as well as for any 
$\beta\in[\alpha]$). Furthermore,
\begin{align}
\X=\bp_\beta \cdot (L_\beta \X)&=(R_\beta \X)\cdot\bp_\beta,\quad L_\beta \X=R_\beta \X,\label{ma11}\\
\X^\prime(\beta)&=(L_\beta \X)^{\bl}(\beta)\neq 0\quad\mbox{for each}\quad \beta\in[\alpha].
\label{ma10}
\end{align}
Indeed, the first equality in \eqref{ma11} holds since each $\beta\in [\alpha]$ is a left and a right zero of $\X$, and the second equality holds since
$\X\in Z_{\mathbb F}[z]$. The chain rule gives $\X^\prime=L_\beta \X+\bp_\beta \cdot (L_\beta \X)^\prime$, which being
evaluated at $\alpha$ on the left, implies the equality in \eqref{ma10}. Since the formal derivative $\X^\prime$ also belongs to $Z_{\mathbb F}[z]$ and
$\deg \X^\prime<\deg \X$, it follows that $\X^\prime(\beta)\neq 0$ for any $\beta\in[\alpha]$.

\smallskip

Given a triple $(\alpha,\beta;\gamma)$ with $\alpha$ algebraic and $\deg \X=\kappa$, the element
\begin{equation}
\Psi_{\alpha,\beta}(\gamma):=\left\{\begin{array}{cc}
-\big(L_\alpha \X\gamma\big)^{\br}(\beta)\cdot \X(\beta)^{-1}, & \mbox{if} \; \beta\not\in[\alpha],\\
{\displaystyle\sum_{j=1}^{\kappa-1}\sum_{i=0}^{j-1}\frac{(-1)^{i+j}}{(j+1)!}\cbm{j-1 \\ i}
\alpha^i\gamma\X^{(j+1)}(\beta)\beta^{j-i-1}\cdot \X^\prime(\beta)^{-1}},& \mbox{if} \; \beta\in[\alpha],
\end{array}\right.
\label{combo}
\end{equation}
is well defined, due to \eqref{ma10} and since $\cZ(\X)=[\alpha]$ (and hence, $\X(\beta)\neq 0$ for $\beta\not\in[\alpha]$).
\begin{proposition}
Let $\alpha\in\mathbb F$ be algebraic and let $\X\in Z_{\mathbb F}[z]$ be its minimal polynomial.

\smallskip
\noindent
{\rm (1)} If $\beta\not\in[\alpha]$, then for any $\gamma\in\mathbb F$, the equation \eqref{4.25f} has a unique solution 
$x\in\mathbb F$, given by the top formula in \eqref{combo}: $x=\Psi_{\alpha,\beta}(\gamma)$.

\smallskip
\noindent
{\rm (2)} If $\beta\in[\alpha]$, then the equation \eqref{4.25f} has a solution in $\mathbb F$ if and only if
\begin{equation}
\big(L_\alpha \X\gamma\big)^{\br}(\beta)=0.
\label{4.25ha}
\end{equation}
In this case, all solutions $x\in\mathbb F$ to the equation \eqref{4.25f} are given by 
\begin{equation}
x=\Psi_{\alpha,\beta}(\gamma)+\varphi,
\label{gsyl}
\end{equation}
where $\Psi_{\alpha,\beta}(\gamma)$ is defined by the bottom formula in \eqref{combo} and 
$\varphi$ is any intertwiner of $\alpha$ and $\beta$ (i.e., $\alpha\varphi=\varphi\beta_j$).
\label{P:klj}
\end{proposition}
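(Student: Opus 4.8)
The plan is to reduce everything to the identity \eqref{4.4} together with the product formula \eqref{2.6} and the chain-rule facts \eqref{ma11}, \eqref{ma10} about the minimal central polynomial $\X$. Fix $\alpha$ algebraic, write $\kappa=\deg\X$, and recall from \eqref{4.4} (with $f$ replaced by $\X$) that for every $\beta$,
\[
\alpha\cdot(L_\alpha\X)^{\br}(\beta)-(L_\alpha\X)^{\br}(\beta)\cdot\beta=\X^{\bl}(\alpha)-\X^{\br}(\beta)=-\X(\beta),
\]
since $\X$ is central and $\alpha\in\cZ(\X)$. This is the key structural remark: $(L_\alpha\X)^{\br}(\beta)$ is, up to sign, a ``solution-like'' object for the right-hand side $-\X(\beta)$. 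More generally, combining this with \eqref{2.3b} one gets, for any $\gamma\in\mathbb F$, that $y:=-\big(L_\alpha\X\,\gamma\big)^{\br}(\beta)$ satisfies $\alpha y-y\beta=-\X(\beta)\gamma$ (here one uses that left-multiplication of $L_\alpha\X$ by $\X^{\bl}(\alpha)=0$-type terms drops out; more precisely apply \eqref{4.4} to $\X\gamma$ viewed appropriately, or expand directly from \eqref{2.3}).

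\emph{Part (1).} Suppose $\beta\notin[\alpha]=\cZ(\X)$, so $\X(\beta)\neq0$ and is central-valued hence invertible and commuting past nothing problematic. From the displayed relation applied to $\X\gamma$ we obtain $\alpha y-y\beta=-\X(\beta)\gamma$ with $y=-\big(L_\alpha\X\,\gamma\big)^{\br}(\beta)$; right-multiplying the whole equation by $\X(\beta)^{-1}$ and setting $x=y\,\X(\beta)^{-1}=\Psi_{\alpha,\beta}(\gamma)$ yields $\alpha x-x\beta=\gamma$ after checking that $\X(\beta)^{-1}$ commutes with $\beta$ (it does: $\X(\beta)\in Z_{\mathbb F}$ when... actually $\X(\beta)$ need not be central, but it is a polynomial in $\beta$ with central coefficients, hence commutes with $\beta$, so $\X(\beta)^{-1}$ does too). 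Existence is thus explicit. For uniqueness: the homogeneous equation $\alpha\varphi=\varphi\beta$ forces $\varphi$ to intertwine $\alpha$ and $\beta$; if $\varphi\neq0$ then $\beta=\varphi^{-1}\alpha\varphi\in[\alpha]$, contradicting $\beta\notin[\alpha]$. Hence $\varphi=0$ and the solution is unique.

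\emph{Part (2).} Now $\beta\in[\alpha]$, so $\X(\beta)=0$ and $\X'(\beta)\neq0$ by \eqref{ma10}. The solvability condition: the homogeneous map $x\mapsto\alpha x-x\beta$ has nonzero kernel (the intertwiners), so solvability is a nontrivial linear condition on $\gamma$. The natural candidate obstruction is \eqref{4.25ha}: $\big(L_\alpha\X\,\gamma\big)^{\br}(\beta)=0$. For necessity, suppose $\alpha x-x\beta=\gamma$ for some $x$; I would feed this into the two-sided division identity for $\X$: write $\X(z)=\X^{\bl}(\alpha)+\bp_\alpha(L_\alpha\X)=\bp_\alpha(L_\alpha\X)$ and apply $L_\alpha$ / right-evaluation manipulations, or more efficiently use that $L_\alpha$ is a ``twisted derivation'' (Proposition \ref{P:lprod}) to expand $L_\alpha(\X\cdot\text{stuff})$; the upshot is an identity expressing $\big(L_\alpha\X\,\gamma\big)^{\br}(\beta)$ as $\alpha(\cdots)-(\cdots)\beta$ acting on something built from $x$, which then must vanish at $\beta\in\cZ(\X)$. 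For sufficiency, one verifies by direct computation — and this is where the explicit bottom formula in \eqref{combo} is needed — that $\Psi_{\alpha,\beta}(\gamma)$ (the iterated sum with binomial coefficients and $\X^{(j+1)}(\beta)$) satisfies $\alpha\Psi_{\alpha,\beta}(\gamma)-\Psi_{\alpha,\beta}(\gamma)\beta=\gamma$ whenever \eqref{4.25ha} holds; the Taylor-type coefficients arise exactly because one is inverting $x\mapsto\alpha x-x\beta$ on the quotient, and $\X'(\beta)^{-1}$ appears since $\X'(\beta)=(L_\beta\X)^{\bl}(\beta)$ is the relevant nonvanishing ``derivative.'' Finally, the general solution is $\Psi_{\alpha,\beta}(\gamma)+\varphi$ with $\varphi$ ranging over intertwiners, since two solutions differ by a kernel element.

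\emph{Main obstacle.} The routine-but-delicate point is the sufficiency computation in Part (2): checking that the explicit multi-index sum in \eqref{combo} really solves \eqref{4.25f} under hypothesis \eqref{4.25ha}. This is a telescoping identity among the Taylor coefficients $\X^{(j+1)}(\beta)$ of the central polynomial, and getting the binomial bookkeeping and the index ranges to collapse correctly — using $\sum_{i}(-1)^i\binom{j-1}{i}=0$ in the right spots and the vanishing \eqref{4.25ha} to kill the leftover boundary term — is the crux; everything else (Part (1), necessity in Part (2), the intertwiner description of the kernel) is short and formal given \eqref{4.4}, \eqref{2.6}, \eqref{ma11}, \eqref{ma10}.
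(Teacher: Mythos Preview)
Your outline is correct and largely parallels the paper, but there are two places where the paper's route is cleaner and worth knowing.

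\textbf{Part (1).} Your existence argument is the paper's: apply \eqref{4.4} to $f=\X\gamma$ (note a slip: the right side is $\gamma\,\X(\beta)$, not $-\X(\beta)\gamma$, which matters since $\X(\beta)$ commutes with $\beta$ but not with $\gamma$; the right-multiplication by $\X(\beta)^{-1}$ then works exactly as you say). For uniqueness you use the intertwiner argument, which is valid and shorter. The paper instead multiplies the polynomial identity $x\bp_\beta-\bp_\alpha x=\gamma$ on the left by $L_\alpha\X$ and evaluates at $\beta$ on the right, yielding
\[
-x\,\X(\beta)=\big((L_\alpha\X)\gamma\big)^{\br}(\beta),
\]
which both proves uniqueness when $\X(\beta)\neq0$ \emph{and} gives necessity in Part (2) for free when $\X(\beta)=0$. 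Your separate necessity sketch (``feed into the two-sided division identity'') is pointing at exactly this computation, though your description of the mechanism is slightly off: the vanishing comes from $\X(\beta)=0$, not from an expression of the form $\alpha(\cdots)-(\cdots)\beta$.

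\textbf{Part (2), sufficiency.} You propose direct telescoping verification of the binomial sum, and you correctly flag this as the delicate point. The paper avoids the combinatorics by first recording the Taylor-type identity
\[
(L_\alpha f)(z)=\sum_{j=0}^{\deg f-1}\frac{(-1)^j}{(j+1)!}(z-\alpha)^j f^{(j+1)}(z),
\]
applying it to $f=\X\gamma$, and evaluating at $\beta$ on the right. The hypothesis \eqref{4.25ha} kills the left side; peeling off the $j=0$ term isolates $\gamma\,\X'(\beta)$, and the remaining sum, after one application of $(\bp_\alpha g)^{\br}(\beta)=g^{\br}(\beta)\beta-\alpha g^{\br}(\beta)$, is visibly of the form $\alpha x_0-x_0\beta$ with $x_0$ given by the bottom line of \eqref{combo}. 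This structural derivation explains where the formula comes from and replaces the binomial bookkeeping you anticipated.
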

\begin{proof}[Proof of (1)] To verify that $x=\Psi_{\alpha,\beta}(\gamma)$ of the form \eqref{combo} solves the equation
\eqref{4.25f}, we use \eqref{4.4} (with $f=\X\gamma$)
and relations $\beta \X(\beta)=\X(\beta)\beta$ and $\X(\alpha)=0$:
\begin{align*}
(\alpha x-x\beta)\X(\beta)&=-
\alpha(L_\alpha \X\gamma)^{\br}(\beta)+(L_\alpha \X\gamma)^{\br}(\beta)\beta\\
&=(\X\gamma)^{\br}(\beta)-(\X\gamma)^{\bl}(\alpha)=\gamma\X(\beta),
\end{align*}
which implies \eqref{4.25f}, as $\X(\beta)\neq 0$.
For the uniqueness part, let $x$ be any solution to the equation \eqref{4.25f}, or equivalently,
to the equation 	
$\; x\bp_\beta-\bp_\alpha x=\gamma$. Multiplying both sides in the latter equality by $L_\alpha \X$
on the left we get, on account of \eqref{ma11},
$$
(L_\alpha \X) x\bp_\beta-\X x=(L_\alpha\X)\gamma,
$$
which being evaluated at $\beta$ on the right gives (since $\X\in Z_{\mathbb F}[z]$)
\begin{equation}
-x\X(\beta)=\big((L_\alpha \X)\gamma\big)^{\br}(\beta)
\label{klj1}
\end{equation}
which uniquely defines $x$ via the top formula in \eqref{combo}.
\end{proof}
\begin{proof}[Proof of (2)] By part (1), equality \eqref{klj1} holds
for any solution $x$ (if exists) to the equation \eqref{4.25f}.
If $\beta\in[\alpha]$, then $\X(\beta)=0$ and \eqref{klj1} amounts to \eqref{4.25ha},
which completes the proof of the ``only if" part.
To prove the ``if" part, we start with the general formula
$$
(L_\alpha f)(z)=\sum_{j=0}^{\deg f-1}\frac{(-1)^{j}}{(j+1)!}\, (z-\alpha)^{j}f^{(j+1)}(z)
$$
relating the left backward shift $L_\alpha$ of a polynomial $f$ with its formal derivatives.
Applying the latter formula to $f=\X \gamma$ gives
$$
L_\alpha (\X\gamma)=\sum_{j=0}^{\kappa-1}\frac{(-1)^{j}}{(j+1)!}\, \bp_\alpha^j\X^{(j+1)}\gamma,\quad\mbox{where}\quad \kappa=\deg\X.
$$
Assuming that \eqref{4.25ha} is in force, we evaluate both sides of the last equality at $\beta$ on the right and arrive at
\begin{align}
0&=\sum_{j=0}^{\kappa-1}\frac{(-1)^{j}}{(j+1)!}\big(\bp_\alpha^j\X^{(j+1)}\gamma\big)^{\br}(\beta)\notag\\
&=(\X^\prime\gamma)^{\br}(\beta)+
\sum_{j=1}^{\kappa-1}\frac{(-1)^{j}}{(j+1)!}\big(\bp_\alpha \bp_\alpha^{j-1}\X^{(j+1)}\gamma\big)^{\br}(\beta).
\label{klj3}
\end{align}
Since $\X^{(j)}\in Z_{\mathbb F}[z]$ for all $j\ge 0$ and since
$$
(\bp_\alpha \,  f)^{\br}(\beta)=f^{\br}(\beta)\beta-\alpha f^{\br}(\beta)\quad\mbox{for all}\quad f\in\mathbb F[z],
$$
we can write \eqref{klj3} equivalently as
$$
\gamma\X^\prime(\beta)=\sum_{j=1}^{\kappa-1}\frac{(-1)^{j}}{(j+1)!}\left(\alpha\big(\bp_\alpha^{j-1}\X^{(j+1)}\gamma\big)^{\br}(\beta)
-\big(\bp_\alpha^{j-1}\X^{(j+1)}\gamma\big)^{\br}(\beta)\beta\right).
$$
Since $\X^\prime(\beta)\neq 0$ and $\X^\prime(\beta)\beta=\beta \X^\prime(\beta)$,
we can divide both sides of the last equality by $\X^\prime(\beta)$
on the right and write the resulting equality as $\gamma=\alpha x_0-x_0\beta$, where
\begin{equation}
x_0=\sum_{j=1}^{\kappa-1}\frac{(-1)^{j}}{(j+1)!}\big(\bp_\alpha^{j-1}\X^{(j+1)}\gamma\big)^{\br}(\beta)\cdot \X^\prime(\beta)^{-1}.
\label{klm}
\end{equation}
The latter means that $x_0$ is a solution to the equation \eqref{4.25f}. A  more detailed formula for $x_0$ as in \eqref{combo}
follows upon plugging in the equalities
$$
\big(\bp_\alpha^{j-1}\X^{(j+1)}\gamma\big)^{\br}(\beta)=\sum_{i=0}^{j-1}(-1)^{i}\cbm{j-1 \\ i}
\alpha^i\gamma \X^{(j+1)}(\beta)\beta^{j-i-1}.
$$
into the right side of \eqref{klm}. Combining $x_0$ with 
the general solution $\varphi$ of the homogeneous Sylvester equation $\alpha x-x\beta=0$ gives \eqref{gsyl}.
\end{proof}
\begin{remark}
{\rm If $\beta$ is algebraic and $\alpha\not\sim\beta$, one can multiply the identity
$\; x\bp_\beta-\bp_\alpha x=\gamma$ by $R_\beta \mathcal X_{_{[\beta]}}=L_\beta \mathcal X_{_{[\beta]}}$ on the right
and evaluate the resulting identity at $\alpha$ on the left to get an alternative formula for $\Psi_{\alpha,\beta}(\gamma)$ in case
$\alpha\not\in\beta$.}
\label{R:alt}
\end{remark}
\begin{remark}
{\rm The case where $\alpha$ and $\beta$ are both transcendental is more subtle. An example in \cite{berg} shows that
even with $\alpha\not\sim\beta$, the equation \eqref{4.25f} may have no solutions. We are not aware
of explicit solvability or uniqueness criteria for the transcendental case. For this reason, our further results on
the two-sided problem \eqref{1.18}, \eqref{1.19} are established under the (certainly restrictive) assumption that either 
all left or all right interpolation nodes are algebraic.}
\label{R:alta}
\end{remark}
\begin{theorem}
Let us assume that the set $\Lambda=\{\alpha_1,\ldots,\alpha_n\}$ is algebraic over $Z_{\mathbb F}$ and left $P$-independent, whereas the set $\Omega=\{\beta_1,\ldots,\beta_k\}$
is right $P$-independent. The two-sided Lagrange problem \eqref{1.18}, \eqref{1.19} has a solution if and only if 
\begin{equation}
\big(L_{\alpha_i}\cX_{[\alpha_i]}c_i\big)^{\br}(\beta_j)=\big(L_{\alpha_i}\cX_{[\alpha_i]}d_j\big)^{\br}(\beta_j), \quad \mbox{whenever}\quad \alpha_i\sim\beta_j.
\label{4.25hab}
\end{equation}
In this case, all $f\in P_{n+k}(\mathbb F)$ satisfying conditions \eqref{1.18}, \eqref{1.19} are given by the formula 
\begin{align}
f(z)=\sum_{i=1}^n p_i(z)p_i^{\bl}(\alpha_i)^{-1}c_i&+P_{\Lambda,\boldsymbol\ell}(z)\cdot\sum_{i=1}^n\sum_{j=1}^k
p_i^{\bl}(\alpha_i)^{-1}\Psi_{\alpha_i,\beta_j}(c_i-d_j)q_j^{\br}(\beta_j)^{-1}q_j(z)\notag\\
&+P_{\Lambda,\boldsymbol\ell}(z)\cdot\sum_{\alpha_i\sim\beta_j}
p_i^{\bl}(\alpha_i)^{-1}\varphi_{ij}q_j^{\br}(\beta_j)^{-1}q_j(z),
\label{brap1}
\end{align}
where  $p_i=P_{\Lambda\backslash\{\alpha_i\},\boldsymbol\ell}$, $q_j=P_{\Omega\backslash\{\beta_j\},{\bf r}}$, where $\Psi_{\alpha_i,\beta_j}(c_i-d_j)$ 
are defined via formulas \eqref{combo}, and where 
$\varphi_{ij}$ is any intertwiner of $\alpha_i$ and $\beta_j$ (i.e., $\alpha_i\varphi_{ij}=\varphi_{ij}\beta_j$).
\label{T:4.1}
\end{theorem}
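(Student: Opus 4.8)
The plan is to reduce everything to the scalar Sylvester analysis already in hand. By Proposition \ref{P:nhoma}, under the standing hypotheses \eqref{ap17} the formula \eqref{bap1} is a bijection between $nk$-tuples $\{\psi_{ij}\}$ solving the decoupled Sylvester equations \eqref{5.3b}, namely $\alpha_i\psi_{ij}-\psi_{ij}\beta_j=c_i-d_j$, and all $f\in P_{n+k}(\mathbb F)$ satisfying \eqref{1.18}, \eqref{1.19}. Hence the two-sided problem is solvable if and only if the system \eqref{5.3b} is, and the solution set of the former is the image under \eqref{bap1} of the solution set of the latter. Since $\Lambda$ is algebraic over $Z_{\mathbb F}$, each $\alpha_i$ is algebraic with minimal central polynomial $\cX_{[\alpha_i]}$, so Proposition \ref{P:klj} applies to each equation of \eqref{5.3b} separately.

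First I would dispose of the pairs $(i,j)$ with $\beta_j\not\in[\alpha_i]$: part (1) of Proposition \ref{P:klj} says the $(i,j)$-th equation has, for every choice of $c_i,d_j$, the unique solution $\psi_{ij}=\Psi_{\alpha_i,\beta_j}(c_i-d_j)$ (top line of \eqref{combo}), with no constraint on the data. Next I would handle the pairs with $\alpha_i\sim\beta_j$, i.e. $\beta_j\in[\alpha_i]$ (so $\cX_{[\alpha_i]}=\cX_{[\beta_j]}$): by part (2), the $(i,j)$-th equation is solvable exactly when $\big(L_{\alpha_i}\cX_{[\alpha_i]}(c_i-d_j)\big)^{\br}(\beta_j)=0$. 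Because the map $\gamma\mapsto\big(L_{\alpha_i}\cX_{[\alpha_i]}\gamma\big)^{\br}(\beta_j)$ is additive — immediate from \eqref{2.2}, \eqref{2.3} and the fact that $\cX_{[\alpha_i]}$ has central coefficients — this is precisely the condition \eqref{4.25hab}. When \eqref{4.25hab} holds, the complete solution set of that equation is $\psi_{ij}=\Psi_{\alpha_i,\beta_j}(c_i-d_j)+\varphi_{ij}$, with $\Psi_{\alpha_i,\beta_j}$ the bottom line of \eqref{combo} and $\varphi_{ij}$ ranging over all intertwiners of $\alpha_i$ and $\beta_j$. Combining the two cases, the decoupled system \eqref{5.3b} is solvable if and only if \eqref{4.25hab} holds, which, via Proposition \ref{P:nhoma}, is the asserted solvability criterion for the two-sided problem.

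For the parametrization I would substitute the description of the general solution tuple into \eqref{bap1}: set $\psi_{ij}=\Psi_{\alpha_i,\beta_j}(c_i-d_j)$ for every $(i,j)$ and add the free summand $\varphi_{ij}$ only over those pairs with $\alpha_i\sim\beta_j$. Splitting the inner double sum of \eqref{bap1} accordingly produces exactly the two sums displayed in \eqref{brap1}. Since \eqref{bap1} is a bijection onto the $P_{n+k}(\mathbb F)$-solutions of \eqref{1.18}, \eqref{1.19}, letting the $\varphi_{ij}$ range over all intertwiners yields \emph{all} such interpolants, which is the second assertion.

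I do not anticipate a genuine obstacle: the substance is carried entirely by Propositions \ref{P:nhoma} and \ref{P:klj}. The two points that merely require care are the identification of the per-equation solvability condition \eqref{4.25ha} (written in terms of $c_i-d_j$) with the stated criterion \eqref{4.25hab} (written separately in $c_i$ and $d_j$), which is the additivity remark above, and the bookkeeping distinguishing the index pairs that carry a free intertwiner parameter from those that do not.
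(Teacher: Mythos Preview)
Your proposal is correct and follows essentially the same route as the paper: reduce via Proposition~\ref{P:nhoma} to the decoupled Sylvester equations \eqref{5.3b}, apply Proposition~\ref{P:klj} to each (distinguishing $\alpha_i\not\sim\beta_j$ from $\alpha_i\sim\beta_j$), use additivity of $\gamma\mapsto\big(L_{\alpha_i}\cX_{[\alpha_i]}\gamma\big)^{\br}(\beta_j)$ to rewrite the solvability condition as \eqref{4.25hab}, and substitute the general solution $\psi_{ij}=\Psi_{\alpha_i,\beta_j}(c_i-d_j)+\varphi_{ij}$ (with $\varphi_{ij}=0$ when $\alpha_i\not\sim\beta_j$) into \eqref{bap1} to obtain \eqref{brap1}. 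The paper's proof is the same argument, stated a bit more tersely.
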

\begin{proof}
By Proposition \ref{P:nhoma}, the problem \eqref{1.18}, \eqref{1.19} has a solution if and only if each Sylvester equation in \eqref{5.3b} is solvable.
This is the case for each non-conjugate pair $\alpha_i\not\sim\beta_j$, by part (1) in Proposition \ref{P:klj}. If $\alpha_i\sim\beta_j$, then the corresponding 
Sylvester equation in \eqref{5.3b} has a solution if and only if \eqref{4.25ha} holds with $\alpha=\alpha_i$, $\beta=\beta_j$ and $\gamma=c_i-d_j$, that is,
$$
\big(L_{\alpha_i}\cX_{[\alpha_i]}(c_i-d_j)\big)^{\br}(\beta_j)=0.
$$
Since $L_{\alpha_i}$ and $\br(\beta_j)$ are additive on $\mathbb F[z]$, the latter equality is equivalent to \eqref{4.25hab}. Again, by Proposition \ref{P:nhoma},
all $f\in  P_{n+k}(\mathbb F)$ satisfying conditions \eqref{1.18}, \eqref{1.19} are given by the formula \eqref{brap1} where $\psi_{ij}$ is any solution 
to the respective Sylvester equation \eqref{5.3b}. By Proposition \ref{P:klj}, $\psi_{ij}=\Psi_{\alpha_i,\beta_j}(c_i-d_j)+\varphi_{ij}$ where 
$\alpha_i\varphi_{ij}=\varphi_{ij} \beta_j$. Combining the latter representations with \eqref{brap1} gives \eqref{brap1}. The third sum on the right side
is taken over all conjugate pairs $\alpha_i\sim\beta_j$ as for non-conjugate pairs $\alpha_i\not\sim\beta_j$, we have $\varphi_{ij}=0$, by part (1) in Proposition  \ref{P:klj}.
\end{proof}
\begin{corollary}
Under the assumptions of Theorem \ref{T:4.1}, a  polynomial $g\in\mathbb P_{n+k}(\mathbb F)$
satisfies conditions $g^{\bl}\vert_{\Lambda}=0$ and $g^{\br}\vert_\Omega=0$ if and only if it is of the form
\begin{equation}
g(z)=P_{\Lambda,\boldsymbol\ell}(z)\cdot\sum_{\alpha_i\sim\beta_j}p_i^{\bl}(\alpha_i)^{-1}\varphi_{ij}q_j^{\br}(\beta_j)^{-1}q_j(z)
\label{ap22}
\end{equation}
where  $p_i=P_{\Lambda\backslash\{\alpha_i\},\boldsymbol\ell}$, $q_j=P_{\Omega\backslash\{\beta_j\},{\bf r}}$ and where $\varphi_{ij}$ is any
intertwiner of $\alpha_i$ and $\beta_j$.
\label{C:4.2}
\end{corollary}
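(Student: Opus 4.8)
The plan is to read off Corollary \ref{C:4.2} as the homogeneous specialization of Theorem \ref{T:4.1}. I would apply that theorem with $c_i=0$ for all $i=1,\dots,n$ and $d_j=0$ for all $j=1,\dots,k$: with this data the conditions \eqref{1.18}, \eqref{1.19} read exactly $g^{\bl}\vert_\Lambda=0$ and $g^{\br}\vert_\Omega=0$, and the solvability condition \eqref{4.25hab} becomes the identity $0=0$, hence holds automatically (equivalently, the zero polynomial already solves the problem). Theorem \ref{T:4.1} then asserts that every $g\in P_{n+k}(\mathbb F)$ with $g^{\bl}\vert_\Lambda=g^{\br}\vert_\Omega=0$ is given by \eqref{brap1} for some choice of intertwiners $\varphi_{ij}$, and conversely that every polynomial of that form is such a $g$; both implications of the corollary are thus already contained in that one assertion.

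What remains is to simplify the right-hand side of \eqref{brap1} for zero data. The first sum $\sum_{i=1}^n p_i(z)p_i^{\bl}(\alpha_i)^{-1}c_i$ is $0$ since every $c_i=0$. For the middle double sum I would observe that $\gamma\mapsto\Psi_{\alpha_i,\beta_j}(\gamma)$ is additive in $\gamma$: in both branches of \eqref{combo} each summand carries exactly one factor $\gamma$, and $L_{\alpha_i}$, right evaluation at $\beta_j$, and multiplication of $\cX_{[\alpha_i]}$ by $\gamma$ are all additive. Hence $\Psi_{\alpha_i,\beta_j}(c_i-d_j)=\Psi_{\alpha_i,\beta_j}(0)=0$ and the whole middle sum disappears. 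Only the third sum of \eqref{brap1} survives, and it is precisely the right-hand side of \eqref{ap22}; this completes the reduction.

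Since the argument is merely a specialization of results already in hand, I do not expect a genuine obstacle; the only points calling for a line of explanation are the additivity of $\Psi_{\alpha_i,\beta_j}$ (whence $\Psi_{\alpha_i,\beta_j}(0)=0$) and the triviality of \eqref{4.25hab} for zero target values. If one preferred a proof not invoking Theorem \ref{T:4.1}, one could instead check directly that any $g$ as in \eqref{ap22} satisfies $g^{\bl}\vert_\Lambda=0$ (immediate from $g\in P_{\Lambda,\boldsymbol\ell}\cdot\mathbb F[z]$, using \eqref{2.3a} and $P_{\Lambda,\boldsymbol\ell}^{\bl}(\alpha_i)=0$) and $g^{\br}\vert_\Omega=0$ (writing $P_{\Lambda,\boldsymbol\ell}=p_i\cdot\bp_{\widetilde{\alpha}_i}$ as in \eqref{5.3c}, using the intertwining relation $\alpha_i\varphi_{ij}=\varphi_{ij}\beta_j$ to move $\bp_{\widetilde{\alpha}_i}$ past $p_i^{\bl}(\alpha_i)^{-1}\varphi_{ij}$ and thereby produce a factor $\bp_{\beta_j}$ on the right, and then the right-sided analogue of \eqref{5.3c} to recognize $g\in\mathbb F[z]\cdot P_{\Omega,{\bf r}}$), and then argue separately that \eqref{ap22} already exhausts the homogeneous solution set inside $P_{n+k}(\mathbb F)$; in that alternative route the bookkeeping at the conjugate pairs $\alpha_i\sim\beta_j$ is the delicate step, so the route through Theorem \ref{T:4.1} is preferable.
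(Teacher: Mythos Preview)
Your proposal is correct and follows exactly the paper's approach: the paper's proof consists of the single sentence ``The formula \eqref{ap22} follows upon letting $c_i=d_j=0$ for all $i,j$ in \eqref{brap1},'' which is precisely your specialization argument. Your added remarks on why $\Psi_{\alpha_i,\beta_j}(0)=0$ and why \eqref{4.25hab} is trivially satisfied merely make explicit what the paper leaves tacit.
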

The formula \eqref{ap22} follows upon letting $c_i=d_j=0$ for all $i,j$ in \eqref{brap1}. We next observe from the division algorithm that 
upon adding the term $P_{\Lambda,\boldsymbol\ell}hP_{\Omega,{\bf r}}$ on the right side of \eqref{ap22}
and letting $h$ to run through $\mathbb F[z]$, leads to a parametrization of the set of all polynomials $f\in\mathbb F[z]$ solving the homogeneous problem
\eqref{1.18}, \eqref{1.19}, that is, the intersection
of two (left and right) ideals $\langle P_{\Delta,{\boldsymbol\ell}}\rangle_{\bf r}\cap\langle P_{\Delta,{\bf r}}\rangle_{\boldsymbol\ell}$
(a quasi-ideal) of $\mathbb F[z]$, in the terminology of \cite{stein2}). 
\subsection{Two-sided $P$-independence} The property of an algebraic set $\Delta\subset \mathbb F$ to be left (right) $P$-independent 
can be characterized as follows (see the proof Theorem \ref{T:1.1}): {\em there is no nonzero polynomial 
$f\in P_{_{|\Delta|}}(\mathbb F)$ such that 
$f^{\bl}\vert_{\Delta}=0$ (respectively, $f^{\br}\vert_{\Delta}=0$)}. Combining the latter characterizations, we say that 
\begin{definition}
The pair $(\Delta_{\boldsymbol\ell},\Delta_{\bf r})$ consisting of two algebraic sets $\Delta_{\boldsymbol\ell}$ and 
$\Delta_{\bf r}$
is $P$-independent if there is no nonzero  $g\in P_{_{|\Delta_{\boldsymbol\ell}|+|\Delta_{\bf r}|}}(\mathbb F)$
such that
\begin{equation}
g^{\bl}\vert_{\Delta_{\boldsymbol\ell}}=0\quad\mbox{and}\quad g^{\br}\vert_{\Delta_{\bf r}}=0.
\label{opana}
\end{equation}
\noindent
{\rm Since the minimal polynomials $P_{\Lambda,\boldsymbol\ell}$, $P_{\Omega, {\bf r}}$ of (algebraic) sets $\Delta_{\boldsymbol\ell}$ ,
$\Delta_{\bf r}$ satisfy inequalities $\deg P_{\Delta_{\boldsymbol\ell},\boldsymbol\ell}\le |\Delta_{\boldsymbol\ell}|$, 
 $\deg P_{\Delta_{\bf r},{\bf r}}\le |\Delta_{\bf r}|$, whereas their product
$g=P_{\Delta_{\boldsymbol\ell},\boldsymbol\ell}\cdot P_{\Delta_{\bf r}, {\bf r}}$ satisfies conditions \eqref{opana}, we conclude 
(by Definition \ref{D:1.1}) that if the pair  $(\Delta_{\boldsymbol\ell},\Delta_{\bf r})$ is $P$-independent (and hence, 
$\deg(P_{\Delta_{\boldsymbol\ell},\boldsymbol\ell}\cdot P_{\Delta_{\bf r},{\bf r}})\ge 
|\Delta_{\boldsymbol\ell}|+|\Delta_{\bf r}|$), then $\Delta_{\boldsymbol\ell}$ and $\Delta_{\bf r}$ are respectively, 
left and right $P$-independent. In the case where at least one of them is algebraic over $Z_{\mathbb F}$, we can say more. 
Given a set $\Delta\subset\mathbb F$ we will denote by $\left[\Delta\right]:=
\bigcup_{\alpha\in\Delta}[\alpha]$ the minimal superset of $\Delta$ closed under conjugation.}
\label{D:1}
\end{definition}
\begin{proposition}
Let us assume that $\Delta_{\boldsymbol\ell}$ is algebraic over $Z_{\mathbb F}$. Then the pair $(\Delta_{\boldsymbol\ell},\Delta_{\bf r})$
is $P$-independent if and only if $\Delta_{\boldsymbol\ell}$ is left $P$-independent, $\Delta_{\bf r}$ is 
right $P$-independent, and $\left[\Delta_{\boldsymbol\ell}\right]\cap \left[\Delta_{\bf r}\right]=\varnothing$.
\label{P:ap12}
\end{proposition}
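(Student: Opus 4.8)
The plan is to read off both implications directly from Corollary~\ref{C:4.2}. Applied with $\Lambda=\Delta_{\boldsymbol\ell}$ and $\Omega=\Delta_{\bf r}$, that corollary — valid precisely under the hypotheses ``$\Delta_{\boldsymbol\ell}$ algebraic and left $P$-independent, $\Delta_{\bf r}$ right $P$-independent'' — describes the whole solution set inside $P_{n+k}(\mathbb F)$ (with $n=|\Delta_{\boldsymbol\ell}|$, $k=|\Delta_{\bf r}|$) of the homogeneous two-sided problem $g^{\bl}\vert_{\Delta_{\boldsymbol\ell}}=0$, $g^{\br}\vert_{\Delta_{\bf r}}=0$ as the polynomials of the form \eqref{ap22}. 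Since \eqref{ap22} is a sum indexed by the conjugate pairs $\alpha_i\sim\beta_j$ with $\alpha_i\in\Delta_{\boldsymbol\ell}$, $\beta_j\in\Delta_{\bf r}$, $P$-independence of the pair (that is, this solution set being $\{0\}$) will come down to that index set being empty, which is exactly the condition $\left[\Delta_{\boldsymbol\ell}\right]\cap\left[\Delta_{\bf r}\right]=\varnothing$.

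For the forward implication I would first invoke the observation recorded right after Definition~\ref{D:1}: if the pair $(\Delta_{\boldsymbol\ell},\Delta_{\bf r})$ is $P$-independent, then $\Delta_{\boldsymbol\ell}$ is left $P$-independent and $\Delta_{\bf r}$ is right $P$-independent, for otherwise the nonzero product $P_{\Delta_{\boldsymbol\ell},\boldsymbol\ell}\cdot P_{\Delta_{\bf r},{\bf r}}$ would be of degree $<n+k$ and would still satisfy \eqref{opana}. Hence Corollary~\ref{C:4.2} applies. Assuming for contradiction that $\left[\Delta_{\boldsymbol\ell}\right]\cap\left[\Delta_{\bf r}\right]\neq\varnothing$, I pick $\alpha\in\Delta_{\boldsymbol\ell}$ and $\beta\in\Delta_{\bf r}$ lying in one conjugacy class; conjugacy supplies a nonzero $\varphi$ with $\alpha\varphi=\varphi\beta$, and feeding this $\varphi$ into \eqref{ap22} for the pair $(\alpha,\beta)$ and $0$ into every other slot yields $g=P_{\Delta_{\boldsymbol\ell},\boldsymbol\ell}\cdot p^{\bl}(\alpha)^{-1}\varphi\,q^{\br}(\beta)^{-1}q$ with $p=P_{\Delta_{\boldsymbol\ell}\backslash\{\alpha\},\boldsymbol\ell}$, $q=P_{\Delta_{\bf r}\backslash\{\beta\},{\bf r}}$. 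This $g$ lies in $P_{n+k}(\mathbb F)$ and satisfies \eqref{opana}, and it is nonzero because $\mathbb F[z]$ has no zero divisors, $P_{\Delta_{\boldsymbol\ell},\boldsymbol\ell}$ and $q$ are monic, and the constant $p^{\bl}(\alpha)^{-1}\varphi\,q^{\br}(\beta)^{-1}$ is nonzero — contradicting $P$-independence of the pair.

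For the reverse implication I assume $\Delta_{\boldsymbol\ell}$ is left $P$-independent, $\Delta_{\bf r}$ is right $P$-independent, and $\left[\Delta_{\boldsymbol\ell}\right]\cap\left[\Delta_{\bf r}\right]=\varnothing$; then Corollary~\ref{C:4.2} applies and every $g\in P_{n+k}(\mathbb F)$ with $g^{\bl}\vert_{\Delta_{\boldsymbol\ell}}=0$ and $g^{\br}\vert_{\Delta_{\bf r}}=0$ is of the form \eqref{ap22}, but disjointness of the conjugation-closures means no $\alpha_i\in\Delta_{\boldsymbol\ell}$ is conjugate to any $\beta_j\in\Delta_{\bf r}$, so the sum in \eqref{ap22} is empty and $g=0$. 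There is essentially no obstacle here once Corollary~\ref{C:4.2} is in hand; the only points that require a word are the passage from pair $P$-independence to the two one-sided $P$-independences (already recorded after Definition~\ref{D:1}) and the elementary fact that conjugate elements admit a nonzero intertwiner, which is what guarantees that \eqref{ap22} produces a genuine nonzero witness when $\left[\Delta_{\boldsymbol\ell}\right]$ and $\left[\Delta_{\bf r}\right]$ meet.
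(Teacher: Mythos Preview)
Your proof is correct and follows essentially the same route as the paper: both arguments reduce the question to Corollary~\ref{C:4.2}, using the observation after Definition~\ref{D:1} for the one-sided $P$-independences and then reading off the conjugacy condition from the index set in \eqref{ap22}. Your version is in fact slightly more explicit in the forward direction, spelling out why the single-term polynomial built from a nonzero intertwiner is itself nonzero (monic factors, no zero divisors in $\mathbb F[z]$), whereas the paper simply asserts that \eqref{ap22} producing only the zero polynomial forces every intertwiner to vanish.
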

\begin{proof}
As we have already observed, if $(\Delta_{\boldsymbol\ell},\Delta_{\bf r})$ is $P$-independent, then 
$\Delta_{\boldsymbol\ell}$ and $\Delta_{\bf r}$ are left and right $P$-independent and therefore, 
contain finitely many elements. Hence we may let $\Delta_{\boldsymbol\ell}=\Lambda$ and $\Delta_{\bf r}=\Omega$ as in \eqref{1.7}. 
It remains to show that under the assumptions as in Theorem \ref{T:4.1}, the pair $(\Lambda,\Omega)$ is $P$-independent if and only if 
 $\left[\Lambda\right]\cap \left[\Omega\right]=\varnothing$. The latter follows by Corollary \ref{C:4.2}. Indeed, the formula 
\eqref{ap22} produces all polynomials $g\in P_{n+k}(\mathbb F)$ satisfying conditions \eqref{opana}. By Definition \ref{D:1},
the $(\Lambda,\Omega)$ is $P$-independent if and only if any $g$ of the form \eqref{ap22} is the zero polynomial, which means that
the only $x\in\mathbb F$ subject to $\alpha_i x=x\beta_j$ is $x=0$, i.e., that 
$\alpha_i\not\sim\beta_j$ for all $\alpha_i\in\Lambda$ and $\beta_j\in\Omega$.
\end{proof}
The next statement can be considered as a two-sided analog of Theorem  \ref{T:1.1}.
\begin{theorem}
Given two sets $\Lambda$ and $\Omega$ as in \eqref{1.7}, let us assume that $\Lambda$ is algebraic over $Z_{\mathbb F}$. Then
the problem \eqref{1.18}, \eqref{1.19} has a solution in $P_{n+k}(\mathbb F)$ for any $c_i,d_j\in \mathbb F$ if and only if the pair
$(\Lambda,\Omega)$ is $P$-independent.
In this case, a unique $f\in P_{n+k}(\mathbb F)$ subject to conditions \eqref{1.18}, \eqref{1.19} is given by 
\begin{equation}
f(z)=\sum_{i=1}^n P_{{\Lambda}\backslash\{\alpha_i\},{\boldsymbol\ell}}(z)\cdot \rho_i\cdot
P_{\Omega,{\bf r}}(z)+\sum_{j=1}^k P_{\Lambda,{\boldsymbol\ell}}(z)\cdot \gamma_j\cdot P_{\Omega\backslash\{\beta_j\},{\bf r}}(z)
\label{ma1}
\end{equation}
where the elements $\rho_i,\gamma_j\in\mathbb F$ are defined by
\begin{align}
\rho_i&=-\sum_{j=1}^k P_{{\Lambda}\backslash\{\alpha_i\},{\boldsymbol\ell}}^{\bl}(\alpha_i)^{-1}
\Psi_{\alpha_i,\beta_j}(c_i)
P_{\Omega\backslash\{\beta_j\},{\bf r}}^{\br}(\beta_j)^{-1},\label{ma2}\\
\gamma_j&=\sum_{i=1}^n  P_{{\Lambda}\backslash\{\alpha_i\},{\boldsymbol\ell}}^{\bl}(\alpha_i)^{-1}
\Psi_{\alpha_i,\beta_j}(d_j)
P_{\Omega\backslash\{\beta_j\},{\bf r}}^{\br}(\beta_j)^{-1},
\label{ma3}
\end{align}
whereas $\Psi_{\alpha_i,\beta_j}(c_i)$ and $\Psi_{\alpha_i,\beta_j}(d_j)$ are defined via the top formula in \eqref{combo}.
\label{T:ap2}
\end{theorem}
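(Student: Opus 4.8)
The plan is to derive Theorem~\ref{T:ap2} from Theorem~\ref{T:4.1}, Proposition~\ref{P:ap12} and Proposition~\ref{P:nhoma}, after a short reduction to the situation \eqref{ap17}. That reduction runs both ways. If the problem \eqref{1.18}, \eqref{1.19} is solvable in $P_{n+k}(\mathbb F)$ for every choice of targets, then taking all $d_j=0$ shows that the left subproblem \eqref{1.18} is solvable in $\mathbb F[z]$ for all $c_1,\dots,c_n$, which by the consistency conditions \eqref{ap2} forces $\deg P_{\Lambda,\boldsymbol\ell}=n$; taking all $c_i=0$ forces $\deg P_{\Omega,{\bf r}}=k$ in the same way. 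Conversely, if $(\Lambda,\Omega)$ is $P$-independent, then the monic polynomial $P_{\Lambda,\boldsymbol\ell}P_{\Omega,{\bf r}}$, which has degree $\le n+k$ and satisfies \eqref{opana}, must by Definition~\ref{D:1} have degree $\ge n+k$, whence again $\deg P_{\Lambda,\boldsymbol\ell}=n$ and $\deg P_{\Omega,{\bf r}}=k$. So in either direction \eqref{ap17} holds, and since $\Lambda$ is algebraic over $Z_{\mathbb F}$, Proposition~\ref{P:ap12} identifies $P$-independence of the pair $(\Lambda,\Omega)$ with the condition $[\Lambda]\cap[\Omega]=\varnothing$, i.e.\ $\alpha_i\not\sim\beta_j$ for all $i,j$.

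The equivalence is then quick. If $[\Lambda]\cap[\Omega]=\varnothing$, the compatibility relations \eqref{4.25hab} are vacuous, so Theorem~\ref{T:4.1} produces a solution in $P_{n+k}(\mathbb F)$ for all $c_i,d_j$, and uniqueness in $P_{n+k}(\mathbb F)$ is immediate from Definition~\ref{D:1} (equivalently, the parametrizing sum in Corollary~\ref{C:4.2} is empty). If instead $\alpha_i\sim\beta_j$ for some pair, fix a nonzero intertwiner $\varphi$ with $\alpha_i\varphi=\varphi\beta_j$ and take $c_i=\varphi$, $d_j=0$, all other targets $0$. The relevant Sylvester equation $\alpha_i x-x\beta_j=\varphi$ has no solution: by Proposition~\ref{P:klj}(2) it is solvable iff $\bigl(L_{\alpha_i}\cX_{[\alpha_i]}\varphi\bigr)^{\br}(\beta_j)=0$, whereas Proposition~\ref{P:lprod} (with $g=\cX_{[\alpha_i]}$, which left-vanishes at $\alpha_i$) together with \eqref{4.3}, \eqref{ma11}, \eqref{ma10}, and the intertwining $\alpha_i\varphi=\varphi\beta_j$, yields $\bigl(L_{\alpha_i}\cX_{[\alpha_i]}\varphi\bigr)^{\br}(\beta_j)=\bigl(L_{\alpha_i}\cX_{[\alpha_i]}\bigr)^{\br}(\alpha_i)\,\varphi=\cX_{[\alpha_i]}^\prime(\alpha_i)\,\varphi\neq0$. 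By Proposition~\ref{P:nhoma} the two-sided problem is then unsolvable for this data, so it cannot be solvable for all $c_i,d_j$; hence $P$-independence of $(\Lambda,\Omega)$ is necessary.

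It remains to recognize the solution as the polynomial \eqref{ma1}. Under $P$-independence every $\Psi_{\alpha_i,\beta_j}(\gamma)$ is given by the top formula in \eqref{combo}, solves $\alpha_i x-x\beta_j=\gamma$, and is additive in $\gamma$. I would verify directly that the $f$ of \eqref{ma1}--\eqref{ma3} lies in $P_{n+k}(\mathbb F)$ (clear from the degrees of the minimal polynomials) and satisfies \eqref{1.18} and \eqref{1.19}, uniqueness then coming from Definition~\ref{D:1} once more. For \eqref{1.18}: the second sum in \eqref{ma1} left-vanishes on $\Lambda$ since it carries $P_{\Lambda,\boldsymbol\ell}$ as a left factor, while in the first sum only the $i=s$ term survives left-evaluation at $\alpha_s$ (because $p_i^{\bl}(\alpha_s)=0$ for $i\neq s$, by \eqref{2.6}). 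Using the factorization \eqref{5.3c} $P_{\Lambda,\boldsymbol\ell}=p_i\bp_{\widetilde{\alpha}_i}$ with the slide $\bp_{\widetilde{\alpha}_i}p_i^{\bl}(\alpha_i)^{-1}=p_i^{\bl}(\alpha_i)^{-1}\bp_{\alpha_i}$, its right-hand counterpart $P_{\Omega,{\bf r}}=\bp_{\widetilde{\beta}_j}q_j$ with $q_j^{\br}(\beta_j)^{-1}\bp_{\widetilde{\beta}_j}=\bp_{\beta_j}q_j^{\br}(\beta_j)^{-1}$, the Sylvester relation $\bp_{\alpha_i}\Psi_{\alpha_i,\beta_j}(c_i)=\Psi_{\alpha_i,\beta_j}(c_i)\bp_{\beta_j}-c_i$, and the right-hand analog $\sum_j q_j^{\br}(\beta_j)^{-1}q_j\equiv1$ of \eqref{5.3d}, one rewrites $p_s\rho_s P_{\Omega,{\bf r}}$ in the form $P_{\Lambda,\boldsymbol\ell}\cdot(\cdots)+p_s p_s^{\bl}(\alpha_s)^{-1}c_s$; left-evaluation at $\alpha_s$ kills the first summand and returns $c_s$. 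Conditions \eqref{1.19} follow by the mirror-image computation, now using \eqref{5.3d} itself; the same manipulations also identify \eqref{ma1} with the solution \eqref{brap1} delivered by Theorem~\ref{T:4.1}.

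The step I expect to be the main obstacle is this last verification: it is a chain of noncommutative rearrangements --- sliding scalar factors through the linear polynomials $\bp_{\widetilde{\alpha}_i},\bp_{\widetilde{\beta}_j}$, trading a left factor $\bp_{\alpha_i}$ for a right factor $\bp_{\beta_j}$ via the Sylvester identity, and collapsing the double sums through the two partitions of unity --- each of which must be performed in the correct order and on the correct side, so keeping the left/right bookkeeping straight is the delicate point. (The necessity argument above is short but likewise hinges on combining Proposition~\ref{P:lprod} with the minimal-central-polynomial identities \eqref{ma11} and \eqref{ma10} correctly.)
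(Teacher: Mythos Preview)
Your argument is correct and follows the same overall architecture as the paper: reduce to \eqref{ap17}, invoke Proposition~\ref{P:ap12} to identify two-sided $P$-independence with $[\Lambda]\cap[\Omega]=\varnothing$, use Theorem~\ref{T:4.1} for sufficiency, exhibit an obstruction for necessity, and then pin down the explicit formula. The two places where your tactics diverge from the paper are worth noting.

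For the necessity direction, you build a concrete counterexample by choosing $c_i=\varphi$ (a nonzero intertwiner) and computing $\bigl(L_{\alpha_i}\cX_{[\alpha_i]}\varphi\bigr)^{\br}(\beta_j)=\cX_{[\alpha_i]}'(\alpha_i)\varphi\neq0$ via the chain \eqref{Lprod}, the intertwining relation, \eqref{4.3}, \eqref{ma11}, \eqref{ma10}. The paper instead argues abstractly: if universal solvability held with $\alpha_i\sim\beta_j$, then \eqref{4.25hab} would force $\bigl(L_{\alpha_i}\cX_{[\alpha_i]}d\bigr)^{\br}(\beta_j)=0$ for every $d$, hence (via \eqref{2.7}) $L_{\alpha_i}\cX_{[\alpha_i]}$ right-vanishes on the whole class $[\alpha_i]$, contradicting $\deg L_{\alpha_i}\cX_{[\alpha_i]}<\deg\cX_{[\alpha_i]}$. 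Your version is more constructive; the paper's avoids choosing a specific intertwiner.

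For the formula \eqref{ma1}, you propose a direct verification by the slide-and-Sylvester manipulations you describe. The paper takes a shorter conceptual route: it applies Theorem~\ref{T:4.1} (in the form \eqref{ap1a}) separately to each ``elementary'' problem \eqref{ma4} and \eqref{ma5}, observes that in each case all but one of the $\Psi$-terms vanish so the unique low-degree solution collapses to a single product $P_{\Lambda,\boldsymbol\ell}\gamma_jP_{\Omega\backslash\{\beta_j\},{\bf r}}$ or $P_{\Lambda\backslash\{\alpha_i\},\boldsymbol\ell}\rho_iP_{\Omega,{\bf r}}$, and then sums. Uniqueness in $P_{n+k}(\mathbb F)$ does the rest. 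This sidesteps the noncommutative bookkeeping you (rightly) flag as delicate: rather than manipulating \eqref{ma1} into \eqref{brap1} by hand, one recognizes each summand of \eqref{ma1} as an instance of \eqref{brap1}/\eqref{ap1a} already certified by Theorem~\ref{T:4.1}. Your direct computation works, but the paper's decomposition is cleaner and reuses Theorem~\ref{T:4.1} rather than reproving a special case of it.
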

\begin{proof}
If the problem \eqref{1.18}, \eqref{1.19} has a solution for any choice of left and right target values, then $\Lambda$ is left $P$-independent
and $\Omega$ is  right $P$-independent (by Theorem \ref{T:1.1}). To complete the proof of the "only if" statement,
it remains (due to Proposition \ref{P:ap12}) to show that $\left[\Lambda\right]\cap \left[\Omega\right]=\varnothing$.
To argue via contradiction, let us assume that $\alpha_i\sim\beta_j$ (for some $i,j$). Then, by condition \eqref{4.25hab} in Theorem \ref{T:4.1}, we have
$$
\big(L_{\alpha_i}\cX_{[\alpha_i]}c\big)^{\br}(\beta_j)=\big(L_{\alpha_i}\cX_{[\alpha_i]}d\big)^{\br}(\beta_j)\quad\mbox{for any}\quad c,d\in\mathbb F.
$$
Letting $c=0$, we then conclude, by formula \eqref{2.7}, that
$$
0=\big(L_{\alpha_i}\cX_{[\alpha_i]}d\big)^{\br}(\beta_j)=\big(L_{\alpha_i}\cX_{[\alpha_i]}\big)^{\br}(d^{-1}\beta_j d)\cdot d\quad\mbox{for any}\quad d\in\mathbb F.
$$
The latter means that the polynomial $L_{\alpha_i}\cX_{[\alpha_i]}$ takes zero right value at any element in the conjugacy class $\alpha_i]$ and hence,
belongs to the ideal $\langle \cX_{\alpha_i}\rangle$, which is impossible, as $\deg L_{\alpha_i}\cX_{[\alpha_i]}<\deg \cX_{[\alpha_i]}$ and
$L_{\alpha_i}\cX_{[\alpha_i]}\not\equiv 0$. This completes the proof of the "only if" part. The converse implication follows from Theorems
\ref{T:1.1} and \ref{T:4.1}. 

\smallskip

A unique  low-degree solution to the problem \eqref{1.18}, \eqref{1.19} is given by the formula \eqref{brap1}, which, as will now show, 
can be written in the form \eqref{ma1}. To this end, we first observe that for each $j\in\{1,\ldots,k\}$,
Theorem \ref{T:4.1} applies to the interpolation problem 
\begin{equation}
f^{\br}(\beta_j)=d_t,\quad f^{\br}\vert_{\Omega\backslash\{\beta_j\}}=0,\quad f^{\bl}\vert_{\Lambda}=0.
\label{ma4}
\end{equation}
With the target values as above (that is, with $c_i=d_t=0$ for all $i$ and $t\neq j$), the formula \eqref{combo} gives 
$\Psi_{\alpha_i,\beta_t}(c_i-d_t)=0$ for all $t\neq j$. Hence, the formula \eqref{brap1} takes the form     
\begin{equation}
f_{{\bf r},j}(z)=P_{\Lambda,{\boldsymbol\ell}}(z)\cdot \gamma_j\cdot P_{\Omega\backslash\{\beta_j\},{\bf r}}(z),
\label{5.15}
\end{equation}
with $\gamma_j$ defined as in \eqref{ma3}. By Theorem \ref{T:4.1}, $f_{{\bf r},j}$ is a unique polynomial in $P_{n+k}(\mathbb F)$
satisfying conditions \eqref{ma4}. Similarly, by applying Theorem \ref{T:4.1} to the interpolation problem
\begin{equation}
f^{\bl}(\alpha_i)=c_i,\quad f^{\bl}\vert_{\Lambda\backslash\{\alpha_i\}}=0,\quad f^{\br}\vert_{\Omega}=0,
\label{ma5}
\end{equation}
and adapting the formula \eqref{ap1a} to the present case, we conclude that a unique polynomial in $P_{n+k}(\mathbb F)$
subject to conditions \eqref{ma5} is given by the formula 
\begin{equation}
f_{{\boldsymbol \ell},i}(z)=P_{{\Lambda}\backslash\{\alpha_i\},{\boldsymbol\ell}}(z)\cdot \rho_i\cdot P_{\Omega,{\bf r}}(z),
\label{5.19}
\end{equation}
with $\rho_i$ defined as in \eqref{ma2}. Combining \eqref{ma4} and \eqref{ma5} we see that the formula 
\eqref{ma1} defines a polynomial $f\in P_{n+k}(\mathbb F)$ satisfying conditions \eqref{1.18}, \eqref{1.19}. 
By the uniqueness part in Theorem \ref{T:4.1}), this is the same polynomial as in \eqref{brap1}.
\end{proof}
\begin{remark}
{\rm A decomposition of a low-degree solution to the Lagrange problem into the sum of ``elementary"
polynomials each of which satisfies one requisite interpolation condition and equals zero at all other interpolation nodes,
is commonly termed as the Lagrange interpolation formula. For this reason, the formula \eqref{ma1} (rather than \eqref{brap1} or \eqref{ap1a}) can be referred
to as to the {\em two-sided Lagrange interpolation formula}. Other examples (commutative, left, right) are provided by respective formulas
\eqref{1.2}, \eqref{1.16}, \eqref{1.16r}.} 
\label{R:bal}
\end{remark}
\subsection{Interpolation within an algebraic conjugacy class} Let us assume that the sets $\Lambda$ and 
$\Omega$ in \eqref{1.7} are respectively, left and right $P$-independent, and moreover, that they are contained in the same algebraic conjugacy class $V$. 
By Theorem \ref{T:4.1}, the problem \eqref{1.18}, \eqref{1.19} has a solution if and only equalities \eqref{4.25hab} hold for all $i\in\{1,\ldots,n\}$ 
and $j\in\{1,\ldots,k\}$, in which case all $f\in P_{n+k}(\mathbb F)$ solving the problem are given by the formula \eqref{brap1}, where 
$\Psi_{\alpha_i,\beta_j}(c_i-d_j)$ is defined by the bottom formula in \eqref{combo} for all $i,j$. 
\begin{remark}
{\rm In the present case, the parametrization formula \eqref{brap1} cannot be written in the form of the Lagrange interpolation formula \eqref{ma1} 
since the polynomials $f_{{\bf r},j}$ and $f_{{\boldsymbol\ell},i}$ solving ``elementary" interpolation problems \eqref{ma4} and \eqref{ma5} may not exist.
By the general criterion \eqref{4.25ha}, these polynomials do exist if and only if
\begin{equation}
\big(L_{\alpha_i}\cX_{_{V}} c_i\big)^{\br}(\beta_j)=\big(L_{\alpha_i}\cX_{_{V}}d_j\big)^{\br}(\beta_j)=0\quad
\mbox{for all} \quad \alpha_i\in\Lambda, \; \beta_j\in\Omega,
\label{jun4}
\end{equation}
that is, if and only if the elements $c_i\beta_j c_i^{-1}$ and $d_j\beta_j d_j^{-1}$ are
right zeros of the polynomial $L_{\alpha_i}\cX_{_{V}}$ for all $i,j$ such that $c_i\neq 0$ and $d_j\neq 0$.
(note that if one of the conditions \eqref{jun4} holds, then all other conditions hold as well).
In this case, a particular solution $f\in P_{n+k}(\mathbb F)$ to the problem \eqref{1.18}, \eqref{1.19}
is given by the formulas \eqref{ma1}-\eqref{ma3}, where $\Psi_{\alpha_i,\beta_j}(c_i)$ and $\Psi_{\alpha_i,\beta_j}(d_j)$ are defined via the bottom 
formula in \eqref{combo} rather the top one.}
\label{R:jun2}
\end{remark}
Equalities \eqref{4.25hab} guarantee the consistency of interpolation conditions \eqref{1.18} and \eqref{1.19}. Via these equalities,
the left target values impose certain restrictions on the right ones (and vice versa) and in general, none of them can be eliminated as redundant.
The case where $\Lambda$ (or $\Omega$) is a $P$-basis for $V$ is more rigid.
\begin{proposition}
If $\Lambda$ is a left $P$-basis for $V$, then {\rm (1)} $d_j$'s are uniquely determined from \eqref{4.25hab} and {\rm (2)} any polynomial $f\in\mathbb F[z]$ satisfying 
left conditions \eqref{1.18}, automatically satisfies right conditions \eqref{1.19}. Similar statements hold if $\Omega$ is a right $P$-basis for $V$.
\label{P:aut}
\end{proposition}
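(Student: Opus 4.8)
The plan is to read both assertions off the extension Lemma~\ref{L:ext}. Since $\Lambda$ is a left $P$-basis for the algebraic conjugacy class $V$, one has $P_{\Lambda,{\boldsymbol\ell}}=\cX_{_V}$ and $n:=|\Lambda|=\deg\cX_{_V}$, and every $\alpha_i$ is conjugate to every $\beta_j$ (both sets lie in $V$), so condition \eqref{4.25hab} is imposed for all $i,j$. First I would dispose of (2): if $f\in\mathbb F[z]$ satisfies \eqref{1.18}, then formula \eqref{6.3r} of Lemma~\ref{L:ext} (applied with $\Delta=\Lambda$, $\gamma=\beta_j$, $p_i=P_{\Lambda\backslash\{\alpha_i\},{\boldsymbol\ell}}$) gives
$$
f^{\br}(\beta_j)=\sum_{i=1}^{n}\bigl(p_i\cdot p_i^{\bl}(\alpha_i)^{-1}c_i\bigr)^{\br}(\beta_j)=:\widehat d_j ,
$$
a quantity depending only on the nodes and on $c_1,\dots,c_n$; in particular every solution of \eqref{1.18} attains the same right values $\widehat d_j$ on $\Omega$. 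To see that $\widehat d_j$ is admissible, I would apply \eqref{4.4} to such an $f$: it shows that $\psi_{ij}:=(L_{\alpha_i}f)^{\br}(\beta_j)$ solves the Sylvester equation $\alpha_i x-x\beta_j=c_i-\widehat d_j$, and the ``only if'' direction of Proposition~\ref{P:klj}(2) then gives $\bigl(L_{\alpha_i}\cX_{_V}(c_i-\widehat d_j)\bigr)^{\br}(\beta_j)=0$, which by additivity of $L_{\alpha_i}$ and of $\br(\beta_j)$ is precisely \eqref{4.25hab} with $d_j=\widehat d_j$. Thus \eqref{4.25hab} is solvable; granting the uniqueness below, its unique solution is $d_j=\widehat d_j$, which is (1), and then every $f$ satisfying \eqref{1.18} also satisfies $f^{\br}(\beta_j)=\widehat d_j=d_j$, i.e. \eqref{1.19}, which is (2).

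The main obstacle is the uniqueness in (1), i.e. that for each fixed $j$ the additive map $\gamma\mapsto\bigl(\bigl(L_{\alpha_i}\cX_{_V}\gamma\bigr)^{\br}(\beta_j)\bigr)_{i=1}^{n}$ is injective on $\mathbb F$; two solutions of \eqref{4.25hab} differ by a $\gamma$ in its kernel, which must then vanish. So suppose $\bigl(L_{\alpha_i}\cX_{_V}\gamma\bigr)^{\br}(\beta_j)=0$ for $i=1,\dots,n$. By Proposition~\ref{P:klj}(2) there exist $x_i\in\mathbb F$ with $\alpha_i x_i-x_i\beta_j=\gamma$. I would form the left Lagrange polynomial $P(z)=\sum_{i=1}^{n}p_i(z)p_i^{\bl}(\alpha_i)^{-1}x_i$ from formula \eqref{1.16} with target values $x_i$ (so $\deg P<n$ and $P^{\bl}(\alpha_s)=x_s$), and set $g:=P\cdot\bp_{\beta_j}$. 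Then $g^{\br}(\beta_j)=0$ since $g\in\langle\bp_{\beta_j}\rangle_{\boldsymbol\ell}$, while \eqref{2.6} gives $g^{\bl}(\alpha_s)=\alpha_s x_s-x_s\beta_j=\gamma$ for every $s$. Hence $g-\gamma$ (with $\gamma$ read as a constant polynomial) left-vanishes on $\Lambda$ and has degree at most $n=\deg\cX_{_V}$, so by \eqref{2.4u} and $P_{\Lambda,{\boldsymbol\ell}}=\cX_{_V}$ we get $g-\gamma=\cX_{_V}\cdot w_0$ with $w_0\in\mathbb F$ a constant; evaluating on the right at $\beta_j$ the left side is $-\gamma$ whereas the right side vanishes (by \eqref{2.7}, using $\cX_{_V}\in Z_{\mathbb F}[z]$ and $\cX_{_V}(\beta_j)=0$), so $\gamma=0$. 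This is the step that genuinely uses the $P$-basis hypothesis: with $n<\deg\cX_{_V}$ one can no longer conclude $g=\gamma$ from ``$g-\gamma\in\langle\cX_{_V}\rangle$ and $\deg(g-\gamma)\le n$'', and uniqueness indeed fails in general. (A quicker alternative: if all $d_j$ are admissible, Theorem~\ref{T:4.1} supplies $f\in P_{n+k}(\mathbb F)$ realizing $(c_i),(d_j)$, and the formula for $\widehat d_j$ forces $d_j=\widehat d_j$.)

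Finally, the statements for a right $P$-basis $\Omega$ I would get by the left--right mirror image: the right-sided version of Lemma~\ref{L:ext} shows that any $f$ satisfying \eqref{1.19} has its left values on $\Lambda$ determined by $d_1,\dots,d_k$ alone, and for the uniqueness of the $c_i$'s one reruns the construction with the right Lagrange polynomial $Q(z)=\sum_{j=1}^{k}x_j q_j^{\br}(\beta_j)^{-1}q_j(z)$ from \eqref{1.16r} (with $q_j=P_{\Omega\backslash\{\beta_j\},{\bf r}}$) in place of $P$ and $g=\bp_{\alpha_i}\cdot Q$ in place of $P\cdot\bp_{\beta_j}$, now using $P_{\Omega,{\bf r}}=\cX_{_V}$ so that $g+\gamma\in\langle\cX_{_V}\rangle$ has degree at most $\deg\cX_{_V}$.
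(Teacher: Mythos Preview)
Your proof is correct, but the uniqueness argument for (1) takes a genuinely different route from the paper's. The paper proceeds by a single direct computation: given any $d_j$ satisfying \eqref{4.25hab}, it picks (via Proposition~\ref{P:klj}(2)) elements $\psi_{ij}$ with $\alpha_i\psi_{ij}-\psi_{ij}\beta_j=c_i-d_j$, writes the constant $c_i$ as the polynomial $d_j+\psi_{ij}\bp_{\beta_j}-\bp_{\alpha_i}\psi_{ij}$ (which collapses to $c_i$ since $z$ is central), and then expands
\[
\sum_{i=1}^n\bigl(p_i\cdot p_i^{\bl}(\alpha_i)^{-1}c_i\bigr)^{\br}(\beta_j)
\]
term by term, using the partition-of-unity identity \eqref{5.3d} and the relation $p_i\cdot p_i^{\bl}(\alpha_i)^{-1}\bp_{\alpha_i}=\cX_{_V}\cdot p_i^{\bl}(\alpha_i)^{-1}$ to see that the $\psi$-terms vanish and the whole sum equals $d_j$. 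Thus any admissible $d_j$ is forced to be your $\widehat d_j$, and part (2) follows at once from \eqref{6.3r}. Your ``quicker alternative'' (invoke Theorem~\ref{T:4.1} to produce an $f$ and read off $d_j=f^{\br}(\beta_j)=\widehat d_j$) is essentially this argument wrapped in a black box.

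Your main route is instead an injectivity argument: you show the map $\gamma\mapsto\bigl((L_{\alpha_i}\cX_{_V}\gamma)^{\br}(\beta_j)\bigr)_{i}$ has trivial kernel by building $g=P\cdot\bp_{\beta_j}$ from the Lagrange interpolant $P$ of the Sylvester solutions $x_i$, and then forcing $g-\gamma\in\langle\cX_{_V}\rangle$ by a degree count. This is a bit longer and uses more machinery (the Sylvester solutions, the Lagrange interpolant, and the degree bound $\deg(g-\gamma)\le n$), but it is self-contained, makes the role of the $P$-basis hypothesis transparent, and gives a reusable injectivity statement. The paper's computation is shorter and more algebraically direct; yours is more modular.
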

\begin{proof} By part (2) in Proposition \ref{P:klj}, relations
\eqref{4.25hab} guarantee the existence of elements $\psi_{ij}\in\mathbb F$ subject to equations \eqref{5.3b}. Multiplying both sides of \eqref{5.3c}
by $p_i^{\bl}(\alpha_i)$ on the right and taking into account the definition
of $\widetilde{\alpha}_i$ in \eqref{5.3c}, we get 
$$
p_i\cdot p_i^{\bl}(\alpha_i)^{-1}\bp_{\alpha_i}=P_{\Lambda,{\boldsymbol\ell}}\cdot p_i^{\bl}(\alpha_i)^{-1}\quad\mbox{for}\quad i=1,\ldots, n.
$$
Since  in the present case, $P_{\Lambda,{\boldsymbol\ell}}=\cX_{_{V}}\in Z_{\mathbb F}[z]$ and $\beta_j\in V$, we have
$$
(p_i\cdot  p_i^{\bl}(\alpha_i)^{-1}\psi_{ij}\big)^{\br}(\beta_j)=p_i^{\bl}(\alpha_i)^{-1}\psi_{ij}\cdot\cX_{_{V}}(\beta_j)=0.
$$
Making use of the latter equalities along with \eqref{5.3d} and \eqref{4.25hab}, we get
\begin{align}
\sum_{i=1}^n \big( p_i\cdot p_i^{\bl}(\alpha_i)^{-1}
c_i\big)^{\br}(\beta_j)
&=\sum_{i=1}^n \big(p_i \cdot p_i^{\bl}(\alpha_i)^{-1}
(d_j+\psi_{ij}\cdot\bp_{\beta_j}-\bp_{\alpha_i}\cdot\psi_{ij}\big)^{\br}(\beta_j)\notag\\
&=d_j-\sum_{i=1}^n \big( p_i\cdot  p_i^{\bl}(\alpha_i)^{-1}\psi_{ij}\big)^{\br}(\beta_j)=d_j.\label{aut1}
\end{align}
By the formula \eqref{6.3r} in Lemma \ref{L:ext} (with $m=n$, $\gamma_i=\alpha_i$ and 
$f^{\bl}(\alpha_i)=c_i$ for $i=1,\ldots,n$), if $f$ satisfies conditions \eqref{1.18}, then $f^{\br}(\beta_j)$ is defined by the expression 
on the left side of \eqref{aut1}, i.e., conditions $f^{\br}(\beta_j)=d_j$ are satisfied automatically.
\end{proof}
Thus, if $\Lambda$ is a left $P$-basis for $V$, conditions \eqref{1.19} can be dismissed leaving us with a left-sided problem \eqref{1.18}. 
More generally, if $\Lambda$ {\em contains} a left $P$-basis for a conjugacy class $V$, then all right sided conditions at $\beta_j\in V$ can be 
dismissed as redundant. Similar observations apply to the case where $\Omega$ contains a right $P$-basis for some conjugacy class.
Note that without the above dismissal, one can still use the parametrization formula \eqref{brap1}, which now takes the form 
$$
f(z)=\sum_{i=1}^n p_i(z)p_i^{\bl}(\alpha_i)^{-1}c_i+\cX_{_{V}}(z)\cdot h(z),\qquad h\in P_{k}(\mathbb F),
$$
and produces all polynomials $f\in P_{n+k}(\mathbb F)$ subject to the left conditions \eqref{1.18}. 
\subsection{Generalized Lagrange interpolation formula} 
As we observed in Remark \ref{R:jun2}, the Lagrange interpolation formula \eqref{ma1} may not exist if $[\Lambda]\cap[\Omega]\neq\varnothing$.
However, it is possible to decompose a low-degree solution to the problem into the sum of ``elementary" polynomials each one of which satisfies 
the required interpolation conditions within one conjugacy class and vanishes at all interpolation nodes outside this class. 
To be more precise, let $V_1,\ldots,V_m$ be all conjugacy classes in $\mathbb F$ 
having non-empty intersection with both $\Lambda$ and $\Omega$. Letting
$$
\Lambda_0=\Lambda\backslash \left[\Omega\right],\quad \Omega_0=\Omega\backslash \left[\Lambda\right],\quad
\Lambda_s:=V_s\cap \Lambda,\quad \Omega_s:=V_s\cap \Omega\quad (s=1,\ldots,m)
$$
we arrive at the partitions $\Lambda=\bigcup_{s=0}^m\Lambda_s\;$ and $\; \Omega=\bigcup_{s=0}^m\Omega_s$ of the sets $\Lambda$ and $\Omega$. 
By the {\em generalized Lagrange formula}, we mean 
a representation of a low-degree solution $f\in P_{n+k}(\mathbb F)$ to the problem \eqref{1.18}, \eqref{1.19} in the form
\begin{equation}
f=\sum_{\alpha_i\in\Lambda_0} P_{{\Lambda}\backslash\{\alpha_i\},{\boldsymbol\ell}}\cdot \rho_i\cdot
P_{\Omega,{\bf r}}+\sum_{\beta_j\in\Omega_0} P_{\Lambda,{\boldsymbol\ell}}\cdot \gamma_j\cdot P_{\Omega\backslash\{\beta_j\},{\bf r}}
+\sum_{s=1}^m P_{\Lambda\backslash \Lambda_s,\boldsymbol\ell}\cdot g_s \cdot P_{\Omega\backslash \Omega_s,{\bf r}}
\label{ma54}
\end{equation}
for some $\rho_i,\gamma_j\in\mathbb F$ and $g_s\in P_{|\Lambda_s|+|\Omega_s|}(\mathbb F)$. Note that in case $[\Lambda]\cap[\Omega]=\varnothing$, the
formula \eqref{ma54} amounts to \eqref{ma1}. The polynomials 
\begin{equation}
f_{{\boldsymbol \ell},i}=P_{{\Lambda}\backslash\{\alpha_i\},{\boldsymbol\ell}}\cdot \rho_i\cdot P_{\Omega,{\bf r}}, \; \; 
f_{{\bf r},j}=P_{\Lambda,{\boldsymbol\ell}}\cdot \gamma_j\cdot P_{\Omega\backslash\{\beta_j\},{\bf r}}, \; \; 
f_{_{V_s}}=P_{\Lambda\backslash \Lambda_s,\boldsymbol\ell}\cdot g_s \cdot P_{\Omega\backslash \Omega_s,{\bf r}}
\label{ma56}
\end{equation}
on the right side of \eqref{ma54} clearly satisfy the following homogeneous conditions 
\begin{align}
&f_{{\boldsymbol\ell},i}^{\bl}(\alpha)=0, \quad f_{{\boldsymbol\ell},i}^{\bl}(\beta)=0\quad  
\mbox{for all}\; \; \alpha\in\Lambda\backslash\{\alpha_i\}, \; \beta\in\Omega,\label{5.20u}\\
&f_{{\bf r},j}^{\bl}(\alpha)=0, \quad f_{{\bf r},j}^{\br}(\beta)=0\quad
\mbox{for all}\; \; \alpha\in\Lambda, \; \beta\in\Omega\backslash\{\beta_j\},\label{5.20v}\\
&f_{_{V_s}}^{\bl}(\alpha)=0, \quad f_{_{V_s}}^{\br}(\beta)=0\quad 
\mbox{for all}\; \; \alpha\in\Lambda\backslash\Lambda_s, \; \beta\in\Omega\backslash\Omega_s.\notag
\end{align}
Therefore, for $f$ of the form \eqref{ma54}, we have 
$$
f^{\bl}(\alpha_i)=f_{{\boldsymbol\ell},i}^{\bl}(\alpha_i) \; \; \mbox{for} \; \alpha_i\in\Lambda_0, \quad 
f^{\br}(\beta_j)=f_{{\bf r},j}^{\br}(\beta_j)\; \; \mbox{for} \; \beta_j\in\Omega_0,\quad\mbox{and}
$$
$$
f^{\bl}(\alpha_i)=f_{_{V_s}}^{\bl}(\alpha_i),\quad f^{\br}(\beta_j)=f_{_{V_s}}^{\br}(\beta_j)\quad\mbox{for}\quad \alpha_i,\beta_j\in V_s; \; \; s=1,\ldots,m.
$$
To make sure that $f$ of the form \eqref{ma54} satisfies interpolation conditions \eqref{1.18}, \eqref{1.19}, it remains to appropriately 
specify the elements $\rho_i$, $\gamma_j$ and the polynomials $g_s$ in \eqref{ma54}. The elements  $\rho_i$, $\gamma_j$ are defined uniquely
by formulas \eqref{5.18} and \eqref{5.14} below (which are alternative to formulas \eqref{ma2} and \eqref{ma3}), whereas $g_s$ is any polynomial 
in $P_{|\Lambda_s|+|\Omega_s|}(\mathbb F)$ solving a two-sided problem \eqref{5.20y} below (at modified interpolation nodes \eqref{5.9u} in $V_s$ and 
modified target values \eqref{5.22}, \eqref{5.21}). Here we will use an approach based on left and right $\lambda$-transforms introduced 
and studied in \cite{lamler13, lamler2}. We assume that all interpolation nodes are algebraic and lay out some extra notation.

\smallskip

For a polynomial $g\in\mathbb F[z]$, we denote by $\mathfrak D_g\in Z_{\mathbb F}[z]$
the greatest central divisor of $g$, i.e., the generator of the smallest two-sided ideal containing 
$\langle g\rangle_{\boldsymbol\ell}$ or $\langle g\rangle_{\bf r}$. We will denote by $\mathfrak Q_g$ a unique polynomial
such that 
$$
g=\mathfrak D_g \mathfrak Q_g=\mathfrak Q_g\mathfrak D_g.
$$
A polynomial $g$ is called {\em bounded}
if there exists a central multiple of $g$, in which case we will denote by $\mathfrak M_g\in Z_{\mathbb F}[z]$ the {\em least 
central multiple} of $g$ (the generator of the largest two-sided ideal contained in $\langle g\rangle_{\boldsymbol\ell}$ 
or in $\langle g\rangle_{\bf r}$). We will denote by $g^\diamondsuit$  a unique polynomial
such that  
$$\mathfrak M_g=gg^\diamondsuit=g^\diamondsuit g.$$ 
From these definitions, it is readily seen that 
\begin{equation}
g^\diamondsuit =(\mathfrak Q_g)^\diamondsuit,\quad (g^{\diamondsuit})^{\diamondsuit}=\mathfrak Q_g,\quad 
\mathfrak D_g \mathfrak M_{g^\diamondsuit}=\mathfrak D_g \mathfrak Q_g(\mathfrak Q_g)^\diamondsuit=\mathfrak M_{g}.
\label{4.38}
\end{equation}
Following \cite{lamler2}, we associate with a given polynomial $h\in\mathbb F[z]$ and an element $\beta\in\mathbb F$ two self-maps of $\mathbb F\backslash\{0\}$
(left and right $\lambda_{h,\beta}$-transforms)
$$
\delta\mapsto (\delta h)^{\bl}(\beta)=\delta\cdot h^{\bl}(\delta^{-1}\beta\delta)\quad\mbox{and}\quad
\delta\mapsto (h\delta)^{\br}(\beta)=h^{\br}(\delta\beta\delta^{-1})\cdot\delta.
$$
The formulas for inverse transformations are presented in the next lemma.
\begin{lemma}
Given $\beta\in\mathbb F$ and bounded $h\in\mathbb F[z]$ such that $\mathfrak M_h(\beta)\neq 0$,
\begin{align}
&d=(h\delta)^{\br}(\beta)\; \; \Leftrightarrow \; \;
\delta=(h^{\diamondsuit}d)^{\br}(\beta)\cdot\mathfrak M_h(\beta)^{-1};
\label{5.12} \\
&d=(\delta h)^{\bl}(\beta) \; \; \Leftrightarrow \; \;
\delta=\mathfrak M_h(\beta)^{-1}\cdot (dh^{\diamondsuit})^{\bl}(\beta)
\label{5.13}
\end{align}
for any $d,\delta\in\mathbb F\backslash\{0\}$.
\label{L:7.1}
\end{lemma}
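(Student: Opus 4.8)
The plan is to verify both equivalences by a direct computation that uses only the product formulas \eqref{2.6}, \eqref{2.7} for left and right evaluations, the identities $\mathfrak M_h=hh^\diamondsuit=h^\diamondsuit h\in Z_{\mathbb F}[z]$ together with \eqref{4.38}, and the conjugation-equivariance $g(\tau\gamma\tau^{-1})=\tau g(\gamma)\tau^{-1}$ valid for any central $g$ and any $\tau\neq0$. I would prove \eqref{5.12} in full; the proof of \eqref{5.13} is its mirror image, obtained by interchanging left and right evaluations and using \eqref{2.6} in place of \eqref{2.7}. The one preparatory observation is about non-vanishing: since $\mathfrak M_h=h^\diamondsuit h$ lies in the left ideal $\langle h\rangle_{\boldsymbol\ell}$ and $\mathfrak M_h=hh^\diamondsuit$ lies in $\langle h^\diamondsuit\rangle_{\boldsymbol\ell}$, every right zero of $h$ or of $h^\diamondsuit$ is a right zero of $\mathfrak M_h$, hence an honest zero of $\mathfrak M_h$ (the latter being central). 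Therefore the hypothesis $\mathfrak M_h(\beta)\neq0$ forces the whole conjugacy class $[\beta]$ to be disjoint from the right-zero sets of both $h$ and $h^\diamondsuit$; in particular $(h\delta)^{\br}(\beta)=h^{\br}(\delta\beta\delta^{-1})\,\delta\neq0$ for every $\delta\neq0$, so the right $\lambda_{h,\beta}$-transform (and likewise the one built from $h^\diamondsuit$) is a self-map of $\mathbb F\backslash\{0\}$ and all the applications of \eqref{2.7} below are legitimate. I also record that $\mathfrak M_h(\beta)$ commutes with $\beta$, being a polynomial in $\beta$ with central coefficients.

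For the forward implication in \eqref{5.12}, suppose $d=(h\delta)^{\br}(\beta)$ and put $\gamma=\delta\beta\delta^{-1}$, so that $d=h^{\br}(\gamma)\delta$, whence $h^{\br}(\gamma)=d\delta^{-1}\neq0$ and $d\beta d^{-1}=h^{\br}(\gamma)\,\gamma\,h^{\br}(\gamma)^{-1}$. Treating $d$ as a constant polynomial and applying \eqref{2.7} (with $g=h^\diamondsuit$, $f=d$), then applying \eqref{2.7} once more to $\mathfrak M_h=h^\diamondsuit h$ at the point $\gamma$, one gets
$$
(h^\diamondsuit d)^{\br}(\beta)=(h^\diamondsuit)^{\br}(d\beta d^{-1})\,d
=(h^\diamondsuit)^{\br}\!\big(h^{\br}(\gamma)\,\gamma\,h^{\br}(\gamma)^{-1}\big)\,h^{\br}(\gamma)\,\delta=\mathfrak M_h(\gamma)\,\delta .
$$
Since $\gamma\in[\beta]$, equivariance gives $\mathfrak M_h(\gamma)=\delta\,\mathfrak M_h(\beta)\,\delta^{-1}$, hence $(h^\diamondsuit d)^{\br}(\beta)=\delta\,\mathfrak M_h(\beta)$, which is the second equation in \eqref{5.12}. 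For the reverse implication, start from $\delta=(h^\diamondsuit d)^{\br}(\beta)\,\mathfrak M_h(\beta)^{-1}$; by \eqref{2.7} this reads $\delta=(h^\diamondsuit)^{\br}(\nu)\,d\,\mathfrak M_h(\beta)^{-1}$ with $\nu=d\beta d^{-1}$, and $(h^\diamondsuit)^{\br}(\nu)\neq0$ because $\nu\in[\beta]$. Using that $\mathfrak M_h(\beta)$ commutes with $\beta$, one finds $\delta\beta\delta^{-1}=(h^\diamondsuit)^{\br}(\nu)\,\nu\,(h^\diamondsuit)^{\br}(\nu)^{-1}$, so that \eqref{2.7} applied to $\mathfrak M_h=hh^\diamondsuit$ at $\nu$ yields
$$
(h\delta)^{\br}(\beta)=h^{\br}\!\big((h^\diamondsuit)^{\br}(\nu)\,\nu\,(h^\diamondsuit)^{\br}(\nu)^{-1}\big)\,\delta
=\mathfrak M_h(\nu)\,(h^\diamondsuit)^{\br}(\nu)^{-1}\,\delta=\mathfrak M_h(\nu)\,d\,\mathfrak M_h(\beta)^{-1},
$$
and $\mathfrak M_h(\nu)=d\,\mathfrak M_h(\beta)\,d^{-1}$ (again $\nu\in[\beta]$, $\mathfrak M_h$ central) collapses the right-hand side to $d$. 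This establishes \eqref{5.12}, and \eqref{5.13} follows by the symmetric computation with \eqref{2.6}.

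The only point that really requires care — and the main obstacle — is the bookkeeping of non-vanishing: each invocation of \eqref{2.7} needs its pivot element to avoid the relevant zero set, and all of these requirements are secured at one stroke by $\mathfrak M_h(\beta)\neq0$ through the first paragraph. Once that is in place, the two equivalences reduce to routine manipulations with the product rules \eqref{2.7}, \eqref{2.6} and with the centrality of $\mathfrak M_h$; note also that these computations show simultaneously that the right-hand equations in \eqref{5.12}, \eqref{5.13} make sense for $d,\delta\in\mathbb F\backslash\{0\}$, so the stated biconditionals are well-posed.
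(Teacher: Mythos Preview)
Your proof is correct. Both directions of \eqref{5.12} go through exactly as you compute, and the non-vanishing argument in your first paragraph cleanly justifies every application of \eqref{2.7}.

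The paper's argument is organized a bit differently, and the comparison is worth noting. For the forward implication the paper uses \eqref{2.3b} rather than \eqref{2.7}: since $d=(h\delta)^{\br}(\beta)$, one has $(h^{\diamondsuit}d)^{\br}(\beta)=(h^{\diamondsuit}\cdot(h\delta)^{\br}(\beta))^{\br}(\beta)=(h^{\diamondsuit}h\delta)^{\br}(\beta)=(\mathfrak M_h\delta)^{\br}(\beta)=\delta\,\mathfrak M_h(\beta)$, the last step because $\mathfrak M_h\in Z_{\mathbb F}[z]$. This avoids tracking the intermediate conjugate $\gamma=\delta\beta\delta^{-1}$ altogether. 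For the reverse implication, the paper does not redo the computation but \emph{bootstraps}: it reads $\delta\,\mathfrak M_h(\beta)=(h^{\diamondsuit}d)^{\br}(\beta)$ as an instance of the forward implication with $h$ replaced by $h^{\diamondsuit}$, obtaining $d=(h^{\diamondsuit\diamondsuit}\,\delta\,\mathfrak M_h(\beta))^{\br}(\beta)\cdot\mathfrak M_{h^{\diamondsuit}}(\beta)^{-1}$, and then invokes the identities \eqref{4.38} (namely $h^{\diamondsuit\diamondsuit}=\mathfrak Q_h$ and $\mathfrak D_h\mathfrak M_{h^{\diamondsuit}}=\mathfrak M_h$) together with the factorization $h=\mathfrak Q_h\mathfrak D_h$ to close up. Your route instead carries out a second, symmetric direct computation via \eqref{2.7}; this is slightly longer but has the virtue that it never uses the finer structure in \eqref{4.38}---only $\mathfrak M_h=hh^{\diamondsuit}=h^{\diamondsuit}h$ and centrality---so in that sense your argument is marginally more self-contained.
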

\begin{proof}
If $d=(h\delta)^{\br}(\beta)$, then by the formula \eqref{2.7}
(with $f=h^\diamondsuit$ and $g=h\delta$) we have
$$
(h^{\diamondsuit}d)^{\br}(\beta)=(h^\diamondsuit h\delta)^{\br}(\beta)=\delta\cdot\mathfrak M_h(\beta),
$$
where the second equality holds since $\mathfrak M_h\in Z_{\mathbb F}[z]$. Since $\mathfrak M_h(\beta)\neq 0$, the latter formula
implies the formula for $\delta$ in \eqref{5.12} proving the
implication $\Rightarrow$ in \eqref{5.12}.  For the reverse implication, write the second equality in \eqref{5.12} equivalently as
$$
\delta\cdot\mathfrak M_h(\beta)=(h^{\diamondsuit}d)^{\br}(\beta).
$$
We then apply the implication $\Rightarrow$ (just proven) to the latter equality (i.e.,
to $h^\diamondsuit$, $d$ and $\delta\cdot\mathfrak M_h(\beta)$ rather than $h$, $\delta$ and $d$) and then make use 
of the second and the third relations in \eqref{4.38} to get
\begin{equation}
d=(h^{\diamondsuit\diamondsuit}\delta)^{\br}(\beta)\cdot \mathfrak M_h(\beta)\cdot\mathfrak M_{h^\diamondsuit}(\beta)^{-1}
=(\mathfrak Q_h \delta)^{\br}(\beta)\cdot\mathfrak D_h(\beta).
\label{po21}
\end{equation}
Taking into account that $\mathfrak D_h\in Z_{\mathbb F}[z]$ and that $\mathfrak D_h(\beta)$ commutes with $\beta$,
we use the formula \eqref{2.7} to compute
\begin{align*}
(h\delta)^{\br}(\beta)=(\mathfrak Q_h \mathfrak D_h\delta)^{\br}(\beta)
&=\mathfrak Q_h^{\br}\big(\delta \mathfrak D_h(\beta)\beta \mathfrak D_h(\beta)^{-1}\delta^{-1}\big)\cdot \delta \cdot \mathfrak D_h(\beta)\\
&=\mathfrak Q_h^{\br}\big(\delta\beta\delta^{-1}\big)\cdot \delta \cdot \mathfrak D_h(\beta)=(\mathfrak Q_h\delta)^{\br}(\beta)
\cdot \mathfrak D_h(\beta),
\end{align*}
which together with \eqref{po21} implies $d=(h\delta)^{\br}(\beta)$, thus
completing the proof of \eqref{5.12}. The equivalence \eqref{5.13} is verified similarly.
\end{proof}
\noindent
We next apply Lemma \ref{L:7.1} to get the formulas for the elements $\rho_i, \gamma_j$ and to specify polynomials $g_s$ in \eqref{ma54}.
\begin{lemma}
{\rm (1)} If $\alpha_i\in\Lambda_0=\Lambda\backslash\left[\Omega\right]$, then the polynomial  
$f_{{\boldsymbol \ell},i}=P_{{\Lambda}\backslash\{\alpha_i\},{\boldsymbol\ell}}\rho_i P_{\Omega,{\bf r}}$ satisfies 
$f_{{\boldsymbol \ell},i}^{\bl}(\alpha_i)=c_i$ if and only if
\begin{equation}
\rho_i=\left\{\begin{array}{ccc}P_{\Lambda\backslash\{\alpha_i\},{\boldsymbol\ell}}^{\bl}(\alpha_i)^{-1}
\cdot \mathfrak M_{P_{\Omega,{\bf r}}}(\alpha_i)^{-1}\cdot
(c_i P_{\Omega,{\bf r}}^{\diamondsuit})^{\bl}(\alpha_i), & \mbox{if}& c_i\neq 0,\\
0,& \mbox{if}& c_i=0.\end{array}\right.\label{5.18}
\end{equation}
{\rm (2)} If $\beta_j\in \Omega_0=\Omega\backslash\left[\Lambda\right]$, then the polynomial
$f_{{\bf r},j}=P_{\Lambda,{\boldsymbol\ell}}\gamma_jP_{\Omega\backslash\{\beta_j\},{\bf r}}$
satisfies $f_{{\bf r},j}^{\br}(\beta_j)=d_j$ if and only if
\begin{equation}
\gamma_j=\left\{\begin{array}{ccc} (P_{\Lambda,{\boldsymbol\ell}}^{\diamondsuit}d_j)^{\br}(\beta_j)\cdot
\mathfrak M_{P_{\Lambda,{\boldsymbol\ell}}}(\beta_j)^{-1}\cdot P_{\Omega\backslash\{\beta_j\},{\bf r}}^{\br}(\beta_j)^{-1},&\mbox{if}& d_j\neq 0,\\
0,&\mbox{if}& d_j=0.\end{array}\right.
\label{5.14}
\end{equation}
\label{L:7.2}
\end{lemma}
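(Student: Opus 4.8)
The plan is to recognize each of the two conditions $f_{{\boldsymbol\ell},i}^{\bl}(\alpha_i)=c_i$ and $f_{{\bf r},j}^{\br}(\beta_j)=d_j$ as the assertion that one $\lambda$-transform takes a prescribed value, and then to invert that transform by Lemma \ref{L:7.1}; since part (2) is the exact left--right mirror of part (1), I describe the latter. First I would write $f_{{\boldsymbol\ell},i}=P_{\Lambda\backslash\{\alpha_i\},\boldsymbol\ell}\cdot(\rho_iP_{\Omega,{\bf r}})$, put $a:=P_{\Lambda\backslash\{\alpha_i\},\boldsymbol\ell}^{\bl}(\alpha_i)$ (nonzero since $\Lambda$ is left $P$-independent, Remark \ref{R:1.1r}), and use the left product rule \eqref{2.3a} to absorb $a$ into the coefficient block:
\[
f_{{\boldsymbol\ell},i}^{\bl}(\alpha_i)=\big(a\cdot(\rho_iP_{\Omega,{\bf r}})\big)^{\bl}(\alpha_i)=\big((a\rho_i)\cdot P_{\Omega,{\bf r}}\big)^{\bl}(\alpha_i).
\]
The right-hand side is exactly the value at $\delta=a\rho_i$ of the left $\lambda_{P_{\Omega,{\bf r}},\alpha_i}$-transform $\delta\mapsto(\delta P_{\Omega,{\bf r}})^{\bl}(\alpha_i)$, so Lemma \ref{L:7.1} is the natural tool, provided its hypotheses hold. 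Now $P_{\Omega,{\bf r}}$ is bounded, being a divisor of the central polynomial $\prod_{\beta\in\Omega}\cX_{[\beta]}$, and $\mathfrak M_{P_{\Omega,{\bf r}}}(\alpha_i)\neq0$ because $\cZ(\mathfrak M_{P_{\Omega,{\bf r}}})=[\Omega]$ while $\alpha_i\in\Lambda_0=\Lambda\backslash[\Omega]$.

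With the setup in place I would split on whether $c_i=0$. If $c_i\neq0$, then $f_{{\boldsymbol\ell},i}^{\bl}(\alpha_i)=c_i\neq0$ forces $a\rho_i\neq0$ (a $\lambda$-transform maps $0$ to $0$), so \eqref{5.13} applies with $d=c_i$ and $\delta=a\rho_i$ and gives $a\rho_i=\mathfrak M_{P_{\Omega,{\bf r}}}(\alpha_i)^{-1}(c_iP_{\Omega,{\bf r}}^{\diamondsuit})^{\bl}(\alpha_i)$; multiplying on the left by $a^{-1}=P_{\Lambda\backslash\{\alpha_i\},\boldsymbol\ell}^{\bl}(\alpha_i)^{-1}$ gives precisely \eqref{5.18}, and the converse direction of \eqref{5.13} shows that this $\rho_i$ does yield $f_{{\boldsymbol\ell},i}^{\bl}(\alpha_i)=c_i$. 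If $c_i=0$, the choice $\rho_i=0$ obviously works, while if $\rho_i\neq0$ then, by the definition of the left $\lambda$-transform, $f_{{\boldsymbol\ell},i}^{\bl}(\alpha_i)=(a\rho_i)\,P_{\Omega,{\bf r}}^{\bl}\big((a\rho_i)^{-1}\alpha_i(a\rho_i)\big)$ vanishes only if $(a\rho_i)^{-1}\alpha_i(a\rho_i)\in\cZ_{\boldsymbol\ell}(P_{\Omega,{\bf r}})\subseteq[\Omega]$, which is impossible since this element lies in $[\alpha_i]$ and $[\alpha_i]\cap[\Omega]=\varnothing$. This settles part (1).

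Part (2) is the mirror computation: write $f_{{\bf r},j}=(P_{\Lambda,\boldsymbol\ell}\gamma_j)\cdot P_{\Omega\backslash\{\beta_j\},{\bf r}}$, put $b:=P_{\Omega\backslash\{\beta_j\},{\bf r}}^{\br}(\beta_j)\neq0$ (right $P$-independence of $\Omega$), and use the right product rule \eqref{2.3b} to obtain $f_{{\bf r},j}^{\br}(\beta_j)=\big(P_{\Lambda,\boldsymbol\ell}\cdot(\gamma_jb)\big)^{\br}(\beta_j)$, the value at $\delta=\gamma_jb$ of the right $\lambda_{P_{\Lambda,\boldsymbol\ell},\beta_j}$-transform; since $\beta_j\in\Omega_0=\Omega\backslash[\Lambda]$ we have $\mathfrak M_{P_{\Lambda,\boldsymbol\ell}}(\beta_j)\neq0$, so \eqref{5.12} inverts this to $\gamma_jb=(P_{\Lambda,\boldsymbol\ell}^{\diamondsuit}d_j)^{\br}(\beta_j)\,\mathfrak M_{P_{\Lambda,\boldsymbol\ell}}(\beta_j)^{-1}$, and multiplying on the right by $b^{-1}=P_{\Omega\backslash\{\beta_j\},{\bf r}}^{\br}(\beta_j)^{-1}$ gives \eqref{5.14}; the case $d_j=0$ is disposed of as before using $[\beta_j]\cap[\Lambda]=\varnothing$.

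The $\lambda$-transform inversions themselves are routine once Lemma \ref{L:7.1} is in hand, so I expect the only genuine obstacle to be the verification of its hypothesis $\mathfrak M_{P_{\Omega,{\bf r}}}(\alpha_i)\neq0$ (respectively $\mathfrak M_{P_{\Lambda,\boldsymbol\ell}}(\beta_j)\neq0$): one must pin down $\cZ(\mathfrak M_{P_{\Omega,{\bf r}}})=[\Omega]$ — it divides the central $\prod_{\beta\in\Omega}\cX_{[\beta]}$, whence $\cZ(\mathfrak M_{P_{\Omega,{\bf r}}})\subseteq[\Omega]$, and it right-vanishes on $\Omega$ and is central, whence the reverse inclusion — and then read the needed disjointness off the defining relation $\Lambda_0=\Lambda\backslash[\Omega]$ (symmetrically for $\Omega_0$). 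This is the one spot where the argument leans on the bounded-polynomial and $\lambda$-transform apparatus of \cite{lamler2}.
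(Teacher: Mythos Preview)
Your proof is correct and follows essentially the same route as the paper: reduce $f_{\boldsymbol\ell,i}^{\bl}(\alpha_i)$ via \eqref{2.3a} to a left $\lambda$-transform $(\delta\,P_{\Omega,{\bf r}})^{\bl}(\alpha_i)$ with $\delta=P_{\Lambda\backslash\{\alpha_i\},\boldsymbol\ell}^{\bl}(\alpha_i)\rho_i$, invert it by \eqref{5.13}, and mirror for part (2) via \eqref{5.12}. Your write-up is in fact more careful than the paper's on two points: you explicitly verify the hypothesis $\mathfrak M_{P_{\Omega,{\bf r}}}(\alpha_i)\neq 0$ of Lemma~\ref{L:7.1} by pinning down $\cZ(\mathfrak M_{P_{\Omega,{\bf r}}})=[\Omega]$, and in the case $c_i=0$ you supply the ``only if'' direction (showing $\rho_i\neq 0$ is impossible), whereas the paper merely declares the case $c_i=\rho_i=0$ ``obvious.''
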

\begin{proof} The case $c_i=\rho_i=0$ is obvious. 
If $c_i\neq 0$, we apply the equivalence \eqref{5.13} to $h=P_{\Omega,{\bf r}}$, 
$\delta=P_{\Lambda\backslash\{\alpha_i\},{\boldsymbol\ell}}\cdot\rho_i$, $d=c_i$ and $\beta=\alpha_i$ to conclude that 
$c_i=f_{{\boldsymbol\ell},i}^{\bl}(\alpha_i)=( P_{{\Lambda}\backslash\{\alpha_i\},{\boldsymbol\ell}}\cdot \rho_i\cdot
P_{\Omega,{\bf r}})^{\bl}(\alpha_i)$
if and only if 
$$
P_{\Lambda\backslash\{\alpha_i\},{\boldsymbol\ell}}\cdot\rho_i=
\mathfrak M_{P_{\Omega,{\bf r}}}(\alpha_i)^{-1}\cdot (c_iP_{\Omega,{\bf r}}^{\diamondsuit})^{\bl}(\alpha_i).
$$
The latter is equivalent to the top formula in \eqref{5.18}. The proof of part (2) relies on the equivalence \eqref{5.12} and is quite similar.
\end{proof}
\begin{lemma}
Let $\alpha_i$ and $\beta_j$ belong to the conjugacy class $V_s$.
A polynomial $f_{_{V_s}}=P_{\Lambda\backslash \Lambda_s,\boldsymbol\ell}\cdot g_s \cdot P_{\Omega\backslash \Omega_s,{\bf r}}$ satisfies conditions
\begin{equation}
f_{_{V_s}}^{\bl}(\alpha_i)=c_i\quad\mbox{and}\quad f_{_{V_s}}^{\br}(\beta_j)=d_j
\label{5.20uu}
\end{equation}
if and only if $g_s\in\mathbb F[z]$ is subject to
\begin{equation}
g_s^{\bl}(\widetilde{\alpha}_i)=\rho_i\quad\mbox{and}\quad g_s^{\br}(\widetilde{\beta}_j)=\gamma_j
\label{5.20y}
\end{equation}
where $\widetilde{\alpha}_i, \widetilde{\beta}_j\in V_s$ are defined by 
\begin{equation}
\widetilde{\alpha}_i=P_{\Lambda\backslash \Lambda_s,\boldsymbol\ell}^{\bl}(\alpha_i)^{-1}\cdot\alpha_i\cdot 
P_{\Lambda\backslash \Lambda_s,\boldsymbol\ell}^{\bl}(\alpha_i),\quad
\widetilde{\beta}_j=P_{\Omega\backslash \Omega_s,{\bf r}}^{\br}(\beta_j)\cdot\beta_j\cdot 
P_{\Omega\backslash \Omega_s,{\bf r}}^{\br}(\beta_j)^{-1},
\label{5.9u}
\end{equation}
and where the elements $\rho_i$ and $\gamma_j$ are given by (compare with \eqref{5.14} and \eqref{5.18})
\begin{equation}
\rho_i=\left\{\begin{array}{ccc}P_{\Lambda\backslash \Lambda_s,{\boldsymbol\ell}}^{\bl}(\alpha_i)^{-1}
\cdot \mathfrak M_{P_{\Omega\backslash \Omega_s,{\bf r}}}(\alpha_i)^{-1}\cdot 
(c_i P_{\Omega\backslash \Omega_s,{\bf r}}^{\diamondsuit})^{\bl}(\alpha_i ), & \mbox{if}& c_i\neq 0,\\
0,& \mbox{if}& c_i=0,\end{array}\right.
\label{5.22}
\end{equation}
\begin{equation}
\gamma_j=\left\{\begin{array}{ccc}(P_{\Lambda\backslash \Lambda_s,\boldsymbol\ell}^{\diamondsuit}d_j)^{\br}(\beta_j)\cdot
\mathfrak M_{P_{\Lambda\backslash \Lambda_s,\boldsymbol\ell}}(\beta_j)^{-1}\cdot P_{\Omega\backslash \Omega_s,{\bf r}}^{\br}(\beta_j)^{-1},
&\mbox{if}& d_j\neq 0,\\ 0,&\mbox{if}& d_j=0.\end{array}\right.
\label{5.21}
\end{equation}
\label{L:5.6}
\end{lemma}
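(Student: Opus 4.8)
The plan is to reduce the two conditions in \eqref{5.20uu} to the two conditions in \eqref{5.20y} by peeling off the outer factors $P_{\Lambda\backslash\Lambda_s,\boldsymbol\ell}$ and $P_{\Omega\backslash\Omega_s,{\bf r}}$ of $f_{_{V_s}}$ one at a time, via the product formulas \eqref{2.6}, \eqref{2.7}, and then inverting the resulting left and right $\lambda$-transforms through Lemma \ref{L:7.1}.

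First I would record the non-vanishing facts used throughout. Since $\alpha_i\in\Lambda_s$, we have $\Lambda\backslash\Lambda_s\subseteq\Lambda\backslash\{\alpha_i\}$, so $\langle P_{\Lambda\backslash\{\alpha_i\},\boldsymbol\ell}\rangle_{\bf r}\subseteq\langle P_{\Lambda\backslash\Lambda_s,\boldsymbol\ell}\rangle_{\bf r}$ and $P_{\Lambda\backslash\Lambda_s,\boldsymbol\ell}$ is a left factor of $P_{\Lambda\backslash\{\alpha_i\},\boldsymbol\ell}$; by \eqref{2.6} and Remark \ref{R:1.1r} this forces $w_i:=P_{\Lambda\backslash\Lambda_s,\boldsymbol\ell}^{\bl}(\alpha_i)\neq 0$, and symmetrically $v_j:=P_{\Omega\backslash\Omega_s,{\bf r}}^{\br}(\beta_j)\neq 0$; thus $\widetilde{\alpha}_i=w_i^{-1}\alpha_i w_i$ and $\widetilde{\beta}_j=v_j\beta_j v_j^{-1}$ agree with \eqref{5.9u} and lie in $V_s$. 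Since all interpolation nodes are algebraic, $P_{\Lambda\backslash\Lambda_s,\boldsymbol\ell}$ and $P_{\Omega\backslash\Omega_s,{\bf r}}$ are bounded, and $\cZ(\mathfrak M_{P_{\Lambda\backslash\Lambda_s,\boldsymbol\ell}})=[\Lambda\backslash\Lambda_s]$, $\cZ(\mathfrak M_{P_{\Omega\backslash\Omega_s,{\bf r}}})=[\Omega\backslash\Omega_s]$ (immediate from \eqref{2.4u} and the definition of $\mathfrak M$); as $V_s$ is a conjugacy class disjoint from $[\Lambda\backslash\Lambda_s]$ and from $[\Omega\backslash\Omega_s]$, none of $\mathfrak M_{P_{\Omega\backslash\Omega_s,{\bf r}}}$, $P_{\Omega\backslash\Omega_s,{\bf r}}$, $P_{\Omega\backslash\Omega_s,{\bf r}}^{\diamondsuit}$ (divisors of $\mathfrak M_{P_{\Omega\backslash\Omega_s,{\bf r}}}$) has a left zero in $V_s$, and likewise for the $\Lambda$-side polynomials on the right.

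For the left condition I would apply \eqref{2.6} to $f_{_{V_s}}=P_{\Lambda\backslash\Lambda_s,\boldsymbol\ell}\cdot\big(g_s\,P_{\Omega\backslash\Omega_s,{\bf r}}\big)$ at $\alpha_i$; since $w_i\neq 0$,
\[
f_{_{V_s}}^{\bl}(\alpha_i)=w_i\cdot\big(g_s\,P_{\Omega\backslash\Omega_s,{\bf r}}\big)^{\bl}(\widetilde{\alpha}_i),
\]
so $f_{_{V_s}}^{\bl}(\alpha_i)=c_i$ iff $\big(g_s\,P_{\Omega\backslash\Omega_s,{\bf r}}\big)^{\bl}(\widetilde{\alpha}_i)=w_i^{-1}c_i$. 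When $c_i=0$, \eqref{2.6} together with the fact that no element of $V_s$ is a left zero of $P_{\Omega\backslash\Omega_s,{\bf r}}$ forces $g_s^{\bl}(\widetilde{\alpha}_i)=0$, which is the $c_i=0$ case of \eqref{5.22}. When $c_i\neq 0$, necessarily $\delta:=g_s^{\bl}(\widetilde{\alpha}_i)\neq 0$, and by \eqref{2.6} the displayed left-hand side is the left $\lambda_{P_{\Omega\backslash\Omega_s,{\bf r}},\,\widetilde{\alpha}_i}$-transform of $\delta$; inverting it by \eqref{5.13} (legitimate since $\mathfrak M_{P_{\Omega\backslash\Omega_s,{\bf r}}}(\widetilde{\alpha}_i)\neq 0$) yields
\[
g_s^{\bl}(\widetilde{\alpha}_i)=\mathfrak M_{P_{\Omega\backslash\Omega_s,{\bf r}}}(\widetilde{\alpha}_i)^{-1}\cdot\big(w_i^{-1}c_i\,P_{\Omega\backslash\Omega_s,{\bf r}}^{\diamondsuit}\big)^{\bl}(\widetilde{\alpha}_i).
\]
Using that $\mathfrak M_{P_{\Omega\backslash\Omega_s,{\bf r}}}\in Z_{\mathbb F}[z]$ is conjugation-equivariant and that a left constant factor passes through left evaluation after the compensating conjugation by $w_i$, the right-hand side collapses to $w_i^{-1}\mathfrak M_{P_{\Omega\backslash\Omega_s,{\bf r}}}(\alpha_i)^{-1}\big(c_i P_{\Omega\backslash\Omega_s,{\bf r}}^{\diamondsuit}\big)^{\bl}(\alpha_i)$, which is exactly $\rho_i$ in \eqref{5.22}. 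The right-sided equivalence $f_{_{V_s}}^{\br}(\beta_j)=d_j\Leftrightarrow g_s^{\br}(\widetilde{\beta}_j)=\gamma_j$ with $\gamma_j$ as in \eqref{5.21} is obtained symmetrically: peel $P_{\Omega\backslash\Omega_s,{\bf r}}$ and then $g_s$ off from the right via \eqref{2.7}, and invert the resulting right $\lambda_{P_{\Lambda\backslash\Lambda_s,\boldsymbol\ell},\,\widetilde{\beta}_j}$-transform via \eqref{5.12}.

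The step I expect to be the main obstacle is the final simplification: Lemma \ref{L:7.1} naturally produces $g_s^{\bl}(\widetilde{\alpha}_i)$ and $g_s^{\br}(\widetilde{\beta}_j)$ expressed at the \emph{conjugated} nodes $\widetilde{\alpha}_i$, $\widetilde{\beta}_j$, and one must carefully track how the one-sided multipliers $w_i^{\pm1}$, $v_j^{\pm1}$ and the central polynomials $\mathfrak M_\bullet$ behave under those conjugations in order to rewrite everything in terms of $\alpha_i$, $\beta_j$ as displayed in \eqref{5.22}, \eqref{5.21}. All the non-vanishing hypotheses needed to invoke \eqref{2.6}, \eqref{2.7} and Lemma \ref{L:7.1} reduce to the single disjointness statement $V_s\cap[\Lambda\backslash\Lambda_s]=V_s\cap[\Omega\backslash\Omega_s]=\varnothing$ together with the left/right $P$-independence of $\Lambda$ and $\Omega$.
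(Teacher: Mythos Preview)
Your proposal is correct and uses the same ingredients as the paper's proof: the product formulas \eqref{2.6}, \eqref{2.7} and the inversion of $\lambda$-transforms in Lemma~\ref{L:7.1}. The organization differs slightly. You peel off $P_{\Lambda\backslash\Lambda_s,\boldsymbol\ell}$ first, land at the conjugated node $\widetilde{\alpha}_i$, apply \eqref{5.13} there, and then have to translate the resulting expression back from $\widetilde{\alpha}_i$ to $\alpha_i$ --- the step you flag as the main obstacle. The paper instead first shows (using \eqref{2.3b} and \eqref{2.7}) that
\[
f_{_{V_s}}^{\br}(\beta_j)=\big(P_{\Lambda\backslash\Lambda_s,\boldsymbol\ell}\cdot g_s^{\br}(\widetilde{\beta}_j)\cdot P_{\Omega\backslash\Omega_s,{\bf r}}\big)^{\br}(\beta_j),\qquad
f_{_{V_s}}^{\bl}(\alpha_i)=\big(P_{\Lambda\backslash\Lambda_s,\boldsymbol\ell}\cdot g_s^{\bl}(\widetilde{\alpha}_i)\cdot P_{\Omega\backslash\Omega_s,{\bf r}}\big)^{\bl}(\alpha_i),
\]
and separately checks via Lemma~\ref{L:7.1} that plugging in $\gamma_j$ (resp.\ $\rho_i$) for $g_s^{\br}(\widetilde{\beta}_j)$ (resp.\ $g_s^{\bl}(\widetilde{\alpha}_i)$) returns $d_j$ (resp.\ $c_i$). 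Comparing these two displays (the map $x\mapsto (P\cdot x\cdot P')^{\br}(\beta_j)$ being a bijection by Lemma~\ref{L:7.1} and the nonvanishing of $P_{\Omega\backslash\Omega_s,{\bf r}}^{\br}(\beta_j)$) gives the equivalence directly at the original nodes $\alpha_i,\beta_j$, so the conjugation bookkeeping you anticipate largely disappears. Either route is fine; the paper's ordering just sidesteps the simplification you identified as delicate.
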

\begin{proof}
Since the polynomials $P_{\Lambda\backslash \Lambda_s,\boldsymbol\ell}$ and $P_{\Omega\backslash \Omega_s,{\bf r}}$
have no zeros in $V_s$, their values at $\alpha_i,\beta_j\in V_i$ are not zeros, and the formulas \eqref{5.9u}, \eqref{5.22},
\eqref{5.21} make sense. We next verify that $d_j$ and $c_i$ are recovered from \eqref{5.21} and \eqref{5.22} by
\begin{equation}
d_j=\left(P_{{\Lambda}\backslash \Lambda_s,{\boldsymbol \ell}}\cdot\gamma_j\cdot P_{\Omega\backslash \Lambda_s,{\bf r}}\right)^{\br}(\beta_j),
\qquad c_i=\left(P_{\Lambda\backslash \Lambda_s,{\boldsymbol \ell}}\cdot\rho_i\cdot P_{\Omega\backslash \Lambda_s,{\bf
r}}\right)^{\bl}(\alpha_i).
\label{5.29}
\end{equation}
The trivial cases where $d_j=0$ and $c_i=0$ are clear. If $d_j\neq 0$, we have from \eqref{5.21},
$$
\gamma_j\cdot P_{\Omega\backslash \Omega_s,{\bf r}}^{\br}(\beta_j)=
(P_{\Lambda\backslash \Lambda_s,\boldsymbol\ell}^{\diamondsuit}d_j)^{\br}(\beta_j)\cdot
\mathfrak M_{P_{\Lambda\backslash \Lambda_s,\boldsymbol\ell}}(\beta_j)^{-1}
$$
and by implication $\; \Leftarrow\; $ in \eqref{5.12} and formula \eqref{2.7} we conclude
$$
d_j=\big(P_{\Lambda\backslash \Lambda_s,{\boldsymbol \ell}}\cdot\gamma_j\cdot P_{\Omega\backslash \Omega_s,{\bf r}}^{\br}(\beta_j)\big)^{\br}(\beta_j)\\
=\left(P_{\Lambda\backslash \Lambda_s,{\boldsymbol \ell}}\cdot\gamma_j\cdot P_{\Omega\backslash \Omega_s,{\bf r}}\right)^{\br}(\beta_j),
$$
which confirms the first equality in \eqref{5.29}. The second equality for $c_s\neq 0$ is verified in much the same way. On the other hand, for 
$f_{_{V_s}}$ defined as in \eqref{ma56}, we have, by the formulas \eqref{2.3b} and \eqref{2.7} and by the definition \eqref{5.9u},
of $\widetilde{\beta}_j$, 
\begin{align}
f_{_{V_s}}^{\br}(\beta_j)&=\big(P_{\Lambda\backslash \Lambda_s,\boldsymbol\ell}\cdot g_s \cdot 
P_{\Omega\backslash \Omega_s,{\bf r}}\big)^{\br}(\beta_j)\notag\\
&=\big(P_{\Lambda\backslash \Lambda_s,\boldsymbol\ell}\cdot (g_s \cdot P_{\Omega\backslash \Omega_s,{\bf r}})^{\br}(\beta_j)\big)^{\br}(\beta_j)\notag\\
&=\big(P_{\Lambda\backslash \Lambda_s,\boldsymbol\ell}\cdot g_s^{\br}(\widetilde{\beta}_t)\cdot P_{\Omega\backslash \Omega_s,{\bf r}}^{\br}(\beta_j)
\big)^{\br}(\beta_j)\notag\\
&=\big(P_{\Lambda\backslash \Lambda_s,\boldsymbol\ell}\cdot g_s^{\br}(\widetilde{\beta}_s)\cdot
P_{\Omega\backslash \Omega_s,{\bf r}}\big)^{\br}(\beta_j),\label{4.80}
\end{align}
and quite similarly,
\begin{equation}
f_{_{V_s}}^{\bl}(\alpha_i)=\big(P_{\Lambda\backslash \Lambda_s,\boldsymbol\ell}\cdot g_s \cdot 
P_{\Omega\backslash \Omega_s,{\bf r}}\big)^{\bl}(\alpha_i)=\big(P_{\Lambda\backslash \Lambda_s,\boldsymbol\ell}\cdot g_s^{\bl}(\widetilde{\alpha}_i)\cdot
P_{\Omega\backslash \Omega_s,{\bf r}}\big)^{\bl}(\alpha_i).\label{4.81}
\end{equation}
Comparing \eqref{4.80}, \eqref{4.81} with equalities \eqref{5.29} we conclude that $f_{_{V_s}}$ of the form \eqref{ma56} satisfies 
\eqref{5.20uu} if and only $g_s$ is subject to conditions \eqref{5.20y}.
\end{proof}
\begin{remark}
{\rm By Theorem \ref{T:4.1}, the existence of a polynomial $f_{_{V_s}}$ satisfying conditions \eqref{5.20uu} is equivalent to the equality
$$
\big(L_{\alpha_i} \mathcal X_{_{V_s}} d_j\big)^{\br}(\beta_j)=\big(L_{\alpha_i} \mathcal X_{_{V_s}} c_i\big)^{\br}(\beta_j),
$$
while the existence of a polynomial $g_s$ satisfying conditions \eqref{5.20y} is equivalent to 
$$
\big(L_{\widetilde{\alpha}_i} \mathcal X_{_{V_s}} \gamma_j\big)^{\br}(\widetilde{\beta}_j)=\big(L_{\widetilde{\alpha}_i} 
\mathcal X_{_{V_s}} \rho_i\big)^{\br}(\widetilde{\beta}_j).
$$
By Lemma \ref{L:5.6}, we conclude that the two latter equalities are equivalent.} 
\label{R;last}
\end{remark}
Lemma \ref{L:5.6} clarifies the choice of $g_s$ in the formula \eqref{ma54}. We consider {\em all} interpolation conditions in the original problem
\eqref{1.18}, \eqref{1.19} within the conjugacy class $V_s$ and then take $g_s$ to be any solution of the associated problem \eqref{5.20y}
(with equally many interpolation conditions within the same conjugacy class). Parametrization of all such $g_s$ can be obtained via general formula \eqref{brap1}
as explained in Section 3.2. Substituting these parametrizations for all $s=1,\ldots,m$ into \eqref{ma54} one can get a slightly more structured generalized 
Lagrange interpolation formula.

\bibliographystyle{amsplain}

\begin{thebibliography}{10}

\bibitem{berg}
R.~F.~Berghout, {\em On the equation $ax-xb=c$}, J. Austral. Math. Soc. {\bf 8} (1968), 383-–384.


\bibitem{cohn2}
P.~M.~Cohn, {\em The range of derivations on a skew field and the equation $ax-xb=c$}.
J. Indian Math. Soc. {\bf 37} (1973), 61-–69.


\bibitem{cohn3}
P.~M.~Cohn, {\em Skew fields. Theory of general division rings}, Encyclopedia of Mathematics and its Applications {\bf 57},
Cambridge University Press, Cambridge, 1995.

\bibitem{gm}
B.~Gordon and T.~S.~Motzkin. {\em On the zeros of polynomials over division rings},
Trans. Amer. Math. Soc., {\bf 116} (1965) 218--226, 


\bibitem{jacob1}
N.~Jacobson, {\em The equation $x'\equiv xd-dx=b$}, Bull. Amer. Math. Soc. {\bf 50} (1944), 902–-905.

\bibitem{johnson}
R.~E.~Johnson, {\em On the equation $\chi\alpha=\gamma\chi +\beta$ over an algebraic division ring},
Bull. Amer. Math. Soc. {\bf 50} (1944), 202-–207.


\bibitem{lam1}
T.~Y.~Lam, {\em A general theory of Vandermonde matrices}, Exposition. Math. {\bf 4} (1986), no. 3, 
193--215.

\bibitem{lamler1} 
T.~Y.~Lam and A.~Leroy, {\em Algebraic conjugacy classes and skew polynomial rings}, in {\em Perspectives in ring theory}, pp. 153-–203, 
NATO Adv. Sci. Inst. Ser. C Math. Phys. Sci., {\bf 233}, Kluwer Acad. Publ., Dordrecht, 1988.

\bibitem{lamler13}
T.~Y.~Lam and A.~Leroy, {\em Hilbert 90 Theorems for division rings}, Trans. Amer. Math. Soc. {\bf 345} (1994) 595-–622.

\bibitem{lamler2}
T.~Y.~Lam and A.~Leroy, {\em Wedderburn polynomials over division rings. I}, J. Pure Appl. Algebra {\bf 186} (2004), no. 1, 43-–76. 

\bibitem{stein2}
O.~Steinfeld, {\em Quasi-ideals in rings and semigroups}, 
Hung. Math. Investigations, {\bf 10}10. Akad\'emiai Kiad\'o, Budapest, 1978.

\bibitem{tait}
P.~G.~Tait, {\em An elementary treatise on quaternions},  Oxford, Clarendon Press, 1867.


\end{thebibliography}

\end{document}